\numberwithin{equation}{section}
\numberwithin{equation}{section}
\newtheorem{maintheorem}{Theorem}
\newtheorem{theorem}{Theorem}[section]
\newtheorem{lemma}[theorem]{Lemma}
\newtheorem{proposition}[theorem]{Proposition}
\newtheorem{corollary}[theorem]{Corollary}
\newtheorem{fact}[theorem]{Fact}
\newtheorem{definition}[theorem]{Definition}
\newtheorem{remark}[theorem]{Remark}
\newtheorem{conjecture}[theorem]{Conjecture}
\newtheorem{question}[theorem]{Question}
\def\E{\mathop{\mathbb E}}
\newcommand{\ac}{\mathrm{AC}}
\newcommand{\as}{\mathrm{a.s.}}
\newcommand{\iid}{\mathrm{i.i.d.}}
\newcommand{\con}{\mathbf{Con}}
\newcommand{\la}{\lambda}
\newcommand{\lac}{\lambda_{\mathrm{c}}}
\newcommand{\fc}{\mathrm{FC}}
\newcommand{\brw}{\mathrm{LBRW}(\mu_{\la},o)}
\renewcommand{\Pr}{ \mathrm P}
\newcommand{\gl}{\lambda}
\newcommand{\Pois}{\mathrm{Pois}}
\newcommand{\Pl}{\mathrm{P}_{\lambda}}
\newcommand{\ZZ}{\mathbb{Z}_{+} \cup \{\infty\} }
\newcommand{\T}{{\mathbb T}}
\newcommand{\TT}{\mathcal{T}}
\newcommand{\RR}{\mathcal{R}}
\newcommand{\AAA}{\mathcal{A}}
\newcommand{\BB}{\mathcal{B}}
\newcommand{\DD}{\mathcal{D}}
\newcommand{\F}{{\cal F}}
\newcommand{\C}{{\cal C}}
\newcommand{\w}{{\mathbf w}}
\newcommand{\deq}{\stackrel{d}{=}}
\newcommand{\simt}{\stackrel{t}{\sim}}
\newcommand{\siminfty}{\stackrel{\infty}{\sim}}
\newcommand{\meett}{\stackrel{t}{\leftrightarrow}}
\newcommand{\meetinfty}{\stackrel{\infty}{\leftrightarrow}}
\newcommand{\sfrac}[2]{\mbox{\small $\frac{#1}{#2}$}}
\newcommand{\ssfrac}[2]{\mbox{\footnotesize $\frac{#1}{#2}$}}
\newcommand{\half}{\ssfrac{1}{2}}
\newcommand{\N}{\mathbb N}
\newcommand{\R}{\mathbb R}
\newcommand{\Z}{\mathbb Z}
\newcommand{\W}{\mathcal W}
\DeclareMathSymbol{\leqslant}{\mathalpha}{AMSa}{"36} 
\DeclareMathSymbol{\geqslant}{\mathalpha}{AMSa}{"3E} 
\DeclareMathSymbol{\eset}{\mathalpha}{AMSb}{"3F}     
\renewcommand{\le}{\;\leqslant\;}                   
\renewcommand{\ge}{\;\geqslant\;}                   
\let\phi=\varphi
\renewcommand{\epsilon}{\varepsilon}
\newcommand{\Ind}[1]{\mathbf{1}\{#1\}}
\begin{document}

\title{The social network model on infinite graphs}
\author{Jonathan Hermon
\thanks{
University of Cambridge, Cambridge, UK. E-mail: {\tt jonathan.hermon@statslab.cam.ac.uk}. Financial support by
the EPSRC grant EP/L018896/1.}
\ Ben Morris
\thanks{
Department of Mathematics, UC Davis, USA. E-mail: {\tt morris@math.ucdavis.edu}.}
\
Chuan Qin
\and
Allan Sly 
\thanks{
Department of Mathematics, Princeton University, USA. E-mail: {\tt asly@math.princeton.edu}.}}


\date{}
\maketitle

\begin{abstract}
Given an infinite connected regular graph $G=(V,E)$,  place at each vertex Poisson($\la$) walkers performing independent lazy simple random walks on $G$ simultaneously. When two walkers visit the same vertex at the same time they are declared to be
acquainted. We show that when $G$ is vertex-transitive and amenable, for all $\la>0$ $\as$ any pair of walkers will eventually have a path of acquaintances between them. In contrast, we show that when $G$ is non-amenable (not necessarily transitive) there is always a phase transition at some $\lac(G)>0$. We give general bounds on $\lac(G)$ and study the case that $G$ is the $d$-regular tree in more detail. Finally, we show that in the non-amenable setup, for every $\la$ there exists a finite time $t_{\la}(G)$ such that $\as$ there exists an infinite set of walkers having a path of acquaintances between them by time $t_{\la}(G)$. 
\end{abstract}

\paragraph*{\bf Keywords:}Social network, percolation, random walks, infinite cluster, amenability, phase transition.
{\small 
}

\newpage

\tableofcontents

\newpage


\section{Introduction}
We consider the following model for a social network which we call the \emph{social
network model}  (SN as a shorthand). The model was proposed by Itai Benjamini and was first investigated in \cite{BHK} in the context of finite graphs (see \S\ref{s: related} for further details). In this work we study the model on infinite graphs. Let $G=(V,E)$
be an infinite connected $d$-regular graph,
which is the underlying graph of the SN model. 
In our model we have walkers performing independent lazy simple random
walks on $G$, denoted by \emph{LSRW} (see \S\ref{sec: preliminaries} for a definition).
The walkers perform their LSRWs simultaneously (\textit{i.e.}, at each time unit they
all perform one step, which may be a lazy step). The SN model on a graph $G$ with density $\lambda>0$ is defined by setting  $(|\W_v|)_{v \in V}$ to be i.i.d.~$\mathrm{Pois}(\lambda)$ r.v.'s, where $\W_v$ denotes the set of walkers whose initial position is $v$ (and $\mathrm{Pois}(\lambda)$ is the Poisson distribution of mean $\la$). We denote the corresponding probability measure by $\Pr_{\lambda}$.

Let $t \in \Z_+ \cup \{\infty \}$. We say that two
walkers $w,w'$ have \emph{met by time} $t$, which we denote by $w \meett w' $, if
there exists $t_{0} \le t$ such that they have the same position at time
$t_{0}$. After two walkers meet they
continue their walks independently without coalescing. We write $w \meetinfty w'$ (``meeting by time $\infty$"), if there exists some finite $t$, such that $w \meett w'$. 
 ``Meeting
by time $t$" is a symmetric relation and thus induces a unique minimal equivalence
relation that contains it. We call this equivalence relation \emph{having
a path of acquaintances
by time} $t$ and denote it by $\simt$ (note that $w \siminfty w' $ iff there exists some finite $t$ such that $w \simt w' $). More explicitly, two walkers $a$ and $b$ have a path of acquaintances
by time $t$ iff   there exist $k\in \N$, and walkers $a=c_{0},c_{1},\ldots,c_{k},c_{k+1}=b$ such that $c_{i} \meett c_{i+1} $,
 for all $0 \le i \le k$. Note that we are not requiring the sequence of times in which the walkers met to be non-decreasing, which is the main difference between the SN model and some existing models for spread of rumor/infection (\textit{e.g.}, the $A+B \mapsto 2B$ model \cite{kesten2005spread} and the frog model \cite{telcs1999branching,
alves2002shape,popov2001frogs,hoffman2014frog}). Consequently, the SN model typically evolves much faster than such models. 

We are interested
in the coalescence process of the equivalence classes, and in particular in the number of equivalence classes of $\siminfty$ and in the existence of an infinite equivalence class of $\simt$ for some finite $t$.           

Let $\W:=\cup_{u \in V}\W_u $ be the set of all walkers.
Denote by $\mathbf{Con}$ (a shorthand for ``connected") the event that $w \siminfty w' $ for all $w,w' \in \W$ (\textit{i.e.}, $\mathbf{Con}$ is the event that every pair of walkers eventually have a path of acquaintances between them). 
The following question was proposed to us by Itai Benjamini \cite{benjaminipc}.
\begin{question}
Let $\mathbb{T}_d$ be the infinite $d$-regular tree. Does  $\Pr_{\lambda}[\mathbf{Con}]=1$
 for all $\lambda>0$? \end{question}
We give a negative answer to this question (Theorem \ref{thm: darytree}).
This raises the problem of identifying for which graphs $\con$ occurs $\Pr_{\lambda}$-$\as$ for all $\lambda>0$.
\begin{definition}[Critical density]
\label{def: lac}
Let $G=(V,E)$ be an infinite connected regular graph. The \emph{critical density} for the SN model on $G$ is defined to be 
$$\lac(G):=\inf\{\lambda: \exists \, p>0 \text{ such that } \inf_{u,v \in V} \Pr_{\la}[u \siminfty v \mid \W_u \neq \eset, \W_v \neq \eset ] \ge p \},$$
where for a pair of vertices $u,v$ and $t \in \Z_+ \cup \{\infty \}$ we write $ u \simt v $ (respectively, $u \meett v $) if there exist some $w \in \W_v$ and $w' \in \W_u$ such that $w \simt w'$ (respectively, $w \meett w'$).
\end{definition}
The following phase transition occurs around the critical density: 
\begin{proposition}
\label{prop: criticalintro}
Let $G$ be an infinite connected regular graph. Then
\begin{equation}
\label{eq: lacintro}
\Pr_{\lambda}[\con]=\begin{cases}0 & \text{if  }0 < \lambda<
\lac(G), \\
1 & \text{if  }\lambda >
\lac(G).
\end{cases}
\end{equation}
\end{proposition}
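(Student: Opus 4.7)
The proof combines an elementary pair-bound with a $0$--$1$ law $\Pr_\la[\con]\in\{0,1\}$. Set $p_\la:=\inf_{u,v\in V}\Pr_\la[u\siminfty v\mid\W_u,\W_v\neq\eset]$, so that $\lac=\inf\{\la>0:p_\la>0\}$. For any vertices $u,v$ the inclusion $\con\cap\{\W_u,\W_v\neq\eset\}\subset\{u\siminfty v\}$ yields, after taking probabilities and then the infimum in $u,v$,
\[
\Pr_\la[\con] \;\le\; p_\la(1-e^{-\la})^2 + 1 - (1-e^{-\la})^2.
\]
When $\la<\lac$ one has $p_\la=0$, so the right-hand side is strictly less than $1$; combined with the $0$--$1$ law this forces $\Pr_\la[\con]=0$. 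Hence the case $\la<\lac$ reduces entirely to the $0$--$1$ law.

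For $\la>\lac$ I need $\Pr_\la[\con]=1$. I first check monotonicity of $p_\la$ in $\la$ via the standard coupling of $\Pr_{\la_1}$ and $\Pr_{\la_2}$ as the superposition of a $\la_1$-process and an independent $\Pois(\la_2-\la_1)$ layer: extra walkers can only create new $\siminfty$-connections, and a Palm-measure bookkeeping handles the conditioning on $\W_u,\W_v\neq\eset$ (positivity of $p_\la$ is equivalent to positivity of the Palm-type quantity obtained by adding two independent extra walkers at $u$ and $v$, which is manifestly monotone). Thus $p_\la>0$ for all $\la>\lac$. It remains to lift $p_\la>0$ to $\Pr_\la[\con]>0$, after which the $0$--$1$ law closes the case. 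This lift I would attempt via sprinkling: pass from $\la$ to $\la+\delta$ and use the fresh Poisson walkers as a ``bridging'' layer that with positive probability merges any two distinct infinite $\siminfty$-clusters into one.

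\textbf{The $0$--$1$ law and main obstacles.} Because $G$ is only assumed connected and regular (in particular, not vertex-transitive), shift-ergodicity is unavailable. I would instead rely on the product-independence structure of the Poisson starting positions together with insertion/deletion tolerance along an exhaustion $V_n\uparrow V$: argue that an auxiliary event of the form ``there is a unique infinite $\siminfty$-cluster containing all but finitely many walkers'' is tail-measurable, apply Kolmogorov's $0$--$1$ law, and then use the positive probability $e^{-\la}$ of empty Poisson occupancy to ``absorb'' the orphan walkers and transfer the dichotomy to $\con$ itself. The two principal obstacles are (i) the tail-measurability of this auxiliary event, which is delicate because removing a single walker can in principle shatter an infinite $\siminfty$-cluster into infinitely many pieces, and (ii) the sprinkling step in the $\la>\lac$ direction, essentially a uniqueness-of-infinite-cluster statement for a percolation-type random graph whose edges are strongly correlated through the shared random-walk trajectories---without amenability or transitivity, no direct Burton--Keane argument is available.
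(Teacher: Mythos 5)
There is a genuine gap: both pillars of your argument are precisely the statements you leave unproved, and they are the hard part. First, the zero--one law $\Pr_\la[\con]\in\{0,1\}$ is not available off the shelf here. The event $\con$ is not a tail event with respect to the independent vertex data $(\W_v)_{v\in V}$ together with the walks: deleting the walkers at a single vertex can destroy acquaintance paths, so Kolmogorov's $0$--$1$ law does not apply directly, and your auxiliary event (``a unique infinite $\siminfty$-cluster containing all but finitely many walkers'') is exactly where you concede the tail-measurability is unclear. Since $G$ is only assumed connected and regular, no transitivity is available, so the ergodicity route (which the paper itself only establishes for vertex-transitive $G$, Proposition \ref{prop: ergodicity}) is closed as well. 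Second, in the supercritical direction your lift from $p_\la>0$ to $\Pr_\la[\con]=1$ rests on a sprinkling/uniqueness-of-infinite-cluster step that you acknowledge you cannot carry out; note that the regime where the proposition has content ($\lac>0$) is the non-amenable one, exactly where Burton--Keane-type uniqueness arguments are unavailable. So the proposal reduces the proposition to two unresolved claims rather than proving it.

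The paper avoids both issues and never uses a $0$--$1$ law. For $\la<\lac$ it picks pairs $(u_n,v_n)$ with $\Pr_\la[\fc(u_n)=\fc(v_n),\,u_n,v_n\in\Xi]\le 2^{-n}$ and applies both halves of Borel--Cantelli: a.s.\ only finitely many of these connection events occur while infinitely many pairs are occupied, so a.s.\ some occupied pair is disconnected and $\Pr_\la[\con]=0$ outright. For $\la>\lac$ it fixes a pair $u,v$, conditions on $u,v\in\Xi$, and runs repeated ``trials'': the definition of $\lac$ plus monotonicity give a first attempt (finite time, finite set of walkers) succeeding with probability at least $p/2$; if it fails, the Regeneration Lemma (Lemma \ref{lem: regen}) shows the walkers not yet used still stochastically dominate an i.i.d.\ Poisson field of density exceeding $\lac$, so a fresh attempt with success probability at least $p/2$ can be made, uniformly in the exposed past. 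Hence $\Pr_\la[\fc(u)=\fc(v)\mid u,v\in\Xi]=1$ for every pair, and intersecting over the countably many pairs gives $\Pr_\la[\con]=1$. Your pair-bound and the monotonicity bookkeeping are fine, but to repair the proposal you would need to replace the $0$--$1$ law and the sprinkling step by arguments of this regeneration type (or prove those two claims, which the paper does not attempt and which appear substantially harder than the proposition itself).
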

A graph $G=(V,E)$ is called \emph{vertex transitive} if the action of its automorphisms group, $\mathrm{Aut}(G)$, on its vertices is transitive (\textit{i.e.}, $\{\phi(v):\phi \in \mathrm{Aut}(G) \}=V $ for all $v$). The spectral radius of a random walk on $G=(V,E)$ with transition kernel $P$ is $\rho:=\limsup_{n}(P^n(v,u))^{1/n} $ (the limit is independent of $u,v \in V$). A graph   $G$ is called \emph{amenable} if $\rho=1$ for LSRW on $G$ (otherwise, it is called \emph{non-amenable}). We review some consequences of amenability/non-amenability in \S\ref{sec: gap} and \S\ref{sec: amenable}. 

There are numerous characterizations of amenability. Most characterizations describe a certain dichotomy between amenable graphs and non-amenable graphs. In particular, several probabilistic models exhibit very different behaviors in the amenable setup and the non-amenable setup. However, proving a sharp dichotomy may be an extremely challenging task for some models. For instance, it is a major open problem in percolation theory to establish that for vertex transitive graphs, the existence of a non-uniqueness regime for Bernoulli percolation is equivalent to non-amenability. For
further details see \cite[Chapter 7]{lyonsbook}. For a different recent characterization of non-amenability via percolation see \cite{Tom}.
The following theorem asserts that for transitive graphs, amenability can be characterized by the SN model (note that there is no transitivity assumption in the non-amenable setup).
\begin{maintheorem}
\label{thm: amenabilityintro}
For every infinite connected vertex transitive amenable graph of finite degree, $\lac=0$. Conversely, for every infinite non-amenable connected regular graph $\lac>0$.
\end{maintheorem}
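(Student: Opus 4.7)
The plan is to prove the two halves by independent methods, using a subcritical branching-process domination in the non-amenable case and a F\o lner/Burton--Keane argument in the amenable transitive case.

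\emph{Non-amenable direction.} Suppose $G$ is a regular non-amenable graph with LSRW spectral radius $\rho<1$. The goal is to show that for all sufficiently small $\lambda>0$, the $\siminfty$-class $C_o$ of a walker starting at $o$ is almost surely finite; this forces $\lac(G)>0$, since a uniform lower bound $p>0$ on $\Pr_\lambda[u\siminfty v\mid\W_u,\W_v\neq\eset]$ summed over infinitely many $v$ would give $\E|C_o|=\infty$ on $\{\W_o\neq\eset\}$. To bound $|C_o|$ I would expose the cluster in a breadth-first manner: reveal a walker $w\in\W_o$ and its full LSRW trajectory $(X_t^w)$; add to the queue every other walker whose trajectory ever meets $(X_t^w)$ in spacetime; recurse. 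By the Poisson structure of $\W$, the number of walkers met by $w$ is, conditional on $(X_t^w)$, a Poisson random variable with mean $\lambda\cdot M_w$, where $M_w=\sum_{x\in V}\Pr_x[\text{an independent LSRW ever meets }(X_t^w)]$. The spectral bound $p_t(u,v)\le\rho^t$, the vanishing of $p_t(u,v)$ for $t<d(u,v)$, and the $d$-regularity of $G$ combine (via a refined heat-kernel estimate or a comparison with the outward drift of the distance between two independent walks) to give $M_w\le M_G<\infty$ uniformly in the trajectory of $w$. Iterating, $|C_o|$ is stochastically dominated by the total progeny of a Galton--Watson tree with $\mathrm{Pois}(\lambda M_G)$ offspring, which is subcritical for $\lambda<1/M_G$.

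\emph{Amenable direction.} Let $G$ be vertex-transitive amenable and fix $\lambda>0$. The plan is to show $\Pr_\lambda[\con]=1$, which by Proposition~\ref{prop: criticalintro} gives $\lac(G)=0$. There are three steps. (i) \emph{An infinite cluster exists.} Fix a F\o lner exhaustion $(F_n)$ and a time horizon $T_n$ short enough that each of the $\mathrm{Pois}(\lambda|F_n|)$ walkers starting in $F_n$ remains in a slight enlargement $F_n^+$ (with $|F_n^+|/|F_n|\to1$) with high probability. A second-moment argument counts the distinct spacetime meeting pairs inside $F_n\times[0,T_n]$: the first moment is of order $\lambda^2|F_n|T_n$ and the second moment is controlled by heat-kernel bounds on $F_n^+$, so the number of distinct meeting pairs is at least of order $|F_n|$ and the largest $\siminfty$-cluster inside $F_n$ diverges with $n$, producing an infinite cluster almost surely. (ii) \emph{Uniqueness of the infinite cluster.} The model is $\mathrm{Aut}(G)$-invariant and insertion-tolerant, because adding a walker at any site is absolutely continuous with respect to the Poisson law; a Burton--Keane trifurcation count, valid on any amenable transitive graph, then rules out more than one infinite cluster. (iii) \emph{Every walker lies in the infinite cluster.} $\mathrm{Aut}(G)$-ergodicity makes the density of walkers in the unique infinite cluster a deterministic constant $q$, and insertion-tolerance combined with translation-invariance forces $q=1$, so almost surely every walker lies in the unique infinite cluster and $\con$ holds.

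\emph{Main obstacle.} The principal technical difficulty is step (i) of the amenable direction. In transient amenable graphs such as $\Z^d$ with $d\ge3$, two independent LSRWs meet only finitely often almost surely, so a single pair alone cannot sustain percolation and the Poisson density of walkers together with the F\o lner property must be exploited. The delicate point is controlling the correlations among meeting pairs that share a common walker, since these inflate the second moment of the meeting count and must be bounded using heat-kernel estimates on $F_n^+$.
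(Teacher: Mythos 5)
Both halves of your plan have genuine gaps. In the non-amenable direction, the key quantity $M_G$ is not finite: on \emph{any} infinite $d$-regular graph (amenable or not), the expected number of walkers whose trajectory meets a fixed spacetime trajectory $(\mathbf{w}_i)_{i\le t}$ grows at least like $c(d)\la\sqrt{t}$, because $\la t=\mathbb{E}[\sum_{i\le t}|\W_{\mathbf{w}_i}(i)|]$ while $\sup_{x,y}P^j(x,y)\le C(d)/\sqrt{j+1}$; consequently a given walker a.s.\ meets infinitely many other walkers (this is the paper's Lemma \ref{lem: infcollide}). So your uniform bound $M_w\le M_G<\infty$ fails, the $\siminfty$-class $C_o$ is a.s.\ \emph{infinite} for every $\la>0$, and a subcritical Galton--Watson domination of $|C_o|$ cannot exist. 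Moreover finiteness of clusters is not what $\lac>0$ requires (nor is ``$\E|C_o|=\infty$'' in tension with $\lac>0$). The paper's route is different: it dominates the exploration of $\fc(o)$ by a lazy branching random walk in which every particle gives birth to $2\,\Pois(\la)$ offspring \emph{at every step} (forward particles plus time-reversed clones), and then counts expected visits to $o$: by $P^{n}(o,v)\le\rho^{n}$ the expected number of $n$-th generation particles at $o$ is at most $[(1+2\la)\rho]^n$, summable when $\la<\frac12(\rho^{-1}-1)$, whereas on $\con\cap\{o\in\Xi\}$ the walkers of $\fc(o)$ visit $o$ infinitely often. The spectral radius enters through return probabilities of a supercritical-in-size but spatially spreading process, not through a finite total-offspring bound.

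In the amenable direction, your step (ii) asserts insertion tolerance of the acquaintance graph ``because adding a walker at any site is absolutely continuous with respect to the Poisson law.'' This does not give insertion tolerance: inserting a walker at $u$ (or at $u$ and $v$) typically adds an uncontrolled collection of edges, not the single edge $\{u,v\}$, and insertion tolerance demands that for any positive-probability event $\mathcal{B}$ the event obtained by opening \emph{exactly} the edge $e$ in addition has positive probability. The paper flags precisely this obstruction and resolves it by an auxiliary construction: the walkers are split into density-$\la/2$ families, one family is further partitioned into directed reservoirs $\mathcal{W}(v,u)$ whose members can only create the edge $\{u,v\}$ by meeting at time $1$; by Poisson thinning these extra edges are independent across edges, so the auxiliary graph (which is dominated by $\ac$) is insertion tolerant and translation invariant, and the Gandolfi--Keane--Newman-type uniqueness theorem applies to it. Your steps (i) and (iii) are also misdirected: no second-moment/F\o lner argument is needed for existence, since by Lemma \ref{lem: infcollide} \emph{every} walker a.s.\ meets infinitely many walkers, so every occupied vertex already lies in an infinite cluster; and ``insertion tolerance forces density $q=1$'' is not a valid implication (infinite clusters in insertion-tolerant models generally have density less than one). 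What is actually needed, and what the paper proves, is that uniqueness of the infinite cluster in each auxiliary graph, together with the fact that every occupied vertex lies in an infinite cluster, forces all occupied vertices into one cluster, giving $\Pr_{\la}[\con]=1$ for all $\la>0$ and hence $\lac=0$.
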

\begin{remark}
A similar dichotomy is believed to hold for the frog model (in the context of recurrence), however the only family of non-amenable graphs for which a phase transition is known to exist in the frog model is the infinite $d$-regular tree for all $d \ge 3$ \cite{hoffman}. The frog model in the amenable setup is studied in  \cite{teixeira2013random}. \end{remark}
\begin{remark}
Using our analysis of the non-amenable setup it is not hard to verify that by attaching the root of an infinite binary tree to the origin of $\Z^d$ we obtain a non-transitive amenable graph with $\lac>0$. Thus the transitivity assumption is necessary in Theorem \ref{thm: amenabilityintro}.
\end{remark}

A question which arises naturally is what can be said about $\lac$ in the non-amenable setup.   We give general lower and upper bounds on $\lac(G)$ (Theorems \ref{thm: easylower} and \ref{thm: mainupper}, respectively) in terms of the spectral radius  $\rho$  of the walk and the degree $d$ of the underlying graph. It turns out that the holding probability (which obviously affects $\rho$) can drastically change $\lac$, which is somewhat counter-intuitive at first sight. As an illustrating example we consider the infinite $d$-regular tree.

\begin{maintheorem}
\label{thm: darytree}
Let $\T_d=(V,E)$ be the infinite $d$-regular tree for some $d \ge 3$. There exist absolute constants $c,C>0$ such that when the holding probability of the walks is  taken to be $1/(d+1)$ we   have that\begin{equation}
\label{eq: lacTd}
c \sqrt{d} \le \lac (\T_d) \le C \sqrt{d}.
\end{equation}
\end{maintheorem}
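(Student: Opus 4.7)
The proof specializes the general bounds on $\lac(G)$ from Theorems \ref{thm: easylower} and \ref{thm: mainupper} to the case $G = \mathbb{T}_d$. The only new computation is the spectral radius of the LSRW with the prescribed holding probability.

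With holding probability $1/(d+1)$, the transition matrix decomposes as $P = \frac{1}{d+1}(I + A)$, where $A$ is the adjacency matrix of $\mathbb{T}_d$. By Kesten's theorem, the operator norm of $A$ on $\ell^2(\mathbb{T}_d)$ equals $2\sqrt{d-1}$, so the spectral radius of $P$ (which equals its operator norm by self-adjointness) is
\[
\rho \;=\; \frac{1 + 2\sqrt{d-1}}{d+1}.
\]
In particular $\rho = \Theta(d^{-1/2})$ and $d\rho = \Theta(\sqrt{d})$ as $d \to \infty$.

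Next, I substitute into the general bounds. Theorems \ref{thm: easylower} and \ref{thm: mainupper} provide estimates on $\lac(G)$ in terms of $d$ and $\rho$; specialized to $\mathbb{T}_d$, these scale like $d\rho$ and yield $c\sqrt{d} \le \lac(\mathbb{T}_d) \le C\sqrt{d}$. Heuristically, $d\rho$ is the right order because for $\lambda \gtrsim d\rho$ the local meeting density is sufficient for short-range meetings to percolate, while for $\lambda \lesssim d\rho$ the decay of meeting probabilities (governed by $\rho$) overwhelms the tree's exponential volume growth $(d-1)^k$.

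The main subtlety is confirming that the two general bounds, proved for arbitrary non-amenable regular graphs, both reduce to the same scale $d\rho$ on $\mathbb{T}_d$, with no extraneous logarithmic or polynomial factors in $(1-\rho)^{-1}$ or in $d$. Given this, the tree case follows by substitution. The noteworthy feature of Theorem \ref{thm: darytree} is the conclusion rather than the proof: the specific holding probability $1/(d+1)$ produces $\rho = \Theta(1/\sqrt{d})$, whereas a standard $1/2$-lazy walk on $\mathbb{T}_d$ has $\rho \to 1/2$ and hence a much larger critical density of order $d$, illustrating the paper's remark that $\lac$ is counterintuitively sensitive to the holding probability.
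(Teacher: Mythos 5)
Your lower bound is fine and matches the paper: Theorem \ref{thm: easylower} gives $\lac(\T_d)\ge \tfrac12(\rho^{-1}-1)$, and with holding probability $1/(d+1)$ one has $\rho=\frac{1+2\sqrt{d-1}}{d+1}$ by Kesten's theorem together with \eqref{e:rhop}, so $\tfrac12(\rho^{-1}-1)=\Theta(\sqrt d)$. The gap is in the upper bound. Theorem \ref{thm: mainupper} does \emph{not} ``scale like $d\rho$'': for holding probability $1/(d+1)$ it gives $\lac\le\bigl(d+1+\frac{2}{1-\rho}\bigr)\log 8$, and since $\rho=\Theta(d^{-1/2})$ the term $\frac{2}{1-\rho}$ is $O(1)$ while the additive $d+1$ dominates, so substitution yields only $\lac(\T_d)\le Cd$, a factor $\sqrt d$ too weak. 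The paper invokes Theorem \ref{thm: mainupper} in \S\ref{s:tree} only to dispose of small fixed $d$; the whole content of that section is a separate, tree-specific argument for the upper bound. Roughly: root the tree at $u$, split each vertex's children into ``left'' and ``right'' halves, and for each vertex $a$ in the right-subtree $\RR$ consider the walkers started in the disjoint left-subtrees $\TT_{a,L}$ that hit $a$ for the first time at time $t$ and then cross a prescribed edge; by Poisson thinning and disjointness of the $\TT_{a,L}$ these counts are independent Poisson variables with mean $\alpha_t\ge c\la/(d+1)$. Declaring an edge ``forward good'' and ``backwards good'' according to these events, a path along which every edge is good in both directions produces a walker reaching $u$ (resp.\ $v$) at time $2t$ that already lies in $\fc_{2t}(u)$ (resp.\ $\fc_{2t}(v)$); the comparison is then with Bernoulli bond percolation on an (roughly) $(d/2)$-ary tree. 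Because each edge carries \emph{two} independent requirements, one needs the per-requirement probability $p\approx 1-e^{-c\la/d}$ to exceed $\sqrt{p_c}\approx\sqrt{2/d}$, which is exactly where $\la\ge C\sqrt d$ enters; a $\delta$-excellence argument plus Borel--Cantelli then gives infinitely many simultaneous good times for $u$ and $v$, hence $\fc(u)=\fc(v)$ with probability bounded below. None of this is obtainable by ``substituting into'' the general theorems, so your proposal is missing the main construction.

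A secondary error confirms the misreading of the general bounds: you assert that holding probability $1/2$ on $\T_d$ yields a ``much larger critical density of order $d$.'' In fact Theorem \ref{thm: mainupper} gives $\lac\le 20\log d/(1-\rho_{1/2})=\Theta(\log d)$ in that case (and the paper notes a matching $c\log d$ lower bound), i.e.\ the $1/2$-lazy walk has a much \emph{smaller} critical density than the $1/(d+1)$-lazy one. This is precisely the counterintuitive sensitivity to the holding probability the theorem is meant to illustrate, and it shows the heuristic ``$\lac\asymp d\rho$'' is not what either general theorem provides.
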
 
In contrast, Theorem \ref{thm: mainupper} asserts that when the holding probability is taken to be $1/2$, there exists an absolute constant $C$ such that for all $d \ge 3$ and all infinite connected $d$-regular graphs $G$ we have that $\lac (G) \le C \log d $.   In \S\ref{s:refined} we state and provide a sketch of proof of Theorem \ref{thm: hardlower}, which refines Theorem \ref{thm: easylower} and asserts the following. There exists an absolute constant  $c>0$ such that  for every connected,  infinite, regular graph $G$, when the holding probability is $1/2$ we have that $\lac(G) \ge c \log (1/\rho) $, where $\rho
$ is the spectral-radius of simple random walk on $G$ (rather than of lazy simple random walk with holding probability 1/2). 

Combining these two results  we obtain as a corollary that $c \log d \le \lac (\T_d) \le C \log d  $, when the holding probability is $1/2$. In fact, the same bounds hold for all infinite connected Ramanujan graph, which are by definition $d$-regular graphs with $\rho=\frac{2\sqrt{d-1}}{d}$. (For SRW on infinite connected $d$-regular graphs one always has that  $\rho \ge \frac{2\sqrt{d-1}}{d} $  and for $\T_d$ this is an equality -- see \textit{e.g.}, \cite[Theorem 6.10]{lyonsbook}.)

We strongly believe that up to the value of the absolute constants, the same bounds hold for the continuous-time analog of the SN model, as the ones holding in discrete-time when the holding probability is $1/2$.

\subsection{Infinite friend clusters in finite time}

We now turn our attention to the problem of determining the existence of an infinite equivalence class of $\simt$ for some finite $t$. 

Let  $t \in \Z_{+} \cup \{\infty\}$. For each walker $w$ we call the set walkers in the same equivalence class of $ \simt  $ as $w$, the \emph{friend cluster} of $w$ \emph{at time} $t$ and denote it by $\mathrm{FC}_{t}(w)$. When $t=\infty$ we call this set the \emph{friend cluster}
of $w$ and denote it by $\mathrm{FC}(w):=\mathrm{FC}_{\infty}(w)$. More generally, when $t=\infty$ we often omit it from our terminology and notation. Recall that for $u,v \in V$ and $t \in \Z_{+}\cup \{\infty \} $ we denote $u \meett v$ and $u\simt v$ iff there exist $w \in \W_u$ and $w' \in \W_v$ so that  $w \meett w'$ and $w \simt w'$, respectively. Let \[\Xi:=\{u \in V: |\W_u|>0 \} \] be the set of initially occupied vertices. It will be convenient to define the friend cluster of a vertex $u$ at time $t$, which by abuse of notation we denote by $\mathrm{FC}_{t}(u)$, which is defined as follows. If $u \in \Xi$ then we define $\fc_t(u)$ to be the friend cluster of the walkers in $\W_u$ at time $t$, \textit{i.e.}, $\mathrm{FC}_{t}(u):=\mathrm{FC}_{t}(w)$ for some (and hence every) $w \in \W_u $. Otherwise, we set $\mathrm{FC}_{t}(u) $ to be the empty-set. Note that
\begin{equation}
\label{eq: deflac}
\lac(G)=\inf\{\lambda: \exists \, p>0 \text{ such that } \forall \, u,v \in V,\quad \Pr_{\la}[\fc(u)=\fc(v) \mid u,v \in \Xi ] \ge p \}.
\end{equation}
Minor adjustments to the analysis of the frog model on $(\Z/n\Z)^d $ from \cite{frogs} show that  when the underlying graph is $\Z^d$ with $d>1$, for every $\la>0$ there is indeed an infinite friend cluster in finite time $\as$ 

\begin{conjecture}[Benjamini \cite{benjaminipc}]
\label{conj: infinitecluster}
Let $G$ be an infinite connected graph of bounded degree. Assume that for some $0<p<1$ Bernoulli bond percolation on $G$ with survival probability $p$ has an infinite connected component with probability 1. Then for all $\lambda>0$, there exists $t_{\lambda}(G)>0$ such that
$$ \forall \, t > t_{\lambda}(G) , \quad \Pr_{\lambda}\left[\max_{w} \mathrm{|FC}_{t}(w)|=\infty  \right]=1.$$
\end{conjecture}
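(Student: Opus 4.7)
The plan is a renormalization argument that reduces the claim to supercritical Bernoulli bond percolation on $G$ via the Liggett--Schonmann--Stacey (LSS) stochastic domination theorem. By hypothesis there is $p_0<1$ such that $p_0$-Bernoulli bond percolation on $G$ a.s.\ has an infinite cluster, and LSS supplies an integer $K$ and a $\delta>0$ so that any $K$-dependent $\{0,1\}^{E(G)}$-valued random field with marginals at least $1-\delta$ stochastically dominates $p_0$-Bernoulli bond percolation and hence a.s.\ contains an infinite open cluster. The whole proof reduces to exhibiting, for some finite $t=t_{\la}(G)$, such a field of local events coming from the SN model.

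For each edge $e=\{u,v\}\in E$ and parameters $L,t$ to be chosen later, let $\tilde B_e:=B(u,L)\cup B(v,L)$ and $\tilde B_e^{+}:=B(u,L+t)\cup B(v,L+t)$, and let $A_e$ be the event that there exist a walker $w\in \W_u$, a walker $w'\in \W_v$ and a chain of meetings by time $t$ linking $w$ to $w'$ that uses only walkers whose starting positions lie in $\tilde B_e$ and whose trajectories on $[0,t]$ remain inside $\tilde B_e^{+}$. Then $A_e$ depends only on the walkers originating in $\tilde B_e$ together with their paths (stopped upon exiting $\tilde B_e^{+}$); since $G$ has bounded degree, $A_e$ and $A_{e'}$ are independent whenever $d_G(e,e')>2L$, which yields $K$-dependence for an appropriate $K=K(L)$. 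Moreover, chaining is immediate: if $A_{e_1},\dots,A_{e_n}$ all occur along a self-avoiding edge-path $e_1,\dots,e_n$ in $G$, then transitivity of $\simt$ places all endpoints of the path in the same friend cluster at time $t$, so an infinite open cluster in the $(A_e)$-percolation forces $\max_w |\fc_t(w)|=\infty$.

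The main obstacle is the uniform bound $\inf_e \Pr_{\la}[A_e]\ge 1-\delta$. Heuristically, as $L\to\infty$ Poisson concentration supplies order $\la|B(u,L)|$ walkers in $\tilde B_e$; by choosing $t$ comparable to a local relaxation time of LSRW inside $\tilde B_e^{+}$, those walkers generate many pairwise meetings, and a second-moment (birthday-type) argument on the number of meetings among walkers initially close to $u$ or $v$ should force all such walkers into a single local friend cluster at time $t$ with probability $\ge 1-\delta$. Making this quantitative in the generality of the conjecture is the real difficulty, since the hypothesis $p_{c}^{\mathrm{bond}}(G)<1$ alone does not furnish uniform mixing or heat-kernel estimates on balls. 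A natural route is (i) a graph-agnostic lower bound on the probability that two LSRWs started inside $\tilde B_e$ meet before leaving $\tilde B_e^{+}$, using only the degree bound and a simple return-time estimate; (ii) a second-moment argument over pairs of walkers in $\tilde B_e$ to upgrade ``positive meeting probability'' to ``high probability of local connectivity'' between neighborhoods of $u$ and $v$; and (iii) absorbing the remaining bad events (walkers exiting $\tilde B_e^{+}$, atypical Poisson counts, scarcity of walkers near $u$ or $v$) into the $\delta$-budget. For $G=\Z^d$ with $d\ge 2$ all three ingredients are implicit in the frog-model analysis of \cite{frogs}, which is why the conjecture is verifiable there; implementing them under the sole hypothesis $p_{c}^{\mathrm{bond}}(G)<1$ is the heart of the problem. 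Once the bound is in hand, LSS, the chaining remark, and monotonicity of $\fc_t$ in $t$ give the conclusion with $t_{\la}(G):=t$.
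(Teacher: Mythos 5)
The statement you were asked to prove is Conjecture \ref{conj: infinitecluster}; the paper does not prove it. It only records partial progress: Theorem \ref{thm: infcluster} treats non-amenable $G$ by a global exploration-process argument (a variant of Benjamini--Nachmias--Peres run on $s$-walks, with no locality constraint on where the recruited walkers live), and the text remarks that the case $G=\Z^d$, $d>1$, follows from the frog-model analysis of \cite{frogs}. Your proposal does not prove it either: you reduce the conjecture, via Liggett--Schonmann--Stacey, to the uniform estimate $\inf_e\Pr_{\la}[A_e]\ge 1-\delta$ and then explicitly leave that estimate as a heuristic. That estimate is essentially the whole content of the problem, so what you have is a plausible renormalization scheme, not a proof.

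Moreover, the scheme as written fails outright for small $\la$: your event $A_e$ demands a walker $w\in\W_u$ and a walker $w'\in\W_v$, hence $\Pr_{\la}[A_e]\le(1-e^{-\la})^2$ no matter how $L$ and $t$ are chosen, which is bounded away from $1$ and in particular below any LSS threshold $1-\delta$ once $\la$ is small; the ``order $\la|B(u,L)|$ walkers in $\tilde B_e$'' do not help because the event is pinned to the two specific vertices $u,v$. (Also, the requirement that trajectories stay in $\tilde B_e^{+}=B(u,L+t)\cup B(v,L+t)$ is vacuous for $t$-step walks started in $\tilde B_e$, so all the locality comes from restricting starting points.) One can repair the definition by using mesoscopic seeds, e.g.\ asking that some walker started in $B(u,L_0)$ be linked to some walker started in $B(v,L_0)$ through walkers from $\tilde B_e$, with chaining on a coarse-grained lattice; but then the required bound is a uniform near-$1$ local connectivity estimate, and nothing of that kind is known to follow from the sole hypotheses of bounded degree and $p_c^{\mathrm{bond}}(G)<1$, which give no uniform heat-kernel, mixing, or meeting estimates on balls. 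Tellingly, in the non-amenable case --- the one case where the paper does prove the conclusion --- the argument is deliberately non-local, because local linking probabilities are exactly the kind of quantity tied to $\lac(G)>0$ (cf.\ Theorems \ref{thm: easylower} and \ref{thm: darytree}), and no argument is given (here or in the paper) that they approach $1$ as $L,t\to\infty$. So the missing step is not a technicality to be ``absorbed into the $\delta$-budget''; it is the conjecture itself.
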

The following theorem provides a partial answer.
\begin{maintheorem}
\label{thm: infcluster}
Let $G=(V,E)$ be a regular connected infinite non-amenable graph. Denote the spectral radius of LSRW with some arbitrary holding probability $0 \le p<1$ by $\rho$. Let $\mathrm{IC}(t) $ be the event that $\max_{v \in V}|\fc_t(v)|=\infty$.  Then there exists an absolute constant $C>0$ (independent also of $G$) such that for all $\la \in (0,1]$ and $t \ge \lceil \frac{C}{\la(1-\rho)} \rceil=:t_{C,\la} $
$$\Pl[\mathrm{IC}(t  )]=1. $$ 
\end{maintheorem}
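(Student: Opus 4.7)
The plan is a three-step scheme: reduce to positive probability of an infinite friend cluster via Kolmogorov's $0$-$1$ law, prove a spectral-radius lower bound on the expected number of meetings of a single walker in time $t$, and then bootstrap that to an infinite cluster via a branching-process exploration of $\fc_t(w_0)$. The scaling $t\asymp C/(\la(1-\rho))$ in the statement is exactly what is needed to make the expected number of meetings of a single walker (and hence the branching offspring mean) a large constant.

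First, I would observe that adding or removing a single walker $w$ cannot change $\mathrm{IC}(t)$: if $w$ lies in an infinite friend cluster $\mathcal{C}$, then $\mathcal{C}\setminus\{w\}$ decomposes in the friend graph $H$ into at most $\deg_H(w)<\infty$ connected components, and since $|\mathcal{C}|=\infty$ at least one of them is still infinite. Hence $\mathrm{IC}(t)$ is a tail event for the independent family $\{(\W_u,\text{trajectories of walkers in }\W_u)\}_{u\in V}$, and by Kolmogorov's $0$-$1$ law $\Pr_\la[\mathrm{IC}(t)]\in\{0,1\}$; it suffices to show $\Pr_\la[\mathrm{IC}(t)]>0$.

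Next, I would prove the one-walker meeting estimate. Condition on $\W_{v_0}\ne\emptyset$ for some fixed $v_0$, pick $w\in\W_{v_0}$ with trajectory $X=(X_s)_{s\le t}$, and let $A(w)$ be the other walkers meeting $w$ in $[0,t]$. By Poisson thinning, $|A(w)|$ is conditionally on $X$ Poisson with mean $\mu(X)=\la\sum_u q_u(X)$, where $q_u(X)=\Pr[\exists s\le t: Y^u_s=X_s\mid X]$ for $Y^u$ an independent LSRW from $u$. Setting $M^u=\#\{s\le t: Y^u_s=X_s\}$, the identity $\sum_u P^s(u,X_s)=1$ (from regularity and reversibility) gives $\sum_u\E[M^u\mid X]=t+1$. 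The strong Markov property of $Y^u$ at its first meeting with $X$, combined with $P^k(x,y)\le\rho^k$ (a consequence of self-adjointness of $P$ on $\ell^2(V)$), yields $\E[M^u\mid X,\, Y^u\text{ meets }X]\le\sum_{k\ge 0}\rho^k=(1-\rho)^{-1}$ uniformly in $X$. Dividing gives $\sum_u q_u(X)\ge(1-\rho)(t+1)$, so $\mu(X)\ge\la(1-\rho)t\ge C$ whenever $t\ge\lceil C/(\la(1-\rho))\rceil$.

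Finally, I would explore $\fc_t(w)$ in breadth-first fashion, exposing walkers one at a time. The strong Markov property at each discovery time $\tau^i$ ensures that the post-$\tau^i$ portion of the $i$th walker's trajectory is a fresh LSRW, so Step~2 applies to this residual trajectory of length $t-\tau^i$; Poisson thinning on the starting vertices not yet exposed then gives that the number of new offspring is conditionally Poisson with mean at least $\mu(X^i\upharpoonright[\tau^i,t])-\sum_{v\in F}q_v(X^i)$, where $F$ is the currently exposed vertex set. The main obstacle is controlling this $F$-correction over the whole exploration: the crude bound $q_v\le 1$ decreases the mean by $|F|$ per step and only guarantees clusters of size $O(C)$, not infinite ones. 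To obtain an infinite cluster one must show that typical residual trajectories arising deep in the exploration are ``ballistic'' (their range drifts away from $F$ at speed $\Theta(1-\rho)$, by the non-amenability), so that $\sum_{v\in F}q_v(X^i)=o(|F|)$ and the offspring mean remains $\ge 2$ throughout. With this in hand the exploration stochastically dominates a supercritical Galton--Watson tree, which survives with positive probability, giving $\Pr_\la[|\fc_t(w)|=\infty]>0$ and, via Step~1, $\Pr_\la[\mathrm{IC}(t)]=1$.
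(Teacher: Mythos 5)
Your Step 1 (Kolmogorov $0$--$1$ law reduction to positive probability) is exactly the paper's first move, and your Step 2 is a reasonable single-walker estimate in the right spirit. The gap is in Step 3, and it is not a technicality: the claim you defer --- that residual trajectories arising deep in the exploration drift away from the exposed set $F$ fast enough that $\sum_{v\in F}q_v(X^i)$ stays negligible --- is both unproven and, as quantified, not even the right requirement. Since the offspring mean of a freshly exposed walker is only a constant ($\mu\ge C$ after time $\asymp C/(\la(1-\rho))$), keeping the exploration supercritical requires the loss term to be bounded by a \emph{constant} (at most $\mu-2$) uniformly over the whole exploration, not merely $o(|F|)$ as $|F|\to\infty$. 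For a walker recruited at a vertex well inside the exposed region this is simply false: its trajectory typically passes near many exposed space-time points and the loss can exceed $C$. Non-amenability gives positive speed of escape from a fixed starting point, but it does not give escape from a growing exposed set that surrounds the walker's starting location, so "ballisticity away from $F$" cannot be extracted from $\rho<1$ in the per-walker form you need. (There is also a secondary issue: in your scheme "not previously discovered" constrains a new walker's trajectory at times both before and after $\tau^i$, so the post-$\tau^i$ piece is not an unconditioned LSRW; this conditioning has to enter the thinning computation.)

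The paper's proof resolves precisely this obstruction by an averaged statement rather than a per-walker one. Passing to $s$-walks with $s\asymp K/(1-\rho)$ makes the spectral radius $\rho^s\le e^{-8K}$ tiny, and Lemma 2.1 of Benjamini--Nachmias--Peres gives, for every finite set $A$, that the return probability to $A$ started from the uniform distribution on $A$ is at most $\rho^s$; hence at least a $(1-e^{-4K})$-fraction of the points of $A$ are "good'' (return probability at most $e^{-4K}$). The exploration is then arranged so that it only branches from good \emph{unchecked} space-time points, and the bookkeeping $|A_\ell|\ge 2\ell$ versus $|\C_\ell|=\ell$ guarantees such a point always exists as long as the process is on track; for a good point, Poisson thinning shows the walkers there that avoid the exposed set at all other $s$-times have mean at least $\la(1-2e^{-4K})$, and each such walker's $s$-walk of length $M\asymp K/\la$ contributes order $M$ new vertices. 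Azuma's inequality then keeps $|A_\ell|\ge2\ell$ forever with positive probability. If you want to complete your outline, you essentially must import this good-fraction lemma and the accompanying checked/unchecked bookkeeping (or some equivalent averaging device); without it, the exploration argument does not close.
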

\begin{remark}
Theorem 6 in \cite{BHK} asserts that (for $\la=1$) if $G$ is a $d$-regular expander of size $n$, then there exists some constants $t,c_1$ (depending only on the spectral gap of the walk on $G$ and on $d$) such that after $t$ steps $\max_v |\fc (v)| \ge n/6 $ with probability at least $1-e^{-c_1n}$. However, the techniques from the finite setup do not carry over  to the infinite setup of Theorem \ref{thm: infcluster}. 
\end{remark}
\subsection{Related work}
\label{s: related}
The SN model,  proposed by Itai Benjamini, was first investigated in the context of finite graphs and $\gl=1$  in \cite{BHK}, where it was shown that   there exist constants $c,C>0$ such that for  every finite connected graph $G=(V,E)$  of average degree $d$, $$  \Pr[c \log |V| \le \inf \{t: \mathrm{FC}_t(u)=\mathrm{FC}_{t}(v) \text{ for all }u,v \in \Xi \} \le Cd^6 \log^{3} |V|
 ] \ge 1-|V|^{-1}. $$
 That is, (when $\la=1$ and the holding probability is taken to be $1/2$) the first time at which all walkers have a path of acquaintances between them is with high probability poly-logarithmic in the number of vertices, provided that the average degree is at most poly-logarithmic. For $d$-regular graphs the term $d^6$ is improved to $d$. Further improvements are given under  appropriate heat-kernel decay assumption or under a certain isoperimetric assumption.  
\subsection{Organization of the paper and discussion of our techniques}
In \S\ref{sec: preliminaries} we present some preliminaries about Poisson thinning, percolation and random walks on non-amenable graphs. In \S\ref{sec:prop1.3} we prove Proposition \ref{prop: criticalintro}. 

In \S\ref{sec: amenable} we prove the assertion of Theorem \ref{thm: amenabilityintro} in the transitive amenable setup (namely, that $\lac=0$). The main tools used in \S\ref{sec: amenable} are borrowed from the study of percolation. Namely, we consider the graph with vertex set $V$ in which all vertices in $V \setminus \Xi $ are isolated and each pair of vertices $u,v \in \Xi$ are connected  if $\fc(u)=\fc(v)$ (\emph{i.e.}, if eventually there is a path of acquaintances between the walkers whose initial location is $u$ and the ones whose initial location is $v$). We wish to show that for every $\la>0$  $\as$ all $u \in \Xi$ lie in the same connected component (this is the same as saying $\Pl[\con]=1$) 

 We show that this percolation process stochastically dominates an auxiliary translation-invariant percolation process possessing insertion tolerance (see \S\ref{s:percolationdefinitions} for the relevant definitions), in which for each $u \in \Xi$ the  connected component of $u$ is infinite. Using standard machinery from the theory of percolation on transitive amenable graphs (see Theorem \ref{thm: longrangeuniqueness}) we deduce that the auxiliary percolation process $\as$ has a unique infinite cluster. The vertex  set of the unique infinite cluster must be $\Xi$, as if some $u \in \Xi$ does not lie in the unique infinite cluster, then there would be more than one infinite cluster (as the cluster of $u$ is infinite, as is the cluster of every $v \in \Xi$). The aforementioned stochastic domination implies that $\Pl[\con]=1 $.

 In \S\ref{s:upperthm1} we bound $\lac$ from above in the non-amenable setup. The idea of the argument is to argue that if $\la$ is sufficiently large, then any two friend clusters have a drift towards each other. Clearly, if $\la$ is large enough (in terms of the degree) this is true in the first step. The idea is to exploit Poisson thinning, and to somehow use just a fraction of the walkers, in a manner that guarantees that at each step we have a sufficient amount of ``unused" walkers to maintain a drift. The key fact used in the analysis is the exponential decay (w.r.t.\ time) of the transition probabilities of the random walk.  

In \S\ref{s:tree} we consider the $d$-ary tree and prove Theorem \ref{thm: darytree}. Here we use a certain comparison between the SN model with parameter $\lambda$ and a Bernoulli bond percolation, with parameter proportional to $(\lambda/d)^2 $, on a certain copy of $\T_{\lceil d/2 \rceil} $ inside $\T_d $.  This percolation is supercritical if $\la > C \sqrt{d} $, which by the nature of the comparison we establish, in turn implies the supercriticality of the SN model.

 In \S\ref{s:thm3} we prove Theorem \ref{thm: infcluster}. Here we use a variant of an exploration process of Benjamini, Nachmias and Peres \cite{benjamini} which they used to prove locality of the critical percolation probability for non-amenable graphs of large girth. Their analysis establishes some connection between percolation and random walks, and hence it is perhaps not surprising that a variant of it is useful also in our setup. 

 In \S\ref{s:lowernonamen} we conclude the proof of Theorem \ref{thm: amenabilityintro} by proving a general lower bound on $\lac$ in the non-amenable setup. Here we explore the friend cluster in a way which we then dominate by a  branching random walk with mean offspring distribution $1+2\la$. Such a branching random walk is known to be transient provided that $1+2\la \le 1/\rho$ \cite{criticalBRW}, where $\rho$ is the spectral-radius of the corresponding walk (see \S\ref{sec: gap} and \S\ref{s:lowernonamen} for definitions). Transience of the branching random walk implies that $\as$ there are some vertices which are never visited by walkers in the friend cluster of the origin. 

If we only considered paths of acquaintances which are monotone in time (as in the aforementioned $A +B \mapsto 2B$ and frog models -- see the discussion at page 3), then as we now explain it would have been relatively easy to dominate the friend cluster via a branching random walk with offspring distribution whose law is the same as that of $1+X$, where $X \sim \Pois(\lambda) $. For this consider the exploration process in which at each time unit $t$ we recruit to the exploration process (the yet unrecruited) walkers that met at time $t$ one of the walkers already recruited to the exploration process before time $t$. Using Poisson thinning it is not hard to argue that each recruited walker contributes at each stage at most $\Pois(\lambda)$ new walkers. 

In  \S\ref{s:lowernonamen} we describe a variant of this exploration process, which actually captures the evolution of $\fc(o)$. Exploring directly the evolution of $\fc_t(o) $ as time increases is counter-productive, as it grows to rapidly and by Theorem \ref{thm: infcluster} it becomes infinite in finite time. Instead we shall explore the evolution of  $\fc(o)$ in a slowed down fashion.    At each stage we reveal two steps of each previously recruited walker, one corresponds to moving forward in time, as above, and the other corresponds to a step backwards in time. Namely, if a walker is recruited to the exploration process at stage $t$, due to an acquaintance made at time $s \le t$, then at stage $t+i$ (for $i >0$) we reveal its location at time $s+i $ (forward step) and if $i \le s$ also its location at time $s-i$ (backwards step).  

 Crucially, using reversibility (and the fact that the transition kernel $P$ of the walk performed by each particle is symmetric, \textit{i.e.}, it satisfies $P(x,y)=P(y,x)$ for all $x,y$), if $(v_0,v_1,\ldots) $ is a random walk, then $(v_t,v_{t-1},\ldots,v_1,v_0)$ is also a random walk. That is, the  backwards evolution of each walker still has the law of a random walk. Thus we may think of each recruited walker as two distinct particles, one corresponding to the forward trajectory, and one to the backwards trajectory (from the time at which the walker was recruited until time 0). This accounts for the multiplicative term $2$ in $1+2\lambda$ above. Namely, we dominate the exploration process via a branching random walk with offspring distribution $1+2X$, where $X \sim \Pois(\la) $. 

Finally, in \S\ref{s:concluding} we give a refinement of the lower bound on $\lac$ from \S\ref{s:lowernonamen} which is specialized to the case where the holding probability is large. It is used to determine the asymptotic behavior of $\lac(\T_d)$ as $d \to \infty$ when the holding probability is $1/2$, as described after the statement of Theorem \ref{thm: darytree}.   
  
\section{Preliminaries and additional notation}
\label{sec: preliminaries}
LSRW is defined as follows.  If a
walker's current position is $v$, then the walker either stays in its current position w.p.\ $1/2$, which we refer to as the \emph{holding probability}, or moves to one of the neighbors of $v$ w.p.~$\frac{1}{2d}$. We shall also consider the case of holding probability $1/(d+1)$ in which $1/(2d)$ and $1/2$ above are both replaced by $1/(d+1)$.
\subsection{Reversibility, Poisson thinning, stationarity of the occupation measure and independence of the number of walkers performing different walks.}
\label{s:revthin}
Let $G=(V,E)$ be a regular graph. Then the transition kernel $P$ of LSRW on $G$ is symmetric (\emph{i.e.}, $P(x,y)=P(y,x)$ for all $x,y \in V $) and so $P^t$ is also symmetric for all $t \in \N$. In other words, $P$ is \emph{reversible} w.r.t.\ the counting measure on $V$. We now establish a certain independence property for walks in $G$, which in particular implies stationarity of the occupation measure for the SN model. 

A \emph{walk} of length $k$ in $G$ is a sequence of $k+1$ vertices $(v_0,v_1,\ldots,v_{k})$ such that for all $0 \le i <k$ either $v_i=v_{i+1}$ or $\{v_{i} , v_{i+1}\} \in E$. Let $\Gamma_k$ be the collection of all walks of length $k$ in $G$. Throughout,
we denote the set of walkers whose initial position is $v$ by $\W_v:=\{w_1^{v},\ldots,w_{N_{v}}^v \}$. We denote by $\w_{i}^v(t)$ the position of the walker $w_{i}^v$ at time $t$. We say that a walker $w_{i}^v$ \emph{performed a walk} $\gamma \in \Gamma_k$ if $(\w_{i}^v(0),\ldots,\w_{i}^v(k)) =\gamma$.  For a walk $\gamma=(\gamma(0),\ldots,\gamma(k)) \in \Gamma_{k}$ for some $k \ge 1$, we denote $p(\gamma):=\prod_{i=0}^{k-1}P(\gamma_i,\gamma_{i+1}) $. This is precisely the probability that some given walker $w \in \W_{\gamma(0)}$ performed the walk $\gamma$. 

Let $\gamma_{\mathrm{rev}} $ be the reversal of $\gamma \in \Gamma_k$. That is $\gamma_{\mathrm{rev}}(i)=\gamma(k-i) $ for all $0 \le i \le k $. Then by reversibility $p(\gamma)=p(\gamma_{\mathrm{rev}}) $.  We denote
the number of walkers whose position at time $t$ is $v$ by $Y_{v}(t)$. 
By reversibility, for all $v \in V $ and $t>0$ we have  $\mathbb{E}_{\lambda}[Y_v(t)]=  \sum_{u
\in V} \mathbb{E}_{\lambda}[Y_{v}(0)]P^{t}(u,v)= \la \sum_{u
\in V} P^{t}(v,u)= \la $. Thus by Poisson thinning:
\begin{fact}
\label{fact: thinning}
Let $G=(V,E)$ be a regular graph. Denote the number of walkers who performed a walk $\gamma$ (in the above sense) by $X_{\gamma}$. For every $\lambda>0$, under $\Pr_{\la}$ we have that $X_{\gamma} \sim \Pois(\lambda p(\gamma)) $, for all $t>0$ and $\gamma \in \Gamma_t$.  Moreover, $(X_{\gamma})_{\gamma \in \Gamma_t} $ are independent for each fixed $t>0$. Consequently,  $(Y_v(t))_{v \in V} $ are $\iid$ $\Pois(\la)$ random variables for each fixed $t>0$.
\end{fact}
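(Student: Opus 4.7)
The plan is to reduce everything to Poisson thinning applied vertex by vertex, with reversibility entering only at the very end to identify the resulting Poisson parameter of $Y_v(t)$.

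First, fix $t>0$ and partition $\Gamma_t$ according to the starting vertex: for each $v\in V$ let $\Gamma_t^v:=\{\gamma\in\Gamma_t:\gamma(0)=v\}$. By construction, each walker in $\W_v$ independently performs a walk from $\Gamma_t^v$, and performs a specific $\gamma\in\Gamma_t^v$ with probability $p(\gamma)$, where $\sum_{\gamma\in\Gamma_t^v}p(\gamma)=1$. Since $|\W_v|\sim\Pois(\lambda)$, the standard Poisson thinning / colouring lemma gives that the counts $X_\gamma^{(v)}:=|\{w\in\W_v: w\text{ performed }\gamma\}|$, indexed by $\gamma\in\Gamma_t^v$, are independent $\Pois(\lambda p(\gamma))$ random variables.

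Now I would use independence across starting vertices: the collections $\{X_\gamma^{(v)}:\gamma\in\Gamma_t^v\}$ are jointly independent over $v\in V$ because the walker-sets $\W_v$ are independent and each walker's trajectory depends only on its own starting vertex and its own independent randomness. Since every $\gamma\in\Gamma_t$ belongs to exactly one $\Gamma_t^v$, we have $X_\gamma=X_\gamma^{(\gamma(0))}$, and thus $(X_\gamma)_{\gamma\in\Gamma_t}$ are independent $\Pois(\lambda p(\gamma))$ random variables. This proves the first two claims of the fact.

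For the final claim, observe that $Y_v(t)=\sum_{\gamma\in\Gamma_t:\,\gamma(t)=v}X_\gamma$, a sum of independent Poissons over a set of walks that is disjoint for distinct endpoints $v$. Hence each $Y_v(t)$ is Poisson with parameter
\[
\lambda\!\!\sum_{\gamma\in\Gamma_t,\,\gamma(t)=v}\!\!p(\gamma)\;=\;\lambda\sum_{u\in V}P^t(u,v),
\]
and different $Y_v(t)$'s are independent because they are functions of disjoint sub-collections of the independent family $(X_\gamma)_{\gamma\in\Gamma_t}$. The only genuine (if tiny) step is identifying the parameter as $\lambda$: in general $\sum_u P^t(u,v)$ need not equal $1$, but reversibility of $P$ with respect to counting measure (a consequence of $d$-regularity and symmetry of $P$) yields $P^t(u,v)=P^t(v,u)$, so $\sum_u P^t(u,v)=\sum_u P^t(v,u)=1$. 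I expect this reversibility step to be the only place one could slip up; everything else is a bookkeeping application of Poisson thinning.
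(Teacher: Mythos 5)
Your argument is correct and is essentially the paper's own: the paper likewise obtains the Fact by applying Poisson thinning to the $\Pois(\lambda)$ walkers at each starting vertex and uses reversibility (symmetry of $P^t$ on a regular graph) only to identify $\sum_u P^t(u,v)=1$, hence $\mathbb{E}_\lambda[Y_v(t)]=\lambda$. Your write-up just makes the vertex-by-vertex thinning, the independence across starting vertices, and the summation over walks ending at $v$ explicit.
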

\subsection{Further notation, monotonicity and the regeneration Lemma}
\label{sec: construction}

Let $t \in \ZZ $. The \emph{acquaintances
graph} at time $t$, denoted by $\mathrm{AC}_{t}(G)=(V,E_{t})$,
is a random graph in which two distinct vertices
$u,v \in V $ are connected by an edge iff $u \meett v $. We denote $\ac(G):=\ac_{\infty}(G)$.
 We denote the
connected component of $v$ in $\mathrm{AC}_{t}(G)$ by $C_{t}(v)$. Note that $\fc_t(v)= \bigcup_{u \in C_{t}(v) }\W_u $, where as before $\W_u$ is the set of walkers which initially occupy vertex $u$. When clear from context, we omit $G$ from the notation. When we want to emphasize the density of the walkers we write $\ac_{t}^{\la}(G)$. We denote the collection of walkers which occupy vertex $v$ (respectively, the set $A \subseteq V $) at time $t$ by $\W_v(t) $ (respectively, $\W_A(t) $) and set $\W_A:= \W_A(0)= \bigcup_{a \in A }\W_a $ (this is the set of walkers whose initial position is in $A$). 

\begin{proposition}
\label{prop: couplingforalllambda}
Let $G=(V,E)$ be a regular graph. There exists a probability space on which the SN model on $G$ is defined for all  $\lambda>0$ simultaneously, such that  deterministically, for all $t \in
\Z_{+} \cup \{\infty \} $ we have that $\ac_{t}^{\lambda_1}$ is a subgraph
of  $\ac_{t}^{\lambda_2}$ for all $\lambda_1 \le \lambda_2$. 
\end{proposition}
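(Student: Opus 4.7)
The plan is to build a single probability space by stacking the walkers of all densities into one Poisson point process, using the standard thinning-style coupling that makes $\W_v^{\lambda_1}\subseteq \W_v^{\lambda_2}$ deterministically whenever $\lambda_1\le \lambda_2$.

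Concretely, I would take an independent family of Poisson point processes $(\Pi_v)_{v\in V}$, where each $\Pi_v$ is a rate-$1$ Poisson process on $(0,\infty)$, together with an independent i.i.d.\ collection $(\xi_{v,i})_{v\in V,\,i\in\N}$ of LSRW trajectories (each distributed as a LSRW on $G$ started from $v$). For a given density $\lambda>0$, declare the set of walkers initially at $v$ to be $\W_v^{\lambda}:=\{(v,s):s\in \Pi_v\cap (0,\lambda]\}$, and let each such walker $(v,s)$ perform the trajectory $\xi_{v,j}$ where $j$ is the rank of $s$ among the points of $\Pi_v$ (so walkers are assigned trajectories in the order they appear). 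By the defining property of the Poisson process, for each fixed $\lambda$ the counts $(|\W_v^{\lambda}|)_{v\in V}$ are i.i.d.\ $\pois(\lambda)$, and conditionally on the counts the trajectories are i.i.d.\ LSRWs; hence the joint law of walker positions at density $\lambda$ matches the law of the SN model with parameter $\lambda$ on $G$.

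The monotonicity is then automatic: whenever $\lambda_1\le \lambda_2$, we have $\Pi_v\cap(0,\lambda_1]\subseteq \Pi_v\cap(0,\lambda_2]$, and each walker present at density $\lambda_1$ is assigned exactly the same trajectory at density $\lambda_2$. Consequently, if two walkers $w,w'$ meet at some time $t_0\le t$ under the $\lambda_1$-process, the same two walkers meet at the same time under the $\lambda_2$-process. Thus every edge of $\ac_t^{\lambda_1}$ is also an edge of $\ac_t^{\lambda_2}$ for every $t\in \Z_+\cup\{\infty\}$, which is the claim.

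There is no real obstacle here; the only technical point worth checking is that this construction can be carried out on a single probability space for \emph{all} $\lambda>0$ simultaneously (rather than for each $\lambda$ separately), and that the resulting coupling is jointly measurable. Since $\lambda$ only appears as a deterministic truncation level of a fixed Poisson point configuration and of a fixed trajectory assignment, both properties are immediate, so the argument amounts to the two paragraphs above.
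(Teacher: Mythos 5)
Your construction is correct and is essentially the paper's own proof: the paper also uses independent rate-$1$ Poisson processes $(M_v)_{v\in V}$ (so $|\W_v^{\lambda}|=M_v(\lambda)$, i.e.\ the points in $(0,\lambda]$) together with an independent infinite family of LSRW trajectories at each vertex, assigned by rank, so that increasing $\lambda$ only adds walkers and preserves all existing trajectories and meetings. No gaps; the monotonicity of $\ac_t^{\lambda}$ follows exactly as you argue.
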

The construction is fairly straightforward and is very similar to the one in \cite{frogs}. We present it in the Appendix \ref{A:construction} for the sake of completeness. 

\begin{lemma}(Regeneration Lemma)
\label{lem: regen}
Let $G=(V,E)$ be an infinite $d$-regular graph.  Let $Y_{v,B}(t)$ be the number of walkers belonging to  $\W_B$ which are at vertex $v$ at time $t$. Then for every finite set $A \subset V $  and each fixed $t$, $(Y_{v,A^{\complement}}(t))_{v \in V}$ are independent Poisson r.v.'s, where $A^{\complement}:=V \setminus A $ is the complement of $A$. Moreover, $\lim_{t \to \infty} \inf_{v} \mathbb{E}_{\la}[Y_{v,A^{\complement}}(t)]=\la $.
\end{lemma}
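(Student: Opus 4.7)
The plan is to leverage the Poisson thinning structure spelled out in Fact~\ref{fact: thinning}. Decomposing each walker's contribution according to the walk $\gamma$ it performed,
$$
Y_{v,A^\complement}(t) \;=\; \sum_{\gamma \in \Gamma_t:\; \gamma(0)\in A^\complement,\; \gamma(t)=v} X_\gamma.
$$
For distinct values of $v$ the index sets on the right-hand side are pairwise disjoint, since each walk has a unique terminal vertex, and by Fact~\ref{fact: thinning} the family $(X_\gamma)_{\gamma\in\Gamma_t}$ consists of mutually independent Poissons. It follows immediately that $(Y_{v,A^\complement}(t))_{v\in V}$ is an independent family, each coordinate being a sum of a disjoint subfamily of independent Poissons and hence Poisson-distributed.

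For the mean statement, I would first compute, using the reversibility $P^t(u,v)=P^t(v,u)$ (valid because $G$ is regular) together with $\sum_{u \in V} P^t(v,u)=1$,
$$
\mathbb{E}_\la[Y_{v,A^\complement}(t)] \;=\; \la \sum_{u\in A^\complement} P^t(u,v) \;=\; \la\Bigl(1 - \sum_{u\in A} P^t(v,u)\Bigr).
$$
Since this is at most $\la$ trivially, the task reduces to showing that $\sup_{v\in V}\sum_{u\in A} P^t(v,u)\to 0$ as $t\to\infty$. Cauchy--Schwarz on $\ell^2(V)$ together with $P^{s}(w,w)\le 1$ yields the standard diagonal bound
$$
P^t(v,u) \;\le\; \sqrt{P^t(v,v)\,P^t(u,u)} \;\le\; \sqrt{P^t(u,u)}
$$
(with the routine parity adjustment for odd $t$ absorbed into the laziness), and therefore
$$
\sup_{v\in V}\sum_{u\in A} P^t(v,u) \;\le\; |A|\cdot \max_{u\in A}\sqrt{P^t(u,u)}.
$$

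The main (and essentially only nontrivial) remaining input is the classical dissipation estimate $P^t(u,u)\to 0$ as $t\to\infty$ for LSRW on an infinite connected $d$-regular graph. Since $P$ is reversible with respect to the counting measure on $V$, which has infinite total mass, the walk cannot be positive recurrent; thus it is either transient or null-recurrent, and $P^t(u,u)\to 0$ in both regimes by standard Markov chain theory. Equivalently, viewing $P$ as a self-adjoint operator on $\ell^2(V)$, the value $1$ is not an $\ell^2$-eigenvalue, so $\langle \delta_u, P^t\delta_u\rangle\to 0$ by the spectral theorem. The finiteness of $A$ is essential in converting this pointwise decay into the uniform decay $|A|\max_{u\in A}\sqrt{P^t(u,u)}\to 0$, which completes the proof that $\inf_v \mathbb{E}_\la[Y_{v,A^\complement}(t)]\to \la$.
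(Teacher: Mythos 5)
Your proposal is correct, and its first half (the Poisson-thinning decomposition of $Y_{v,A^{\complement}}(t)$ over walks of length $t$ with terminal vertex $v$, giving independence and Poisson marginals via Fact \ref{fact: thinning}, followed by the reversibility computation $\la-\mathbb{E}_{\la}[Y_{v,A^{\complement}}(t)]=\la\sum_{u\in A}P^{t}(v,u)$) is exactly the paper's argument. Where you genuinely diverge is in the decay input: the paper quotes the quantitative on-diagonal estimate $\sup_{x}P^{t}(x,x)\le C_{d}/\sqrt{t}$ for lazy walk on an infinite $d$-regular graph, combined with Proposition \ref{p:decayofreturn}, to get the explicit bound $\la-\mathbb{E}_{\la}[Y_{v,A^{\complement}}(t)]\le C_{d}\la|A|/\sqrt{t}$, whereas you use a soft argument: the counting measure is an infinite stationary measure, so the irreducible aperiodic lazy walk cannot be positive recurrent and $P^{t}(u,u)\to 0$, or equivalently the spectral theorem together with the absence of a nonzero constant (hence of any $\ell^{2}$ harmonic function) on an infinite connected graph. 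Your route is more self-contained and more general, needing no heat-kernel estimate, but it yields no rate; the paper's citation gives the $1/\sqrt{t}$ rate, which is in the same spirit as the bound actually invoked in the proof of Lemma \ref{lem: infcollide}, though for the statement of Lemma \ref{lem: regen} the qualitative limit is all that is required, so your argument suffices. Two points to tighten: the inequality $P^{t}(v,u)\le\sqrt{P^{t}(v,v)P^{t}(u,u)}$ is only justified as written for even $t$ (for holding probability $1/(d+1)$ the transition operator need not be positive semidefinite), so the ``parity adjustment'' should be made explicit in the form $P^{2t+1}(v,u)\le\sqrt{P^{2t+2}(v,v)P^{2t}(u,u)}$, exactly as in Proposition \ref{p:decayofreturn}; and in the spectral argument one must also rule out an atom of the spectral measure at $-1$ (immediate here, since for holding probability $p>0$ the spectrum of $P_{p}$ lies in $[2p-1,1]$), as otherwise $\langle\delta_{u},P^{t}\delta_{u}\rangle$ could oscillate rather than converge.
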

\begin{lemma}
\label{lem: infcollide}
Let $G=(V,E)$ be an infinite, connected, regular graph. Let $\mathbf{w}:=(\mathbf{w}_0,\mathbf{w}_1,\ldots)$  $\in V^{\Z_+}$. For $t \in \N \cup \{ \infty \} $ let $N_t(\mathbf{w}) $ be the number of  walkers not belonging to $\W_{\mathbf{w}_0}$ which for some $i \le t $ visited $\mathbf{w}_i$ at time $i$. Then  $N_t(\mathbf{w}) $ has a Poisson distribution  for all $t$ and $\mathbf{w}$ whose mean (under $\Pl$) is at least $c \la \sqrt{t} $ for some constant $c$. In particular, $N_{\infty}(\mathbf{w}) $ is infinite~$\as$      
 \end{lemma}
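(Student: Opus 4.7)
The plan is to combine Poisson thinning (Fact \ref{fact: thinning}) with a vertex-indexed second-moment bound on the hitting probabilities of the moving target $(\mathbf{w}_i)_{i\ge 0}$, closed off by a standard on-diagonal heat-kernel estimate. First I would decompose $N_t(\mathbf{w}) = \sum_{\gamma} X_\gamma$, where the sum runs over walks $\gamma \in \Gamma_t$ with $\gamma(0) \neq \mathbf{w}_0$ that satisfy $\gamma(i) = \mathbf{w}_i$ for some $0 \le i \le t$. Fact \ref{fact: thinning} renders the $X_\gamma$ independent $\mathrm{Pois}(\lambda p(\gamma))$, so $N_t(\mathbf{w}) \sim \mathrm{Pois}(\mu_t)$ with
\[
\mu_t \;=\; \lambda \sum_{u \neq \mathbf{w}_0} \Pr_{u}\!\left[\exists\, 0 \le i \le t :\ X_i = \mathbf{w}_i\right],
\]
where $(X_i)_{i\ge 0}$ denotes LSRW on $G$. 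It thus remains to show $\mu_t \ge c(d)\lambda\sqrt{t}$.

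For each $u$ set $Z_u := \#\{0\le i \le t : X_i = \mathbf{w}_i\}$ under $\Pr_u$. Cauchy-Schwarz (Paley-Zygmund form) yields $\Pr_u[Z_u \ge 1] \ge (\mathbb{E}_u Z_u)^2 / \mathbb{E}_u Z_u^2$, and a second Cauchy-Schwarz across $u$'s, in the form $\sum a_u^2/b_u \ge (\sum a_u)^2/\sum b_u$, gives
\[
\sum_{u} \Pr_u[Z_u \ge 1] \;\ge\; \frac{\bigl(\sum_u \mathbb{E}_u Z_u\bigr)^2}{\sum_u \mathbb{E}_u Z_u^2}.
\]
Regularity of $G$ makes $P$ doubly stochastic, so $\sum_u P^i(u,\mathbf{w}_i) = 1$ for every $i$, hence the numerator equals $(t+1)^2$. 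Expanding $Z_u^2$, applying the Markov property at the first of the two hitting times, and summing in $u$ by the same doubly stochastic identity collapses the starting-point dependence and produces
\[
\sum_{u} \mathbb{E}_u Z_u^2 \;=\; (t+1) + 2\sum_{0 \le i < j \le t} P^{\,j-i}(\mathbf{w}_i,\mathbf{w}_j).
\]

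The final ingredient is the standard on-diagonal bound $P^k(x,y) \le C(d)/\sqrt{k}$, valid for LSRW on any infinite $d$-regular graph (a consequence of the Nash/isoperimetric inequality together with the at-least-linear volume growth of any infinite bounded-degree graph). This controls the double sum by $C(d)\,t \sum_{k=1}^{t} k^{-1/2} \le C'(d)\,t^{3/2}$, so $\sum_u \Pr_u[Z_u \ge 1] \ge c(d)\sqrt{t}$, and subtracting the single term $u = \mathbf{w}_0$ (at most $1$) yields $\mu_t \ge c(d)\lambda\sqrt{t}$ for $t$ large, and the bound is trivial for small $t$. The ``in particular'' statement follows because $N_\infty(\mathbf{w})$ is the monotone limit of $\mathrm{Pois}(\mu_t)$ with $\mu_t\to\infty$, hence a.s.\ infinite.

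The only nontrivial ingredient is the heat-kernel estimate $P^k(x,y)\le C(d)/\sqrt{k}$; its sharpness is already witnessed by $\mathbb{Z}$, and for arbitrary $d$-regular graphs this rate is the worst case. I would cite a standard reference (of Varopoulos / Pittet-Saloff-Coste type) rather than reproduce the Nash-inequality derivation. Everything else is bookkeeping with reversibility and double stochasticity.
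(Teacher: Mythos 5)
Your proof is correct, but it takes a genuinely different (if closely related) route from the paper's. The paper's proof (Appendix A) never computes a second moment: it sets $M_t(\mathbf{w}):=\sum_{i=1}^t|\W_{\mathbf{w}_i}(i)|$, gets $\mathbb{E}_{\la}[M_t(\mathbf{w})]=\la t$ from stationarity of the occupation measure (Fact \ref{fact: thinning}), and decomposes this expectation according to the \emph{first} time a walker sits on the space-time trajectory; since each such walker contributes at most $\sum_{j=0}^{t}\sup_{x,y}P^{j}(x,y)\le C(d)\sqrt{t}$ expected subsequent visits, one reads off $\la t\le \mathbb{E}_{\la}[N_t(\mathbf{w})]\,C(d)\sqrt{t}$ in two lines. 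You instead write the Poisson mean as $\la\sum_{u\neq\mathbf{w}_0}\Pr_u[\exists\, i\le t:\ X_i=\mathbf{w}_i]$ and lower-bound it by Paley--Zygmund plus a Cauchy--Schwarz over starting points, i.e.\ by $(t+1)^2\big/\big((t+1)+2\sum_{i<j}P^{\,j-i}(\mathbf{w}_i,\mathbf{w}_j)\big)$, closing with the same heat-kernel input $\sup_{x,y}P^{k}(x,y)\le C(d)/\sqrt{k}$ that the paper also uses (via Theorem 21.18 of Levin--Peres together with Proposition \ref{p:decayofreturn}). Morally both arguments compare the expected number of distinct visiting walkers with the expected total number of visits; the paper's first-visit decomposition buys brevity and avoids both the second-moment computation and the separate treatment of the planted vertex and of small $t$, while yours is more self-contained and makes the dependence on the trajectory explicit only through $P^{\,j-i}(\mathbf{w}_i,\mathbf{w}_j)$. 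Two points you should spell out rather than call trivial: for $1\le t\le T_0(d)$ the claimed bound needs the one-line observation that $\mathbb{E}_{\la}[N_t(\mathbf{w})]\ge\mathbb{E}_{\la}[N_1(\mathbf{w})]\ge\la\big(1-P(\mathbf{w}_0,\mathbf{w}_1)\big)\ge c\la$ by monotonicity in $t$ and double stochasticity; and for $t=\infty$ the ``Poisson'' statement is only the degenerate monotone limit, exactly as in your final sentence.
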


The proofs of the last two lemmas involve straightforward applications of Fact \ref{fact: thinning} combined with the general bound $\sup_{x,y \in V }P^{t}(x,y) \le \frac{C}{\sqrt{t+1}}$   and are thus deferred to Appendix \ref{a:regen}.
 
\subsection{Insertion tolerance, translation invariance, ergodicity.}
\label{s:percolationdefinitions}
We now show how the SN model on a graph  $G$ with a countable vertex set $V$  can be viewed as a \emph{long-range bond percolation
process} on $G$. This will allow us to use existing machinery from percolation theory in our study of the SN model.  

Let $S:= \{\{v,u \}:v \neq u,\, v,u \in V \} $. The \emph{standard form} of a probability space of a long-range bond percolation
process on $G$ is $(\{0,1\}^S,\Pr,\F_{\mathrm{cylinder}})$, where $\F_{\mathrm{cylinder}}$ is the the cylinder $\sigma$-algebra, the minimal  $\sigma$-algebra w.r.t.~which $\{x \in \{0,1\}^S :x(s)=1 \} $ is measurable for all $s \in S$. Each $x \in \{0,1\}^S$ can be viewed as a graph $\mathrm{graph}(x)=(V,E(x))$, where $s \in E(x)$ iff $x(s)=1$, in which case we say $s$ is \emph{open} in the configuration $x$. If $x(s)=0$ we say that $s$ is \emph{closed} in the configuration $x$. For $\mathcal{B} \subseteq \{0,1\}^S $ we write $\mathrm{graph}(\mathcal{B}):=\{\mathrm{graph}(b):b \in \mathcal{B} \}$.

 Let $(\Omega, \Pr,\F)$ be a probability space in which there exist zero-one valued random variables $(Z_{s})_{s \in S}$ ($S$ as above). This probability space gives rise to a (long-range bond) \emph{percolation process} on $G$ as follows. For every $\omega \in \Omega$ we construct a graph $\mathrm{graph}(\omega)=(V,E(\omega))$ by setting $s \in E(\omega)$ iff $Z_{s}(\omega)=1$. Note that  $\omega \mapsto \mathrm{graph}(\omega) $ need not be bijective.

Several  definitions which we soon give take a simple form when the percolation process is given in the standard form. These definitions extend to the general case as follows.
There is a canonical correspondence between $(\Omega, \Pr,\F)$ and a probability space having the standard form. For every $\omega \in \Omega $, we define $\psi (\omega) \in \{0,1\}^{S}$ by setting $\psi (\omega)(s)=Z_s(\omega)
$. For every $\mathcal{B} \in \F$ set $\psi(\mathcal{B}):=\{\psi(b):b \in \mathcal{B} \} \subseteq \{0,1 \}^S $. Conversely, for every $x \in \{0,1\}^S$ we set $\psi^{-1}(x):=\{\omega \in \Omega:\psi (\omega)=x  \} $ and for every $\mathcal{B} \subseteq \{0,1\}^S$ we set $\psi^{-1}(\mathcal{B} ):=\bigcup_{x \in \mathcal{B} }\psi^{-1}(x)$. By abuse of notation, we identify the restriction of $\Pr$ to the $\sigma$-algebra generated by $(Z_{s})_{s\in S}$ with the space $(\{0,1\}^S,\Pr_{\mathrm{cylinder}},\F_{\mathrm{cylinder}})$, where for every $\mathcal{B} \in \F_{\mathrm{cylinder}}  $, $\Pr_{{\mathrm{cylinder}}}(\mathcal{B} ):=\Pr(\psi^{-1}(\mathcal{B} )) $. That is, we identify  $x \in \{0,1\}^S$ and $\mathcal{B}  \in \F_{\mathrm{cylinder}}
 $ with $\psi^{-1}(x)$ and $\psi^{-1}(\mathcal{B} )$, respectively, and by abuse of notation write $\Pr(\mathcal{B} ) $ for $\Pr(\psi^{-1}(\mathcal{B} )) $. In particular, we say that $\Pr$ satisfies one of the properties defined below if $\Pr_{\mathrm{cylinder}} $ satisfies this property.

For every $x \in \{0,1\}^S$ and $s \in S $, we define $x_{s}^{+} \in \{0,1\}^S $ by setting $$x_{s}^{+}(s'):=\begin{cases}1 & s'=s, \\ 
x(s') & \text{otherwise}
\end{cases}.$$ That is, $x_{s}^{+}$ is obtained from $x$ by flipping the value at $s$ to 1 if necessary, while keeping the  the configuration unchanged elsewhere.  For every $s \in S$ and $\mathcal{B} \subseteq \{0,1\}^S$ we define $$\mathcal{B}_{s}^{+}:=\{b_{s}^{+}:b \in \mathcal{B} \}.$$ Note that $\mathrm{graph}(\mathcal{B} _{s}^{+})=\{(V,E(b) \cup \{s\} ):b \in \mathcal{B} \}$ (where as before $ \mathrm{graph}(b)=(V,E(b) )$; In other words, if we identify $\mathcal{B}_s^+ $ and $\mathcal{B}$ with collections of graphs, then the former is obtained from the latter by adding to each graph in $\BB$ the edge $s$, if it did not already appear in it).  We say that $\Pr$ is  \emph{insertion tolerant} (also known as having \emph{positive
finite energy}) if for all $\mathcal{B}  \in \F_{\mathrm{cylinder}} $ such that $\Pr(\mathcal{B} )>0$ also  $\Pr(\mathcal{B} _{e}^{+})>0$, for all $e \in E$.

Every $\phi \in \mathrm{Aut}(G) $ acts on $\{0,1\}^S$ ($\phi:\{0,1\}^S \to \{0,1\}^S $) via $\phi(x)(s)=x(\phi(s)) $. Clearly, $\mathrm{graph}(\phi(x))$ is isomorphic to $\mathrm{graph}(x)$.
We say that an event $\mathcal{A}  \in \F_{\mathrm{cylinder}}$ is  \emph{translation invariant} 
if for all
$\phi \in \mathrm{Aut}(G)$ we have that $\mathcal{A}=\phi(\mathcal{A})$, where $\phi(\mathcal{A}):=\{\phi(a):a \in \mathcal{A} \}$. We denote the $\sigma$-algebra of all translation invariant events by $\mathcal{I} $. We say that $\Pr$ is \emph{translation invariant} if for all $\mathcal{A}\in \F_{\mathrm{cylinder}}$ we have that $\Pr(\mathcal{A})=\Pr(\phi(\mathcal{A})) $ for all $\phi \in \mathrm{Aut}(G)$. When the percolation process is defined via Bernoulli random variables $(Z_s)_{s \in S}$, this is equivalent to the requirement that for all $\phi \in \mathrm{Aut}(G)$ we have that $(Z_{s})_{s \in S} \deq (Z_{\phi (s)})_{s
\in S}$, where $\deq$ denotes equality in distribution.
We say that $\Pr$ is \emph{ergodic} if $\Pr(\mathcal{A}) \in \{0,1\}$ for
all  $\mathcal{A} \in \mathcal{I}
$.

\begin{proposition}
\label{prop: ergodicity}
Let $G=(V,E)$ be an infinite connected vertex-transitive graph. Then for all $\lambda > 0$ we have that the law of $\ac_{t}^{\lambda}(G)$ is translation invariant and ergodic for all $\lambda>0$ and $t \in \Z_{+} \cup \{\infty \}$.
\end{proposition}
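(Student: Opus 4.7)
My plan is to realize the SN model as a measurable factor of an $\mathrm{Aut}(G)$-equivariant i.i.d.\ family indexed by $V$, after which translation invariance is automatic and ergodicity falls out of the standard percolation-theoretic $0$-$1$ argument based on shifting a cylinder event off itself.

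Concretely, I would attach to each $v\in V$ an independent ``packet'' $\omega_v=(N_v,(\gamma_v^{(i)})_{i\ge 1})$, where $N_v\sim\Pois(\la)$ is the number of walkers initially at $v$ and $(\gamma_v^{(i)})_{i\ge 1}$ is an i.i.d.\ sequence of LSRW trajectories from $v$, so that $|\W_v|=N_v$ and the $i$-th walker in $\W_v$ follows $\gamma_v^{(i)}$. Then $\ac_t^{\la}(G)$ is a deterministic measurable function $F_t(\omega)$ of $\omega=(\omega_v)_{v\in V}$. Any $\phi\in\mathrm{Aut}(G)$ acts on $\omega$ by $(\phi\cdot\omega)_v:=(N_{\phi^{-1}(v)},(\phi\circ\gamma_{\phi^{-1}(v)}^{(i)})_{i\ge 1})$; because $\phi$ preserves the LSRW transition kernel (being a graph automorphism of a regular graph) and a product i.i.d.\ measure is invariant under any relabeling of its index set, we have $\phi\cdot\omega\deq\omega$. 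Since $F_t(\phi\cdot\omega)=\phi(F_t(\omega))$ by construction, translation invariance of the law of $\ac_t^{\la}(G)$ follows at once.

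For ergodicity, let $\mathcal{A}\in\mathcal{I}$; its pullback $\widetilde{\mathcal{A}}:=F_t^{-1}(\mathcal{A})$ is invariant under the shift action on $\omega$. I would then run the classical approximation argument: given $\epsilon>0$, pick a cylinder event $\mathcal{B}$ measurable with respect to $(\omega_v)_{v\in F}$ for some finite $F\subset V$ with $\Pr_{\la}(\widetilde{\mathcal{A}}\triangle\mathcal{B})<\epsilon$. The crucial geometric input is that some $\phi\in\mathrm{Aut}(G)$ satisfies $\phi(F)\cap F=\eset$: fixing $v_0\in F$ and writing $D:=\mathrm{diam}(F)$, the fact that $G$ is infinite, connected and locally finite forces its diameter to be infinite, so by transitivity I can pick $\phi$ mapping $v_0$ to a vertex at distance $>2D$ from $v_0$; the triangle inequality then gives $d(\phi(v),v_0)>D$ for every $v\in F$, as $\phi$ is an isometry. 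For such $\phi$, the cylinders $\mathcal{B}$ and $\phi\cdot\mathcal{B}$ depend on disjoint packets and hence are independent, while translation invariance combined with invariance of $\widetilde{\mathcal{A}}$ yields $\Pr_{\la}(\widetilde{\mathcal{A}}\triangle(\phi\cdot\mathcal{B}))<\epsilon$. Combining, $\Pr_{\la}(\mathcal{B}\triangle(\phi\cdot\mathcal{B}))<2\epsilon$, which by independence expands to $2\Pr_{\la}(\mathcal{B})(1-\Pr_{\la}(\mathcal{B}))<2\epsilon$. Letting $\epsilon\downarrow 0$ forces $\Pr_{\la}(\mathcal{A})\in\{0,1\}$.

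I do not expect any real obstacle beyond getting the encoding right; in particular, each packet $\omega_v$ must carry an \emph{entire} infinite sequence of i.i.d.\ walks rather than merely the $N_v$ walks actually used, so that the assignment of walks to walkers is a pointwise function of $\omega_v$ alone and the family $(\omega_v)_{v\in V}$ is genuinely product i.i.d.\ (the key property that makes the independence step in the ergodicity argument work).
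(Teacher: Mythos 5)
Your proposal is correct and takes essentially the same route as the paper: translation invariance via the equivariant relabeling of the independent vertex packets (the paper's coupling replaces the walks at $v$ by $(\phi(\w_{n}^{\phi^{-1}(v)}))_n$ and $M_v$ by $M_{\phi^{-1}(v)}$), and ergodicity by approximating an invariant event by one depending only on the walkers started in a finite set and shifting it off itself by an automorphism to obtain independence and the identity $\Pr[\mathcal{A}]=\Pr[\mathcal{A}]^2$. The only cosmetic difference is in the choice of the separating automorphism: the paper asks for $\phi_{\epsilon}$ with $\dist(v,\phi_{\epsilon}(v))>2r_{\epsilon}$ for all $v$, while you displace a single reference vertex by more than $2D$ and use the triangle inequality, which gives the same disjointness $F\cap\phi(F)=\eset$.
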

When $G$ is a Cayley graph, it is straightforward to see that $\ac_{t}^{\lambda}(G) $ is a factor of i.i.d.'s and hence is indeed translation invariant and ergodic. When $G$ is only assumed to be transitive one can still present  $\ac_{t}^{\lambda}(G) $ as a factor of i.i.d.'s, but this requires some care. We defer the proof of Proposition \ref{prop: ergodicity} to Appendix \ref{a:ergodicity}.   
\subsection{Couplings and stochastic domination}
Let $G=(V,E)$ be a graph. As before, let $S:= \{\{v,u \}:v \neq u,\, v,u \in V \} $.  Equip  $\{0,1\}^S$ with the partial order
$\leqslant$, where $x \leqslant y$ iff $x(s)\le
y(s)$ for all $s \in S$. We say that $\mathcal{A} \in \F_{\mathrm{cylinder}} $ is increasing if $x \in
\mathcal{A} $ and $x \leqslant y$ imply that also $y \in
\mathcal{A}$. For any two probability
measures on $(\{0,1\}^S ,\F_{\mathrm{cylinder}})$, $\mu$ and $\nu$, we say that $\mu$ stochastically
dominates $\nu$ if $\mu(A) \ge \nu(A)$ for every increasing event $\mathcal{A}  \in \F_{\mathrm{cylinder}}
$.

 Let $(X_s)_{s \in S}$ and $(Y_s)_{s
\in S}$
be Bernoulli random variables defined on the same probability space $(\Omega,\Pr,\F)$. Let the marginal distributions of $(X_s)_{s \in S}$ and $(Y_s)_{s \in S}$ under $\Pr$ be $\mu$ and $\nu$, respectively. Such a construction is called a {\emph{coupling} of $\mu$ and $\nu$. It is
well-known and straightforward to show that if there exists
such
a coupling in which for all $s \in S$, $X_s \ge Y_s$
$\Pr$-$\mathrm{a.s.}$, then $\mu$ stochastically
dominates $\nu$.
Thus by Proposition \ref{prop: couplingforalllambda}:
\begin{proposition}
\label{prop: monotonicity}
For every underlying graph $G$ for the SN model we have that
for all $t \in
\Z_{+} \cup \{\infty \} $, the law of $\ac_{t}^{\lambda_2}$ stochastically dominates the law of  $\ac_{t}^{\lambda_1}$ for all $\lambda_1 \le \lambda_2$.  
\end{proposition}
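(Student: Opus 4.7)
The plan is to deduce the proposition as a routine consequence of Proposition \ref{prop: couplingforalllambda} combined with the standard coupling characterization of stochastic domination recalled just above the statement.

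First, I would invoke Proposition \ref{prop: couplingforalllambda} to obtain a single probability space $(\Omega,\Pr,\F)$ on which the SN model is realised simultaneously for every density $\lambda>0$, and such that, deterministically, the random graph $\ac_t^{\lambda_1}$ is a subgraph of $\ac_t^{\lambda_2}$ whenever $\lambda_1\le\lambda_2$, for all $t\in\Z_+\cup\{\infty\}$. Fix $t$ and $\lambda_1\le\lambda_2$. For each $s=\{u,v\}\in S$, define $Z_s^{\lambda_i}:=\mathbf{1}\{u\meett v\text{ in the SN model of density }\lambda_i\}$; these are the Bernoulli indicators of the edges of $\ac_t^{\lambda_i}$, and their joint law on $\{0,1\}^S$ is (by construction) the law of $\ac_t^{\lambda_i}$ realised in its standard form via the identification $\psi$ from Section~\ref{s:revthin}.

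Second, I would translate the subgraph containment given by Proposition \ref{prop: couplingforalllambda} into pointwise domination of the indicator variables: on the joint probability space, $\ac_t^{\lambda_1}\subseteq\ac_t^{\lambda_2}$ is equivalent to $Z_s^{\lambda_1}(\omega)\le Z_s^{\lambda_2}(\omega)$ holding for every $s\in S$ and every $\omega\in\Omega$. Thus the constructed coupling is an almost-sure pointwise coupling in the sense of the discussion preceding the proposition.

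Finally, I would invoke the standard fact (recalled in the paragraph immediately before the proposition) that an almost-sure pointwise coupling implies stochastic domination of the marginals: for any increasing event $\mathcal{A}\in\F_{\mathrm{cylinder}}$, if $(Z_s^{\lambda_1})_{s\in S}\in\mathcal{A}$ then $(Z_s^{\lambda_2})_{s\in S}\in\mathcal{A}$ as well, whence taking $\Pr$-probabilities yields $\Pr[\ac_t^{\lambda_2}\in\mathcal{A}]\ge\Pr[\ac_t^{\lambda_1}\in\mathcal{A}]$. This is precisely the asserted stochastic domination. There is no real obstacle in this argument; the only mild subtlety is the bookkeeping in passing between the coupled probability space and the standard-form percolation space, but this is already handled by the identification $\psi$ introduced in Section~\ref{s:revthin}, so the deduction is essentially immediate once Proposition \ref{prop: couplingforalllambda} is in hand.
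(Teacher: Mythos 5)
Your argument is correct and is essentially the paper's own proof: the paper likewise deduces the proposition immediately from Proposition \ref{prop: couplingforalllambda} together with the standard fact that an almost-sure pointwise coupling of the edge indicators implies stochastic domination on increasing events. (The only quibble is that the identification $\psi$ is set up in the subsection on insertion tolerance rather than in \S\ref{s:revthin}, which does not affect the argument.)
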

\subsection{Non-amenability and the spectral radius}
\label{sec: gap}
Let $G=(V,E)$ be a connected infinite regular graph. Let $\pi $ be the counting measure on $V$. The space of $L_2$ functions is given by  $\ell^2(V,\pi) :=\{f \in  \R^V: \|f \|_2<\infty \}$, where $\|f \|_2^2:=\langle f,f \rangle $ and $\langle f,g \rangle:=\sum_v f(v)g(v)$). Let $K$ be a symmetric (\textit{i.e.}, $K(x,y)=K(y,x)$ for all $x,y \in V $) transition kernel of a Markov chain $(X_t)_{t=0 }^{\infty}$ on $V$. We identify it with an operator by setting $(Kf)(x):=\sum_y K(x,y)f(y)=\E_x[f(X_1)]$. Its \emph{operator norm} is given by
\begin{equation}
\label{e:opnorm}
\|K\|:=\sup \{\sfrac{\|Kf\|_2}{\|f\|_2} :f \in \ell^2(V,\pi), f \neq 0  \}= \sup \{\sfrac{\langle K f,f \rangle}{\|f\|_2^2} :f \in \ell^2(V,\pi), f \neq 0  \} \end{equation} 
 (\textit{e.g.}, \cite[Ex.\ 6.7]{lyonsbook}). Let $x,y \in V$ be arbitrary vertices. The \emph{spectral
radius} of $K$ is 
\begin{equation}
\label{eq: spectralradiusdefK}
\rho(K):=\limsup_{n\rightarrow \infty} [K^{n}(x,y)]^{1/n}.
\end{equation}
 It is standard that (see \textit{e.g.}, \cite[p.\ 182-183]{lyonsbook}): \begin{itemize}
\item[(1)] The limit is independent of the choice of $x,y$. 
\item[(2)] $\rho(K)=\|K\| $.
\item[(3)] $K^{n}(x,y) \le [\rho(K)]^{n} $ for all $x,y$ and $n \ge 0$ (use $K^{n}(x,y)=\langle K^{n} 1_x ,1_y \rangle $ and (2)).  
\end{itemize}

Let $0 \le p<1$. Let $P_p$ be the transition kernel
of LSRW on $G$ with holding probability $p$ (\textit{i.e.}, $P_p=pI+(1-p)P_0$, where $P_0$ corresponds to simple random walk on $G$).  Let $x,y \in V$ be arbitrary vertices. We denote the \emph{spectral
radius} of $P_p$ by 
\begin{equation}
\label{eq: spectralradiusdef}
\rho_{p}:=\rho(P_{p})=\limsup_{n\rightarrow \infty} (P_{p}^{n}(x,y))^{1/n}.
\end{equation}
We denote the spectral radius of the SRW by $\rho(G):=\rho_0$. By (3) above
\begin{equation}
\label{eq: spectralxy}
P_{p}^{n}(x,y) \le \rho_{p}^{ n}, \text{ for all }x,y \in V \text{
and }n \ge 0 .
\end{equation}
Thus having $\rho_{p}<1$ is equivalent to having uniform exponential decay of
the transition probabilities w.r.t.~$P_p$. By (2) above, \eqref{e:opnorm} and the fact that \[\langle P_{p} f,f \rangle =p\langle f,f \rangle+(1-p)\langle P f,f \rangle \] we have that 
\begin{equation}
\label{e:rhop}
\rho_p=p+(1-p)\rho(G), 
\end{equation}
and so $\rho(G)<1 $ iff $\rho_p<1$ for all $p \in [0,1)$.
%
\section{Proof of Proposition \ref{prop: criticalintro}}
\label{sec:prop1.3}
\begin{proof}
We first note that if $\lac >0$ and $0<\la < \lac$, then there exists a sequence $(u_n,v_n)_{n \in \N } \subset V \times V $ such that $\Pr_{\la}[\fc(u_{n})=\fc(v_{n}) \text{ and } u_{n},v_{n} \in \Xi ] \le 2^{-n}$, for all $n$. By (both parts of) the Borel-Cantelli Lemma, $\Pr_{\la}$-$\as$ there exists some $n$ such that $  \fc(u_{n}) \neq \fc(v_{n})$ and 
 $ u_{n},v_{n} \in \Xi$. Indeed, on the one hand, $\as$ there are only finitely many $n$'s such that $\fc(u_{n})=\fc(v_{n}) \text{ and } u_{n},v_{n} \in \Xi$, while on the other hand, $\as$ there are infinitely many $n$'s such that $ u_{n},v_{n} \in \Xi$. Thus $\as$ there exists some $n$ such that  $ u_{n},v_{n} \in \Xi$ and $\fc(u_{n})=\fc(v_{n})$. Thus $\Pr_{\la}[\con]=0$, as desired. 

Conversely, fix some $\la > \lac \ge 0$. We shall show that $\Pr_{\la}[\con]=1$.
 By definition of $\lac$ (and the monotonicity of the model w.r.t.~$\la$)\footnote{Actually, we are using here also the fact that the Poisson($\la$) distribution conditioned on being positive is stochastically increasing in $\la$. To see this, consider the number of points in $[0,1]$ for a rate $\la$ Poisson process. Observe that conditioned on having at least 1 point, the location of the first point is stochastically decreasing in $\la$. Given that the first point is at $x$ the number of additional points has a $\Pois(\la(1-x))$ distribution (which is stochastically decreasing in $x$ and increasing in $\la$). We leave the remaining details to the reader.} there exists some $p>0$ such that  $\inf_{u,v}\Pr_{\la'
}[\fc(u)=\fc(v) \mid u,v \in \Xi] \ge p$  for all $\la' \ge \frac{\lambda+\lac}{2} $. Fix some $u,v \in V$. Let us condition on  $u,v \in \Xi $. Let $B_r$ be the ball of radius $r$ around $u$. Let $D_t$ be the event that there exist some $k \in \N$ and some $u_1=u,u_2,\ldots,u_{k+1}=v$ all belonging to $B_t$ such that $u_i \meett u_{i+1} $ for all $1 \le i \le k $. Since $D_{t} \nearrow  \{ u \siminfty v \} $ as $t \to \infty$ (recall that a path of acquaintances has a finite length) there exists a finite time $t_1$ and some finite set $A_1 \subset V$ (both may depend on $u,v$), such that w.p.~at least $p/2$, there exists a path of acquaintances between the walkers from $\W_u$ and $\W_v$ by time $t_1$, which only uses walkers from $\W_{A_1}:= \cup_{w \in A_1}\W_w $.  We think of this as the ``first trial" to connect the walkers in $\W_u$ to those in $\W_v$. 

Using the regeneration Lemma we show that after each failed trial, there will be another trial whose success probability is at least $p/2$, regardless of the information exposed in all previous trials. All trials involve some finite set of walkers and a finite amount of time (both may depend on the  information exposed in previous trials).

Denote by $Y_{a,1}(t)$ the number of walkers not from $\W_{A_{1}}$ which are at vertex $a$ at time $t$. By Lemma \ref{lem: regen}, there exists some $s_1$ so that $(Y_{a,1}(s))_{a \in V}$ stochastically dominate $\iid$ $\Pois(\la_2)$ random variables for all $s \ge s_1$, where $\la_2:=\lac +\frac{3(\lambda-\lac)}{4}$. We may assume that $s_1=t_1$ by increasing one of them if necessary.
Pick some $w_u \in \W_u$ and $w_v \in \W_v$ and let $(\mathbf{w}_u(t))_{t=0}^{\infty},(\mathbf{w}_v(t))_{t=0}^{\infty}$ be the LSRWs they perform, respectively. Let $\W_a(t)$ be the collection of walkers which are at vertex $a$ at time $t$.

Repeating the same reasoning as before (with $\la_2=\lac +\frac{3(\lambda-\lac)}{4}$ in the role of $\la$) yields that there must exist some $t_2>t_1$ and some finite set $A_2 \subset V$ (both may depend on $(\mathbf{w}_v(t_1),\mathbf{w}_u(t_1)) $) such that given the walks performed by the walkers in $\mathcal{A}_1:= \W_{A_{1}}$ by time $t_1$ (and that the first trial failed) we have that (i)-(ii) below hold: 
\begin{itemize}
\item[(i)] The conditional probability  that $w_u$ and $w_v$  have a path of acquaintances by time $t_2$ which uses only walkers from $\mathcal{(A}_2\setminus \mathcal{A}_1)\cup \{w_v,w_u \} $ where $\mathcal{A}_2:=\cup_{a \in A_2} \W_{a}(t_1)  $,  and all the acquaintances along this path were made between time $t_1$ and $t_2$ (ignoring possible earlier acquaintances if such occurred), is  at least $p/2$. 
\item[(ii)]  $(Y_{a,2}(t_2))_{a\in V}$ stochastically dominate $\iid$ $\Pois(\la_3)$ r.v.'s, where $\la_3:=\lac +\frac{5(\lambda-\lac)}{8}
 $ and $Y_{a,2}(t_2)$ is the number of walkers, not from $\mathcal{A}_1 \cup \mathcal{A}_2$, which are at $a$ at time $t_2$. 
\end{itemize}
It is clear how to continue. Namely, by induction on $i$ one can argue that there exist $t_{i+1}$ and finite sets $A_1,\ldots,A_{i+1} \subset V $ and $\mathcal{A}_j:=\cup_{a \in A_j} \W_{a}(t_{j-1})  $ for $j \in [i+1] $ (where $t_0:=0$ and both $t_{i+1} $ and $A_{i+1} $ may depend on $(\mathbf{w}_v(t_{i+1}),\mathbf{w}_u(t_{i+1})) $)) such that $t_{i+1}>t_i$ and given the walks performed by the walkers in $\cup_{j \in [i]}\mathcal{A}_{j} $ by time $t_{i} $ we have that (i)-(ii) below hold:
\begin{itemize}
\item[(i)] The conditional probability  that $w_u$ and $w_v$  have a path of acquaintances by time $t_{i+1}$ which uses only walkers in \[\mathcal{(A}_{i+1} \setminus \cup_{j=1}^{i}  \mathcal{A}_{j}) \cup \{w_v,w_u \} ,\]  and all the acquaintances along this path were made between time $t_{i}$ and $t_{i+1}$ (ignoring possible earlier acquaintances if such occurred) is  at least $p/2$. 
\item[(ii)]  $(Y_{a,i+1}(t_{i+1}))_{a\in V}$ stochastically dominate $\iid$ $\Pois(\la_{i+2})$ random variables, where $\la_{i+2}:=\lac +\frac{(2^{i+1}+1)(\lambda-\lac)}{2^{i+2}}
 $ and $Y_{a,i+1}(t_{i+1})$ is the number of walkers, not from $\cup_{j \in [i+1]}\mathcal{A}_{j}  $, which are at $a$ at time $t_{i+1}$. 
\end{itemize}
As each  trial has success probability at least $p/2$, regardless of the result of the previous rounds, $\as$ one of the trials will be successful, where here success means that the event from (i) occurs. 
\end{proof}
\section{The amenable case}
\label{sec: amenable}
We shall utilize the following theorem, taken from \cite{gandolfi1992uniqueness}, in our analysis of the amenable case. We note that in  \cite{gandolfi1992uniqueness} only the graphs $\Z^d$  for $d \in \N$ (or some half spaces) were considered. However their analysis can easily be extended to all amenable vertex-transitive graphs.
\begin{theorem}
\label{thm: longrangeuniqueness}
Let $G=(V,E)$ be an infinite connected vertex-transitive amenable graph. Let $(\Omega,\Pr) $ be a translation invariant long range bond percolation process on $G$ possessing insertion tolerance. Then $\Pr [\text{there exists at most one infinite connected component}]=1$.
\end{theorem}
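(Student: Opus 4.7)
The plan is to adapt the classical Burton--Keane uniqueness argument of Gandolfi, Keane and Newman, replacing $\mathbb{Z}^d$-boxes by F\o lner sets. Let $N(\omega)$ be the number of infinite connected components of $\mathrm{graph}(\omega)$; this random variable is translation invariant. Passing to the ergodic decomposition (which preserves both translation invariance and insertion tolerance) I may assume $\Pr$ is ergodic, so that $N$ is almost surely equal to some constant $c \in \Z_+ \cup \{\infty\}$. If $2 \le c < \infty$, choose a finite $F \subset V$ large enough that with positive probability $F$ meets every one of the $c$ infinite clusters; applying insertion tolerance finitely many times to open a tree of edges inside $F$ joining those intersections produces a positive-probability event on which at most $c-1$ infinite clusters remain, contradicting $N \equiv c$. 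Thus $c \in \{0, 1, \infty\}$.

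To rule out $c = \infty$, I would deploy the trifurcation argument. Call $v$ a \emph{trifurcation} if $v$ lies in an infinite cluster and removing the open edges of $\mathrm{graph}(\omega)$ incident to $v$ splits that cluster into at least three infinite subclusters. Assuming $c = \infty$, pick a finite ball $B$ around a reference vertex $o$ such that with positive probability $B$ meets at least three distinct infinite clusters. After first exposing the configuration outside $B$ and conditioning on the positive-probability event that three disjoint infinite open paths meet $\partial B$ in three different infinite clusters, insertion tolerance on the edges of $B$ allows me to open a star-shaped tree at $o$ reaching one endpoint of each of those three external paths. On the resulting event $o$ is a trifurcation, so $p := \Pr(o \text{ is a trifurcation}) > 0$; by translation invariance $p$ is independent of $o$.

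The final step invokes amenability via the Burton--Keane counting lemma: for every finite $F \subset V$, the number of trifurcations in $F$ is at most $|\partial_E F|$, the number of edges of $G$ with exactly one endpoint in $F$. Indeed, each trifurcation $v \in F$ determines three vertex-disjoint infinite open paths out of $v$, and these must exit $F$ through three distinct boundary edges. Taking expectations yields $p|F| \le |\partial_E F|$, and choosing $F = F_n$ along a F\o lner sequence (which exists because $G$ is amenable and vertex-transitive) forces $p \le |\partial_E F_n|/|F_n| \to 0$, a contradiction. The main obstacle, and the only place where the original $\mathbb{Z}^d$ argument needs care in the transitive setting, is the insertion-tolerant surgery producing positive probability of a trifurcation at $o$: one must ensure that the edges newly opened inside $B$ do not inadvertently alter the three infinite clusters outside $B$, which is why the conditioning on disjoint open infinite paths outside $B$ is carried out first. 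Once this surgery is in place, the remainder is a routine extension of the scheme of \cite{gandolfi1992uniqueness} from cubes to F\o lner sets.
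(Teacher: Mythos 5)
Your overall route is the one the paper intends: the paper does not prove this statement itself but quotes it from \cite{gandolfi1992uniqueness} and merely remarks that the Gandolfi--Keane--Newman analysis extends from $\Z^d$ to amenable vertex-transitive graphs, which is exactly the Burton--Keane-with-F\o lner-sets scheme you outline (and, as your use of a F\o lner sequence implicitly acknowledges, amenability is a genuinely needed hypothesis, even though it is omitted from the printed statement). The concrete step of your write-up that fails is the counting lemma. The process here is a \emph{long-range} percolation: the open connections are arbitrary pairs $\{u,v\}$, not edges of $G$. An infinite open path emanating from a trifurcation in $F$ can leave $F$ by a single open pair jumping from the interior of $F$ to a vertex far outside $F$, without ever using an edge of $G$ with exactly one endpoint in $F$. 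Hence the bound ``number of trifurcations in $F$ is at most $|\partial_E F|$'' is not valid as stated, and with it the inequality $p|F_n| \le |\partial_E F_n|$ that drives your contradiction collapses. A correct count must be phrased in terms of the open pairs (or their endpoints in $F$) joining $F$ to $V \setminus F$, and one must then show that the \emph{expected} number of such crossings is $o(|F_n|)$ along the F\o lner sequence; since no decay or moment assumption on the long-range connections is available, this is precisely the nontrivial long-range bookkeeping carried out in \cite{gandolfi1992uniqueness}, and it is the part your sketch dismisses as a routine extension. (By contrast, the issue you single out as the main obstacle, that the surgery inside $B$ might disturb the clusters outside $B$, is harmless: you only open edges, and opening edges cannot disconnect anything.)

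A second unjustified step is your opening reduction: the assertion that the ergodic decomposition preserves insertion tolerance. This is not a fact you can simply quote, and it is exactly the kind of inheritance that is delicate for the events in play: conditioning an insertion-tolerant measure on an invariant event such as $\{N=k\}$ need not yield an insertion-tolerant measure, since opening an edge between two distinct infinite clusters exits the event, and ergodic components are conditionings on the invariant $\sigma$-field, so the same concern applies. Either this step needs a proof, or the argument should be rearranged so as not to require ergodicity at all, which is how \cite{gandolfi1992uniqueness} proceed: the trifurcation-density contradiction only uses translation invariance of expectations (no ergodic theorem), and their uniqueness theorem is proved for general, not necessarily ergodic, translation-invariant measures. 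As written, your elimination of the case $2 \le N < \infty$ rests entirely on the almost-sure constancy of $N$ obtained from this unproved reduction, so it is a genuine gap rather than a presentational shortcut.
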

For  $\mathcal{B} \in \F_{\mathrm{cylinder}}$ and  $e \in E$ let $ \widehat{\mathcal{B}_e}:=\{(V,F): e \in F, F \supseteq F' \text{ for some }(V,F') \in \mathcal{ \mathrm{graph}(\mathcal{B})} \} $ be the collection of all graphs obtained by adding to each graph in $\mathrm{graph}(\mathcal{B})$ some collection of edges containing $e$. 

Note that for all $\mathcal{B} \in \F_{\mathrm{cylinder}}$ and  $e=\{u,v\} \in E$, by planting additional walkers at $u$ and $v$ (this is done in the proof below) we see  that     \[ \Pr_{\la}[\ac \in \mathrm{graph}(\mathcal{B})]>0 \Longrightarrow \Pr_{\la}[\ac \in \widehat{\mathcal{B}_e}]>0. \]

 The problem is that planting additional walkers at $u$ and $v$ might add more than just the edge $\{u,v\}$ to $\ac$. Thus this idea cannot be used to establish insertion tolerance. In order to utilize Theorem \ref{thm: longrangeuniqueness}, we construct an auxiliary model, stochastically dominated by the SN model, to which this idea applies. In order to ensure we can add to the obtained graph with positive probability an edge $e$ and \emph{only} that edge, in the auxiliary model the planted walkers can only make acquaintances at time 1. 
\begin{theorem}
\label{thm: amenablecase}
Let $G=(V,E)$ be an infinite connected vertex-transitive amenable graph. Then $\lac=0$. 
\end{theorem}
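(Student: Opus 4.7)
By Proposition~\ref{prop: criticalintro} the identity $\lac(G)=0$ is equivalent to $\Pr_\la[\con]=1$ for every $\la>0$; since $\con$ is invariant under $\mathrm{Aut}(G)$, Proposition~\ref{prop: ergodicity} reduces the task to showing $\Pr_\la[\con]>0$ for each fixed $\la>0$. The strategy is to prove that $\ac(G)$ has a unique infinite component that almost surely contains every vertex of $\Xi$, by coupling $\ac$ with an auxiliary insertion-tolerant percolation process and invoking Theorem~\ref{thm: longrangeuniqueness}.

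\emph{Existence of an infinite component.} For any $v\in V$, conditional on $v\in\Xi$, the walker based at $v$ meets infinitely many other walkers by Lemma~\ref{lem: infcollide}. Hence the $\ac$-component of $v$ contains infinitely many distinct vertices, and so $\ac$ has an infinite component almost surely.

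\emph{Auxiliary process.} The SN model itself is not obviously insertion-tolerant: simply planting extra walkers to ``create'' an edge $e=\{u,v\}\in E$ also creates other, uncontrolled acquaintances. We bypass this by constructing $\ac'\subseteq\ac$ as follows. Mark each walker independently as \emph{ephemeral} with probability $q\in(0,1)$; declare that ephemeral walkers contribute edges to $\ac'$ only through meetings at time~$1$, while non-ephemeral walkers contribute edges in the ordinary way. The resulting $\ac'$ is translation invariant and ergodic. For $q$ small, the non-ephemeral sub-population is a $\Pois((1-q)\la)$ SN model and the preceding paragraph shows $\ac'$ also has an infinite component almost surely. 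For insertion tolerance, given any event $\BB$ of positive $\ac'$-probability and $e=\{u,v\}\in E$, we plant two additional ephemeral walkers, one at $u$ and one at $v$; with positive probability they take one-step walks making them meet at time~$1$ (at $u$ or at $v$). By Fact~\ref{fact: thinning}, ephemeral contributions to $\ac'$ decompose as independent Poisson increments indexed by walks, so this planting adds exactly the edge $e$ and leaves the remainder of $\ac'$ distributed as before. Thus $\ac'$ has positive finite energy, and Theorem~\ref{thm: longrangeuniqueness} yields a unique infinite cluster $C_\infty'$ in $\ac'$ almost surely.

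\emph{Conclusion and main obstacle.} By ergodicity, $C_\infty'$ has positive vertex density $\theta>0$ in $V$. For any $v\in\Xi$, the walker at $v$ meets infinitely many other walkers (Lemma~\ref{lem: infcollide}); by the Regeneration Lemma~\ref{lem: regen}, after a fixed time the occupation measure of walkers not yet met is product $\Pois(\la)$, so with probability one at least one meeting partner of $v$ starts at a vertex of $C_\infty'$. This $\ac$-connects $v$ to $C_\infty'$. Since every $v\in\Xi$ is $\ac$-connected to the same $C_\infty'$, all of $\Xi$ lies in a single $\ac$-component, i.e.\ $\con$ occurs. The chief obstacle is the construction of $\ac'$: the ephemeral device must be engineered so that planting introduces precisely one new edge and perturbs nothing else, which relies crucially on the independence structure of Fact~\ref{fact: thinning}. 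A secondary technical point is ensuring, via Lemma~\ref{lem: regen}, that the meeting partners of $v$'s walker are genuinely drawn from a $\Pois(\la)$ ensemble with positive overlap with the (previously exposed) set $C_\infty'$.
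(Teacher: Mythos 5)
Your overall strategy is the same as the paper's: build an auxiliary translation-invariant, insertion-tolerant long-range percolation process dominated by $\ac$, apply Theorem \ref{thm: longrangeuniqueness} to obtain a unique infinite cluster, and then attach every vertex of $\Xi$ to it. The genuine gap is in your insertion-tolerance step. Restricting ephemeral walkers to time-$1$ meetings is not enough: when you plant one extra ephemeral walker at $u$ and one at $v$ and force them to meet at time $1$, each planted walker also meets, at time $1$, every other ephemeral walker co-located with it, and each such meeting creates an edge from $u$ (or $v$) to that walker's starting vertex. So the planting does not land in $\BB_{e}^{+}$ (which by definition consists of configurations of $\BB$ with \emph{exactly} the edge $e$ added); it only lands in the larger family $\mathcal{\widehat A}_e$, which, as the discussion preceding Theorem \ref{thm: amenablecase} stresses, is exactly why the naive planting argument fails. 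Nor does Fact \ref{fact: thinning} rescue the step: the Poisson variables there are indexed by walks, not by edges of $\ac'$ --- one ephemeral walker can generate several edges and one edge can be generated by many pairs of walkers --- so the edge indicators of your auxiliary graph are neither independent across edges nor independent of the rest of the configuration, and positive finite energy does not follow from what you wrote. The missing idea is the paper's finer partition of the auxiliary population: at each vertex $v$ split it into $d$ classes $\mathcal{W}(v,u)$, $u\sim v$, and declare $\{u,v\}$ open only if some walker of $\mathcal{W}(v,u)$ meets some walker of $\mathcal{W}(u,v)$ at time $1$. Then each edge has its own private, independent Poisson ``coin'', independent of the main population, a planted pair can add the edge $e$ and nothing else, and insertion tolerance is immediate.

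A secondary problem is your attachment step. The claim that, by Lemma \ref{lem: regen}, with probability one some meeting partner of $v$'s walker starts in $C_\infty'$ is not justified as stated: $C_\infty'$ is a function of the entire walker configuration, including the very walkers near $v$ you are exposing, so it cannot be treated as a fixed positive-density target for a regeneration estimate; and this step carries real weight in your scheme, since a vertex all of whose walkers are ephemeral need not lie in any infinite cluster of $\ac'$. The paper sidesteps both issues by running the construction twice with the two sub-populations exchanged ($H_1$ and $H_2$): every $v\in\Xi$ carries a walker of one of the two main populations, hence has infinite degree in the corresponding $H_i$ (Lemma \ref{lem: infcollide}) and lies in its unique infinite cluster, and a single vertex carrying walkers of both populations glues the two clusters together. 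A similar repair is available in your setting (e.g.\ observing that every met non-ephemeral walker starts at a vertex already known to lie in $C_\infty'$), but some such argument must replace the density heuristic.
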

\begin{proof}
Let $\la >0$. We partition the particles into two independent sets, $\W^1,\W^2$ of density $\la/2$ each. We may consider the evolution of the model only w.r.t.\ $\W^1$  (as if $\W^2$ did not exist). Denote the obtained acquaintances graph w.r.t.\ $\W^1$ for time $ \infty $ by $H:=(V,E_{1})$. Denote the degree of $G$ by $d$. We now partition $\W^2 $ into $d$ sets of density $\la/(2d) $ as follows. For $v \in V$ let $N(v):=\{u \in V:\{u,v\} \in E \}$ be the set of its neighbors. Let $\W^i_v $ be the particles in $\W^i$ (where $i \in \{1,2\}$) which initially occupy $v$. We partition it into $d$ sets: $\mathcal{W}(v,u)$ for $u \in N(v)$. Let $E_{2} \subseteq E $ be the collection of edges $\{u,v\} \in E$ such that there is some particle $w \in \mathcal{W}(v,u) $ and some particle $w' \in \mathcal{W}(u,v) $ which met at time 1 (note that this is always possible as we take the holding probability to be positive). 

Let $H_{1}:=(V,E_{1} \cup E_{2})$. Note that by Poisson thinning the events $\{e \in E_{2} \}$ are independent for different $e \in E$ and thus 
  $H_{1}$ is insertion tolerant. The proof of translation invariance of the SN model, with minor adaptations can easily be extended to show that the law of $H_{1}$ is translation invariant.

We may switch the roles of $\W^1 $ and $\W^2$ in the above construction and now partition each $\W^1_v$ further into $d$ sets  $\mathcal{\widehat W}(v,u)$ for $u \in N(v)$ to get: $\widehat H:=(V, \widehat E_{1}) $ the acquaintances graph for time $\infty$ defined only w.r.t.\ $\W^2$ and  $\widehat E_{2} \subseteq E $ the collection of $\{u,v\} \in E$ such that there is some particle $w \in \mathcal{\widehat W}(v,u) $ and some particle $w' \in \mathcal{\widehat W}(u,v) $ which met at time 1. By symmetry also $H_2:=(V,\widehat E_{1} \cup \widehat E_{2}) $ is insertion tolerant and translation invariant.

Clearly, $\widetilde H:=(V, E_{1}  \cup E_2 \cup \widehat E_{1} \cup \widehat E_{2})$ is a subgraph of the (usual) acquaintances graph for time $\infty$ (when the walkers are not partitioned into different sets). Thus it suffices to argue that $\as $ it has a unique infinite connected component containing all $u \in \Xi=\{v:\W_v \neq \eset \}$.

It follows from Theorem \ref{thm: longrangeuniqueness} that both $H_1$ and $H_2$ $\as$ have at most one infinite connected component. Now if $\W_v \neq \eset  $, then $\W_v^i \neq \eset $ for some $i \in \{1,2\}$. It is not hard to verify that for all positive $\la'$, the SN model with particle  density $\la'$  satisfies that every $u \in \Xi$ lies in an infinite connected component of the acquaintances graph for time $\infty$,  as every walker meets infinitely many other walkers by time $\infty$ (this follows from Lemma \ref{lem: infcollide}). By uniqueness it follows that every $v$ such that $\W_v^i \neq \eset$ lies in the same infinite connected component of $H_i$. As $\as$ there is some $v$ such that both $\W_v^1 \neq \eset$ and $\W_v^2 \neq \eset$ it follows that  $\widetilde H$ has a unique infinite connected component containing all $u \in \Xi$.         
\end{proof}

\section{An upper bound on the critical density in the non-amenable setup.}
\label{s:upperthm1}
\begin{theorem}
\label{thm: mainupper}
Let $G=(V,E)$ be a $d$-regular connected infinite non-amenable graph. Denote the spectral radius of LSRW with holding probability $1/(d+1)$ (respectively, $1/2$) by $\rho$ (respectively, $\rho_{1/2}$). If the holding probability of the walks is  $1/(d+1)$ (respectively, $1/2$) then $\lac \le  (d+1+\frac{2}{1-\rho})\log 8 $ (respectively, $\lac \le \frac{20 \log d}{1-\rho_{1/2}}$).
\end{theorem}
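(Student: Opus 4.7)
The plan is to establish the uniform lower bound $\inf_{u,v}\Pr_{\la}[u\siminfty v\mid u,v\in\Xi]\ge p>0$ for $\la$ above the stated threshold, which by the definition of $\lac$ suffices. I would argue by producing, for each pair $(u,v)$, a path of acquaintances with positive probability, via two mechanisms whose contributions match the two summands inside the parentheses.

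First, consider the ``local'' meetings between neighbors. For adjacent $u,v$ and holding probability $p_0$, Fact~\ref{fact: thinning} shows that the number of walkers from $\W_u$ still sitting at $u$ at time $1$ is $\Pois(\la p_0)$, while the number of walkers from $\W_v$ having jumped to $u$ at time $1$ is an independent $\Pois(\la(1-p_0)/d)$. All walkers occupying $u$ at time $1$ pairwise meet, so when both counts are positive we have $\fc(u)=\fc(v)$. For $p_0=1/(d+1)$ both means equal $\la/(d+1)$, so $\la\ge(d+1)\log 8$ gives this event probability $\ge(7/8)^2$, explaining the $(d+1)\log 8$ contribution. For $p_0=1/2$ the two means become $\la/2$ and $\la/(2d)$, and the binding constraint becomes $\la\gtrsim\log d$, which accounts for the $\log d$ factor in the second bound.

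To reach arbitrary pairs, I would run a generation-based exploration of the friend cluster from $w_u\in\W_u$. At step $n$, I would apply Lemma~\ref{lem: regen} (regeneration) to expose a fresh independent Poisson process of walkers not yet revealed, and declare the $(n{+}1)$-st generation to consist of those freshly revealed walkers that meet some existing member during a suitable time window. For a single walker performing an LSRW, the number of such new walkers it meets is Poisson with mean controlled by sums of the form $\la\sum_{s\le t}P^{s}(\gamma(0),\gamma(s))$; the spectral radius bound $P^{s}(x,y)\le\rho^{s}$ implies that the overlap with previously explored trajectories is dominated by a geometric series of total mass $\tfrac{1}{1-\rho}$. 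Hence the effective offspring mean per walker is at least $\la - O(1/(1-\rho))$, and requiring this to exceed $\log 8$ gives the second term $\tfrac{2}{1-\rho}\log 8$ in the threshold. Above the threshold, the dominated Galton--Watson process is supercritical with survival probability $\ge 7/8$.

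Finally, running the same exploration from $w_v\in\W_v$ independently, both exploration trees survive with probability $\ge (7/8)^2$; they intersect with further positive probability because each discovers a positive-density subset of walkers drawn from independent slices of the Poisson field, so $\Pr[\fc(u)=\fc(v)\mid u,v\in\Xi]\ge p>0$ uniformly in $u,v$. The hardest part will be making the comparison to a genuine Galton--Watson process rigorous: distinct explored trajectories share the same underlying Poisson field of walkers and can revisit common vertices, so one needs the regeneration lemma (Lemma~\ref{lem: regen}) to refresh at each generation and non-amenability (via $\rho<1$) to keep the Poisson-field depletion along overlapping trajectories summable. The matching step will also require care, as in general non-amenable \emph{non-transitive} graphs two positive-density subsets need not intersect a priori; one likely combines a sprinkling argument with the uniform spectral bound to force the intersection.
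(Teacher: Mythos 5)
Your plan has a genuine gap at exactly the step you flag as ``the hardest part'': connecting the two explorations. Survival of two independent supercritical Galton--Watson-type explorations started from $w_u$ and $w_v$ does not imply they intersect with probability bounded away from $0$ \emph{uniformly in $u,v$}, and the uniformity over all pairs (in particular over arbitrarily large $d(u,v)$) is precisely what the definition of $\lac$ requires. On a non-amenable graph such as $\T_d$, the walkers discovered by such an exploration form a transient, outward-spreading family, and the probability that two independent such families ever share a space-time point can decay with $d(u,v)$; nothing like a ``positive-density subsets must meet'' principle is available (indeed the whole content of the non-amenable regime in this paper is that infinite friend clusters can fail to merge at small $\la$), and no sprinkling argument is supplied or obvious. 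The paper's proof is organized around solving exactly this problem: it does not grow two clusters and hope they collide, but instead constructs two vertex sequences $\gamma,\gamma'$ --- one fed by fresh walkers recruited into $\fc(v)$, the other into $\fc(u)$ --- and steers them toward each other, showing via Poisson thinning plus the depletion bound $\sum_j P^j(x,y)\le \sum_j\rho^j=\tfrac1{1-\rho}$ that at every step a fresh $\Pois(\alpha)$ batch of unexposed walkers crosses the designated edge, so that $d(\gamma_t,\gamma'_t)$ has uniformly negative drift and the two clusters merge a.s.\ regardless of the initial distance. Your spectral-radius depletion estimate is the right ingredient (it is the paper's \eqref{eq: betaiy}--\eqref{eq: RiRi'}), but it is used there to sustain the drift of the two approaching paths, not to certify supercriticality of a branching process; without the steering mechanism your argument does not yield the required uniform lower bound on $\Pr_{\la}[u\siminfty v\mid u,v\in\Xi]$.

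A secondary but substantive error is your accounting for the holding-probability-$1/2$ bound. For the ``local'' mechanism you describe, the number of walkers from $\W_v$ that jump to a fixed neighbor $u$ at time $1$ is $\Pois(\la/(2d))$, so making that count positive with constant probability forces $\la\gtrsim d$, not $\la\gtrsim\log d$; your heuristic therefore does not explain, and cannot reproduce, the bound $20\log d/(1-\rho_{1/2})$. The paper obtains $\log d$ differently: with holding probability $1/2$ the advancing edge is crossed only by a $\Pois(2d^{-1}\log d)$ batch (so each step advances only with probability about $d^{-1}\log d$), but the path is allowed to stand still using a $\Pois(4\log d)$ batch of lazy walkers staying at the current vertex; the step moves away from the target only when both batches are empty, which has probability about $e^{-2d^{-1}\log d}\,d^{-4}$, negligible compared with the gain $d^{-1}\log d$, so the drift is still negative. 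This interplay between ``advance rarely'' and ``hold cheaply'' is what makes a density of order $\log d$ suffice, and it has no counterpart in your two-mechanism decomposition.
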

We first explain the main idea behind the proof of Theorem \ref{thm: mainupper} in simple words, in a slightly simpler setup. We concentrate here on the case that the holding probability is $\frac{1}{d+1}$. Let $u,v \in V$. We want to bound the conditional probability, given that $u \in \Xi $ (\textit{i.e.}, that $u$ is initially occupied), that the friend cluster  of some walker $w \in \W_u$ eventually contains some walker which visited $v$. (Note that this need not imply that $u \siminfty v $. Thus in the proof of Theorem \ref{thm: mainupper} we will have to work with two ``paths", rather than one. Namely, we will construct also a path starting from $v$ in such a way that the two paths will collide.) 

Note that the number of particles in $\W_u \setminus \{w \} $ does not have a $\Pois(\la) $ distribution. To deal with this, in the proof of Theorem \ref{thm: mainupper} we shall use the regeneration lemma. But for the sake of the current discussion, let us assume that the walker $w$ was planted at $u$ at time 0, so that $\W_u \setminus \{w \} \sim \Pois(\la) $. Pick some $ \hat u_1 \sim u $ which is closer to $v$ than $u$ is. The number of walkers from $\W_u \setminus \{w \} $ which crossed from $u=u_{0}$ to $\hat u_1$ has a $\Pois(\la/(d+1))$ distribution.

 Fix some $\alpha \le \la/(d+1)$ to be determined shortly. By Poisson thinning we can look at time one at a subset  $\W(1) $  of them whose size has a  $\Pois(\alpha)$ distribution (namely, by including in it each walker which crossed from $u_0$ to $\hat u_1$ at time 1 w.p.\ $\alpha/[\la/(d+1)] $ independently). If it is not empty, we set $u_1:=\hat u_1 $, otherwise, we set $u_1 $ to be the location of $w$ at time 1. 

Assume by induction that we have defined the vertices $u_0,u_1,\ldots,u_i $ and $\hat u_1,\ldots,\hat u_i $ as well as disjoint sets of walkers $\W(1),\ldots,\W(i)$, such that 
\begin{itemize}
\item For all $j \in [i]$ the size of $\W(j)$ has a  $\Pois(\alpha)$ distribution  (given the information exposed up to the time $\W(j)$ was defined; i.e., given  $\W(1),\ldots,\W(j-1)$ as well as  $u_0,u_1,\ldots,u_{j-1} $ and $\hat u_1,\ldots,\hat u_{j-1} $).
\item   For all $j \in [i]$ the set $\W(j)$  is a subset of the set of walkers which was at $u_{j-1} $ at time $j-1$ and then moved to $\hat u_{j}$ at time $j$, where $\hat u_j $ is some neighbor of $u_{j-1}$ which is closer to $v$ than $u_{j-1}$ is. 
\item If $|\W(j)|>0$ we set $u_j=\hat u_j $. Otherwise, we set $u_j $ to be a vertex closest to $v$ which is occupied at time $j$ by some walker from $\cup_{m=0 }^{j-1} \W(m) $ (where $\W(0):=\{w\}$).
\end{itemize}

Observe that if $\alpha$ is sufficiently large, then the sequence $(u_i:i\in \Z_+) $ has a positive drift towards $v$. In order for this construction to work, it is necessary that the distribution of the number of walkers which are at $u_i $ at time $i$, which do not belong to $\cup_{m=0 }^{i} \W(m) $, will stochastically dominate the $\Pois(\alpha(d+1)) $ distribution. In fact, it is not hard to prove by induction that for all $a_1,\ldots,a_{i},\hat a_1,\ldots,\hat a_{i-1} \in V $,  conditioned on  $u_0=a_{0},u_1=a_{1},\ldots,u_i=a_{i} $ and $\hat u_1=\hat a_{1},\ldots,\hat u_{i-1}=\hat a_{i-1} $,  the aforementioned law is a Poisson with parameter $\la - \alpha \sum_{j=1}^{i}p_j $, where $p_j=P^{i-j}(a_j,a_i)$ is the probability of a given walker from $\W(j)$ to be at $u_i$ at time $i$. Clearly, $p_j \le \sup_{x,y}P^{i-j}(x,y) \le \rho^{i-j} $. We get that $\la - \alpha \sum_{j=1}^{i}p_j \le \la - \alpha/(1-\rho) $, and thus the construction is indeed possible, provided that $\la$ is sufficiently large. Crucially, after conditioning on  $u_0=a_{0},u_1=a_{1},\ldots,u_i=a_{i} $ and $\hat u_1=\hat a_{1},\ldots,\hat u_{i-1}=\hat a_{i-1} $  as above, using the induction hypothesis, the induction step requires only a standard use of Poisson thinning.   
  
\emph{Proof of Theorem \ref{thm: mainupper}.}
First consider the case that the holding probability is $(d+1)^{-1}$. Fix some $u,v \in \Xi$ and $\la >  (d+1+\frac{2}{1-\rho})\log 8  $.  We shall construct two random paths (more precisely, two sequences of vertices) $\gamma,\gamma' $  such that the walkers which are at $\gamma_t$ (respectively, $\gamma_t'$) at time $\ell+ t$ (for some $\ell$ to be determined below) are in $\fc(v)$ (respectively, $\fc(u)$). 

Denote the natural filtration of $(\gamma_t,\gamma'_{t})_{t \ge 0}$ by $\F_t $. We will show that there exists some $c>0$ such that for all $t  $  on the event $d(\gamma_{t},\gamma'_{t})>0$ we have      $\mathbb{E}_{\la}[d(\gamma_{t+1},\gamma'_{t+1})-d(\gamma_{t},\gamma'_{t})  \mid \F_t] \le -c$, (where $d(\cdot,\cdot)$ is the graph distance, \textit{i.e.}, the paths have a bias towards each other). This clearly implies that given that $u,v \in \Xi$, we have that $\fc(u)=\fc(v)$ $\Pl$-$\as$

Fix some $\alpha > \log 8$ such that $\la> (d+1+\frac{2}{1-\rho})\alpha $. At time 0 expose some $w_v \in \W_v=:\mathcal{A}_0 ,w_{u} \in \W_u =:\mathcal{B}_0 $ and their locations at time $\ell$ and set $\gamma_0 $ and $\gamma_0'$ to be these locations, resp., where $\ell$ is sufficiently large so that the distribution of the number of walkers, not belonging to $\mathcal{A}_0 \cup \mathcal{B}_0 $ at the different vertices of $G$ at all times $t \ge \ell $, stochastically dominates that of $\iid$ $\Pois(\la')$ random variables, where $\la':=\alpha (d+1+\frac{2}{1-\rho})$. In other words, \begin{equation}
\label{e:kappata}
\inf_{t \ge \ell,a \in V} \kappa_t(a) \ge \la', \quad \text{where}\quad \kappa_t(a):=\la(1-P^t(u,a)-P^t(v,a))  .\end{equation} 

Recall that for $v \in V$ and $t \in \Z_+$ we define $\W_v(t)$ as the set of walkers occupying $v$ at time $t$. For an oriented edge (possibly a loop)  $e=(e^{-},e^+)$  let \[\mathcal{W}_{e}(t):=\W_{e^-}(t) \cap \W_{e^+}(t+1)\]  be the collection of all walkers whose positions at times $t$ and $t+1$ are $e^-$ and $e^+$, respectively. Clearly, it suffices to describe the construction of  $\gamma,\gamma' $ only until the first $k$ for which $\gamma_k=\gamma_k'$.
We define $\gamma,\gamma'$ inductively as follows.   
Assume that $(\gamma_i,\gamma_i')_{i=0}^{k-1}$ and some collection of oriented edges   $e_1,f_1,\ldots,e_{k-1},f_{k-1}$, have already been defined and that for all $1 \le i < k$ in the $i$-th step of the construction we first define $e_i $, then (as described in (3) below) expose a certain set of walkers $\mathcal{A}_i \subseteq \mathcal{W}_{e_{i}}(\ell+i-1) $ and define $\gamma_i$ (as described below in (2)), after which we define $f_i$, expose a set of walkers $\mathcal{B}_i \subseteq \mathcal{W}_{f_{i}}(\ell+i-1)$ and finally define $\gamma_i' $, such that the following hold (the construction is described only in (2) and (3), while (4)-(5) are included as part of the induction hypothesis only for the purpose of facilitating the induction step):
\begin{itemize}
\item[(1)]  $\gamma_i \neq \gamma_i'$ for all $i<k$ (otherwise, the construction is concluded before stage $k$).
\item[(2)] For all $1 \le i<k$, the edge $e_i=(e_i^{-},e_i^+) $ is some oriented edge in $G$  of the form $e_i=(\gamma_{i-1},v_i)$  satisfying that $d(v_{i},\gamma'_{i-1})=d(\gamma_{i-1},\gamma'_{i-1})-1$ (\textit{i.e.}, $v_i$ is some neighbor of $\gamma_{i-1}$ which is closer to $\gamma'_{i-1}$ than $\gamma_{i-1}$ is). The sets 
$\mathcal{A}_0,\mathcal{B}_0,\ldots,\mathcal{A}_{i-1},\mathcal{B}_{i-1} $ have already been defined, as described in (3) below.   The set $\mathcal{A}_i$ is then defined inductively in a manner described in (3) below so that given $|\mathcal{A}_0|,|\mathcal{B}_0|,|\ldots|,|\mathcal{A}_{i-1}|,|\mathcal{B}_{i-1}| $, $e_1,f_1,\ldots,e_{i-1},f_{i-1} $ and $(\gamma_j,\gamma_{j} ')_{j=0}^{i-1} $ 
\begin{equation}
 \label{e:RiAi}
 \mathcal{A}_i \subseteq \mathcal{R}_i:=\mathcal{W}_{e_{i}}(\ell+i-1) \setminus \bigcup_{j=0}^{i-1}\mathcal{(A}_j \cup \mathcal{B}_j) \quad \text{and} \quad |\mathcal{A}_i | \sim \Pois(\alpha) . \end{equation} If $|\mathcal{A}_{i}| \ge 1 $,  we set $\gamma_i=v_{i}=e_i^+ $. Otherwise, we define $\gamma_i$ to be some vertex occupied at time $\ell +i $ by some walker in $\cup_{j=0}^{i-1}\mathcal{A}_j $ of minimal distance from $\gamma_{i-1}' $.

 Similarly, after defining $\gamma_i$, we set $f_i=(f_i^-,f_i^+)$ to be of the form $f_i=(\gamma_{i-1}',u_i)$ satisfying that $d(\gamma_{i},u_{i})=d(\gamma_{i},\gamma'_{i-1})-1  $ if $\gamma_i \neq \gamma_{i-1}'$; otherwise, we set $u_i:=\gamma_i$.  As before,   we then define the set $\BB_i$  inductively in a manner described in (3) below so that given $|\mathcal{A}_0|,|\mathcal{B}_0|,|\ldots|,|\mathcal{A}_{i-1}|,|\mathcal{B}_{i-1}| ,|\mathcal{A}_{i}|$, $e_1,f_1,\ldots,e_{i-1},f_{i-1},e_i, \gamma_i $ and $(\gamma_j,\gamma_{j} ')_{j=0}^{i-1} $     
\begin{equation}
 \label{e:RiAi2} \mathcal{B}_i \subseteq \mathcal{R}_i':= \W_{f_i}(\ell +i-1) \setminus  \bigcup_{j=0}^{i-1} \mathcal{(A}_{j} \cup \mathcal{B}_{j})  \quad \text{and} \quad |\mathcal{B}_i | \sim \Pois(\alpha) .
\end{equation} 
If
$|\mathcal{B}_{i}| \ge 1 $, we set $\gamma_i'=u_{i}=f_i^+ $. Otherwise, we define  $\gamma_i'$ to be some vertex occupied at time $\ell +i $ by some walker in $\cup_{j=0}^{i-1}\mathcal{B}_j $ of minimal distance from $\gamma_{i} $. 
\item[(3)] The sets 
$\mathcal{A}_1,\mathcal{B}_1,\ldots,\mathcal{A}_{k-1},\mathcal{B}_{k-1} $ are all disjoint and their sizes are $\iid$ $\Pois(\alpha)$. Denote
\begin{equation}
\label{eq: betaiy}
\begin{split}
&\beta_i(y):=\sum_{j=1}^{i-1} \left( \mathbb{E}_{\la}[|\mathcal{A}_{j}|]P^{i-j-1}(e_{j}^+,y)+\mathbb{E}_{\la}[|\mathcal{B}_{j}|]P^{i-j-1}(f_{j}^+,y) \right) \\ & =\alpha \sum_{j=0}^{i-2} \left( P^j(e_{i-j-1}^+,y)+P^j(f_{i-j-1}^+,y) \right) \le 2\alpha \sum_{j \ge 0}\rho^j =2 \alpha/(1-\rho).  
\end{split}
\end{equation}
Let  $\mathcal{R}_i$ and $\mathcal{R}'_i$ be as in \eqref{e:RiAi}-\eqref{e:RiAi2}. Let $\kappa_t(\cdot)$ and $\beta_i(\cdot)$ be as in \eqref{e:kappata} and \eqref{eq: betaiy}.  Then, for all $i<k$ given $e_1,f_1,\ldots,e_{i-1},f_{i-1} $, we have that  $|\mathcal{R}_{i-1}|$ and $|\mathcal{R}'_{i-1}| $ are independent Poisson r.v.'s, 
\begin{equation}
\label{eq: RiRi'}
\begin{split}
& \mathbb{E}[|\mathcal{R}_i| \mid e_1,f_1,\ldots,e_{i-1},f_{i-1},e_{i}]=P(e_{i}^-,e_{i}^+)[\kappa_{i+\ell-1}(e_i^-)- \beta_i(e_i^-)] \\ & \ge \sfrac{1}{d+1}(\la' - 2\alpha/(1-\rho) ) \ge \alpha, \\ & \mathbb{E}[|\mathcal{R}_i'| \mid e_1,f_1,\ldots,e_{i},f_{i}]=P(f_{i}^-,f_{i}^+)[\kappa_{i+\ell-1}(f_i^-)- \beta_i(f_i^-)] \ge \alpha.
\end{split}
\end{equation}
 For all $i<k$, the set $\mathcal{A}_i$  (respectively, $\mathcal{B}_i$) is a random subset of $\mathcal{R}_i$ (respectively, $\mathcal{R}'_i$) obtained from it by including in $\mathcal{A}_i$ (respectively, $\mathcal{B}_i$) every element of $\mathcal{R}_i $ (respectively, $\mathcal{R}_i'$) independently w.p.~$p_i:=\alpha/\mathbb{E}[|\mathcal{R}_i| \mid e_1,f_1,\ldots,e_{i-1},f_{i-1},e_{i}  ]$ (respectively, $p_i':=\alpha/\mathbb{E}[|\mathcal{R}_i'| \mid e_1,f_1,\ldots,e_{i},f_{i} ]$). Note that by \eqref{eq: RiRi'} $p_i,p_i' \le 1$.  
\item[(4)]
For $i < k$ and every walk $\w:= (\w_0,\ldots,\w_{\ell +i})$ with $\w_0 \notin \{u,v\} $, given $e_1,f_1,\ldots,$ $e_{i-1},f_{i-1} $,   the number  $Q_{\w}$  of walkers not  belonging to $\bigcup_{j=0}^{i-1}(\mathcal{A}_j \cup \mathcal{B}_j)$ which performed the walk $\w$ has a Poisson distribution.\footnote{ The exact expression for the mean shall not be used in what comes. It  is given by $\la p(\w)\prod_{j \in I_{\w}}(1-p_j)\prod_{j' \in J_{\w}}(1-p_{j'}')$, where $p_j$ and $p_j'$ are as in (3) and $p(\w)$ is as in \S\ref{s:revthin}, and where  \[I_{\w}:=\{ 1 \le j < i:(\w_{\ell+j-1},\w_{\ell+j}) =e_j \}\,  \text{ and }\] \[  J_{\w}:=\{ 1 \le j < i:(\w_{\ell+j-1},\w_{\ell+j}) =f_j \}.\] .} Moreover, for each fixed $i<k$, given $e_1,f_1,\ldots,e_{i-1},f_{i-1} $, the $Q_{\w}$'s (where $\w$ is as above, of length $\ell+i$).

\item[(5)] Consequently, for all $i \le k$ the number $U_y^{i}$  of walkers not  belonging to $\bigcup_{j=0}^{i-1}\mathcal{(A}_j \cup \mathcal{B}_j)$ which are at vertex $y$ at time $\ell+i-1$, has a Poisson distribution (by (4)) with mean  $\kappa_{i+\ell-1}(y)- \beta_i(y)$. Finally, for each fixed $i \le k$ we have that $(U_y^i)_{y \in V}$ are mutually independent.
\end{itemize}
In order to define $e_k,\mathcal{A}_k, \gamma_k,f_k,\mathcal{B}_k, \gamma_k'$ (in this order) we apply steps (2) and (3) with $k$ in the role of $i$. It is not hard to see that by the induction hypotheses (3)-(5) together with Poisson thinning and \eqref{eq: RiRi'} (with $i=k$),  this extends the construction by one step so that (2)-(5) remain valid for $k+1$ in the role of $k$. We leave the details to the reader.

 Note that  $d(\gamma_{k},\gamma'_{k-1}) - d(\gamma_{k-1},\gamma'_{k-1}) \le 1  $ and that also $d(\gamma_{k},\gamma'_{k}) - d(\gamma_{k},\gamma'_{k-1}) \le 1$. By step (2) the first increment equals $-1$ w.p.~at least $ \mathbb{P}(\Pois(\alpha)>0)=1-e^{-\alpha}>7/8$  and the same holds for the second increment, unless $\gamma_k=\gamma_{k-1}' $, in which case the second increment equals 0 w.p.~at least $1-e^{-\alpha}>7/8$. Thus $\gamma$ and $\gamma'$ are indeed biased towards each other as desired. 
\medskip

We now consider the case of holding probability $1/2$. We explain the necessary adaptations leaving some of the details to the reader. Set $\la=\frac{20 \log d}{1-\rho_{1/2}} $. As before let $\gamma_0 $ and $\gamma_0'$ be the positions of $w_v$ and $w_u$ at time $\ell$, respectively, where $\ell$ is so that the distribution of the number of walkers, other than $w_u$ and $w_v$, at the different vertices of $G$ at time $\ell$ stochastically dominates that of $\iid$ $\Pois(\la')$ for some $\la'> \frac{20 \log d}{1-\rho_{1/2}} -1$.   

 Assume that for some collection of oriented edges   $e_1,f_1,\ldots,e_{k-1},f_{k-1}$ the sequence $(\gamma_i,\gamma_i')_{i=0}^{k-1}$ has been defined and that in the $i$-th step of the construction we exposed sets of walkers \[\mathcal{A}_i \subseteq \mathcal{W}_{e_{i}}(\ell+i-1),\] \[\mathcal{C}_i \subseteq \mathcal{W}_{(\gamma_{i-1},\gamma_{i-1})}(\ell+i-1) \mathcal{=W}_{\gamma_{i-1}}(\ell+i-1) \cap \mathcal{W}_{\gamma_{i-1}}(\ell+i) ,\]  \[\mathcal{B}_i \subseteq \mathcal{W}_{f_{i}}(\ell+i-1) \quad \text{and} \]  \[\mathcal{D}_i \subseteq \mathcal{W}_{(\gamma'_{i-1},\gamma'_{i-1})}(\ell+i-1)  = \mathcal{W}_{\gamma'_{i-1}}(\ell+i-1) \cap \mathcal{W}_{\gamma'_{i-1}}(\ell+i) ,\] so that 
\begin{itemize}
\item[(i)]  $\mathcal{A}_1,\mathcal{B}_1,\mathcal{C}_1,\mathcal{D}_1, \ldots,\mathcal{A}_{k-1},\mathcal{B}_{k-1},\mathcal{C}_{k-1},\mathcal{D}_{k-1} $ are all disjoint; 
\item[(ii)]  $|\mathcal{A}_1|,|\mathcal{B}_1|,\ldots,|\mathcal{A}_{k-1}|,|\mathcal{B}_{k-1}| $ are $\iid$ $\Pois(2d^{-1}\log d )$;  \item[(iii)]  $|\mathcal{C}_1|,|\mathcal{D}_1|,\ldots,|\mathcal{C}_{k-1}|,|\mathcal{D}_{k-1}| $ are $\iid$ $\Pois(4\log d )$ and \item[(iv)]   $\mathcal{|A}_1|,|\mathcal{B}_1|,|\mathcal{C}_1|,|\mathcal{D}_1|, \ldots,\mathcal{|A}_{k-1}|,|\mathcal{B}_{k-1}|,|\mathcal{C}_{k-1}|,|\mathcal{D}_{k-1}| $  are independent. 
\end{itemize}
We set $e_k=(\gamma_{k-1},v_k)$ to be some oriented edge in $G$ so that $d(v_{k},\gamma'_{k-1})=d(\gamma_{k-1},\gamma'_{k-1})-1  $ and expose a subset  $\mathcal{A}_k$ of  $\mathcal{W}_{e_{k}}(\ell+k)$ and a subset $\mathcal{C}_k$ of $\mathcal{W}_{\gamma_{k-1}}(\ell+k-1) \cap \mathcal{W}_{\gamma_{k-1}}(\ell+k)$,  disjoint of the previously exposed sets of walkers, so that $|\mathcal{A}_k| \sim \Pois(2d^{-1}\log d ) $ and  $|\mathcal{C}_k| \sim \Pois(4 \log d) $. A similar calculation as in the case of holding probability $1/(d+1)$ shows that one can construct such $(\mathcal{A}_k,\mathcal{C}_k)$. We defer the calculation to the end of the proof, as to not disrupt the flow of the argument. 

If $|\mathcal{A}_{k}|>0 $ we set $\gamma_k=v_k$. If  $|\mathcal{A}_{k}|=0 $ but  $|\mathcal{C}_{k}|>0 $ we set $\gamma_k=\gamma_{k-1}$. If $|\mathcal{A}_{k}|=0=|\mathcal{C}_{k}|$, we define $\gamma_k$ to be some vertex occupied at time $\ell +k $ by some walker in $\cup_{i=0}^{k-1}\mathcal{(A}_i \cup \mathcal{C}_i) $ of minimal distance from $\gamma_{k-1}' $. 

We define $\mathcal{B}_{k},\mathcal{D}_{k} $ and $\gamma'_k$ in an analogous manner (with $\gamma_k$ here taking the role of $\gamma'_{k-1}$ in the construction of  $\mathcal{A}_{k},\mathcal{C}_{k} $ and $\gamma_k$). Finally, note that each of the increments $d(\gamma_{k},\gamma'_{k-1}) - d(\gamma_{k-1},\gamma'_{k-1})   $ and $d(\gamma_{k},\gamma'_{k}) - d(\gamma_{k},\gamma'_{k-1}) $ is in $\{0,\pm 1 \}$ and has mean at most
\[-1 \times \Pl[|\mathcal{A}_{k}| \ge 1]+1 \times \Pl[|\mathcal{A}_{k}| =0]\Pl[|\mathcal{C}_{k}| = 0]  \]
\[=- \Pr[\Pois(2d^{-1}\log d) \ge 1]+ \Pr[\Pois(2d^{-1}\log d) = 0]\Pr[\Pois(4\log d) = 0]  \]
 \[=-(1-e^{-2d^{-1}\log d})+e^{-2d^{-1}\log d}e^{-4\log d} <-d^{-1}\log d+d^{-4}<0.\] To conclude the proof we now provide a sketch proof for the existence of  $(\mathcal{A}_k,\mathcal{C}_k)$ and  $(\mathcal{B}_k,\mathcal{D}_k)$ as above. The key calculation is that by induction, given    $e_1,f_1,\ldots,e_{k-1},f_{k-1}$, for all $i<k$ the loss to the expected number of particles at  $\gamma_{k-1}$ at time $k-1+\ell $  due to the fact we are not counting particles from $\AAA_i,\BB_i,\C_i,\DD_i $ is respectively, $2d^{-1}\log d  \times P^{k-i}(e_i^+,\gamma_{k-1}) $, $4 \log d  \times P^{k-i}(e_i^-,\gamma_{k-1})$, $2d^{-1}\log d  \times P^{k-i}(f_i^+,\gamma_{k-1}) $ and $4 \log d  \times P^{k-i}(f_i^-,\gamma_{k-1})$. Summing over these  four sets and over $i<k$, the total contribution is at most \[(8 \log d+4d^{-1}\log d)/(1-\rho_{1/2} ). \]
Thus  given    $e_1,f_1,\ldots,e_{k-1},f_{k-1}$, the the number of walkers at  $\gamma_{k-1}$ at time $k-1+\ell$ which do not belong to either of the sets  $\AAA_1,\BB_1,\C_1,\DD_1,\ldots  \AAA_{k-1},\BB_{k-1},\C_{k-1},\DD_{k-1}$ has a Poisson distribution with mean at least $\la'- (8 \log d+4d^{-1}\log d)/(1-\rho_{1/2} ) \ge 8 \log d  $. The existence of $(\AAA_k,\C_k) $ now follows from Poisson thinning. The proof of the existence of $(\BB_k,\DD_k) $ is analogous.  \qed   

\section{The $d$-regular tree - Proof of Theorem \ref{thm: darytree}.}
\label{s:tree}
Let us first explain the main idea behind the proof of Theorem \ref{thm: darytree}. As explained below, the lower bound on $\lac$ follows from Theorem \ref{thm: easylower}. So our goal is to sketch the proof that for some $C,p>0$, when $\la \ge C  \sqrt{d}$ we have that for all $u,v \in V $ we have that $u \siminfty v $ w.p.\ at least $p$. We now sketch a  construction from which we deduce  that with positive probability there are infinitely many times $t$ at which  $u $ is visited  by some walker $w$ which is at time $t$ in the friend cluster of some walker in $\W_u$, and $t$ is the first time that $w$ visits $u$. 

With slightly more care, in the proof below we manage to perform a small modification of the construction, and deduce that in fact w.p.\ at least $p$ there are infinitely many times $t$ as above at which we have that in addition    $v $ is visited  by some walker $w'$ which is at time $t$ in the friend cluster of some walker in $\W_v$, and $t$ is the first time that $w'$ visits $v$. Clearly, on this event $\as$ $u \siminfty v $ (as at each such time $t$ we get two new walkers at $u$ and $v$, and these pair of walkers have some probability of meeting each other).     

For simplicity assume that $d:=2 \ell + 1$ is odd and that $\ell \ge 2$. Set $u$ as the root of $\T_d$. We say that a child of $u$ is a left child if it is one of the $\ell+1$ leftmost children of $u$ and otherwise it is a right child. Similarly, for $z \neq u$ we say that a child of $z$ is a left child if it is one of the $\ell$ leftmost children of $z$ and otherwise it is a right child. Let $\TT $ be the induced tree on $u$ and the vertices which are right children and the path between them and $u$ contains only right children (apart from $u$). 

Observe that $\TT $ is an $\ell$-ary tree. For every site $z$ in $\TT$ we may look at the subtree $\TT_z$ containing $z$, its left children and all of their descendants. The number of walkers whose initial position lie in $\TT_z$ to reach $z$ for the first time at some time $t$, denoted by $Z_z(t)$,  can be shown to have a Poisson distribution with parameter at least $c \la$. Moreover,  for different times we have independence, by Poisson thinning. Moreover, as the trees $\TT_z $ are disjoint for different $z$'s in $\TT$ we see that $\mathbf{Z}_{z}:=(Z_z(t):t \in \Z_+) $ are independent for different $z$'s. (This follows from the requirement that the initial position of the walker is in $\TT_z$.)

Now, one scenario in which $u$ is occupied at time $2t$ by a walker belonging at time $2t$ to the friend cluster some walker in $\W_u$ is that for some path $(u_0=u,u_1,\ldots,u_t)$ in $\TT$ we have that for all $i \in \{0,\ldots,t-1 \}$,  $Z_{u_i}(i)>0 $ and one of the corresponding walkers moved from $u_i$ to $u_{i+1}$ at time $i+1$, while  for all $i \in \{1,\ldots,t \}$,  $Z_{u_i}(2t-i)>0 $ and one of the corresponding walkers moved from $u_i$ to $u_{i-1}$ at time $2t-i+1$.

As on each edge $\{u_i,u_{i+1} \}$ we have two independent requirements, each occurring w.p.\ at least $\Pr[\Pois(c \la/(d+1) )>0] $, we get that if $\la  \ge C \sqrt{d}$ for some sufficiently large $C$, then we can lower-bound the  probability that such a path  $(u_0=u,u_1,\ldots,u_t)$ as in the previous paragraph exists in $\TT$, by the probability of the event that the cluster of  $u$ in a Bernoulli bond percolation on $\TT$ (which is an $\ell$-ary tree) with parameter, say $2/(\ell-1)$, contains some vertex at distance $t$ from $u$. This probability is at least the probability that $u$ is in an infinite open cluster, which is positive. 

The difficulty is that we seek to argue that with positive this happens for infinitely many $t$'s. However, this strengthening of the previous conclusion requires only a few simple observations concerning Bernoulli percolation on trees, which we defer for the proof of Theorem \ref{thm: darytree}.    

\emph{Proof of Theorem \ref{thm: darytree}.}
The lower bound on $\lac$ follows from Theorem \ref{thm: easylower} and the fact that the spectral radius of SRW on $\mathbb{T}_d$ is $\rho(\mathbb{T}_d)= \frac{2 \sqrt{d-1}}{d}$ (cf.~\cite[Theorem 6.10]{lyonsbook}) and so by \eqref{e:rhop} the spectral radius of LSRW with holding probability $1/(d+1)$ on $\mathbb{T}_d$ is $\frac{1}{d+1}+\frac{d}{d+1}\frac{2 \sqrt{d-1}}{d} $. We now prove the upper bound. By Theorem \ref{thm: mainupper} we may assume that $d \ge 4$. Fix some $u,v \in V$. We shall show that if $\la \ge C \sqrt{d} $ for some absolute constant $C$ to be determined later, then $\Pr_{\la}[\fc(u)=\fc(v) \mid u,v \in \Xi ]>c_{1}$ for some constant $c_{1}=c_{1}(d)>0$ independent of $(u,v)$.

Throughout the proof we condition on the event that $u,v \in \Xi$. We now set $u$ to be the root of $\T_d$. This induces a partial order $\le$, where $a \le b$ iff the path from $b$ to $u$ goes through $a$. The children of $a \in V$ are $\{b:d(a,b)=1, a \le b  \}=\{b \sim a :d(b,u)=d(a,u)+ 1   \}$ (where $d(\bullet ,\bullet)$ is the graph distance). Denote $\ell := \lfloor (d-1)/2 \rfloor  $. 

For each $a \in V $ we distinguish between its  $\ell$ leftmost children, denoted by $L_a$, and its $d-\ell-1$ rightmost children $R_a$ (apart from $a=u$, for which $R_{u}$ is taken to be the $d-\ell $ rightmost children of $u$). Let $\mathcal{R}$ be the collection of all vertices such that the path between them and $u$ is contained in $\{u\} \cup (\cup_{a \in V}R_a)  $. By symmetry, we may assume that $v \in \RR$.

 For each $a \in \mathcal{R} $ we denote by $\TT_{a,L} $ the tree rooted at $a$ with vertex set \[V_a:=\{a\} \cup (\cup_{b' \in L_a} \{b : b' \le b  \})\] (where $\TT_{a,L}$ is the induced graph on this set; in other words, this is the tree containing $a$ and its left children, along with all of their descendants). For each $a \in \RR $ and $t \ge 1$ let $\W_{a,L}(t)$ be the set of walkers whose initial position is in $V_a \setminus \{a\} $ that reached $a$ for the first time at time $t$. Set $\W_{a,L}(0):=\W_a$. For $a \in \RR$ and $b \sim a$ let $\W_{(a,b),L}(t)$ be the set of walkers in $\W_{a,L}(t)$ whose location at time $t+1$ is $b$ (\textit{i.e.}, this is the set of walkers whose initial position is in $V_a \setminus \{a\} $,  who reached $a$ for the first time at time $t$ and moved to $b$ in their next step). As $\TT_{a,L}$ and $\TT_{a',L}$ are disjoint for all $a \neq a' \in \RR$ we have that $(\W_{(a,b),L}(t))_{a,b,t:\, a \in \RR, b \sim a ,t \in \Z_+}$ are disjoint.  Hence by Poisson thinning the following holds:
\begin{itemize}
\item[(1)]
 $(|\W_{(a,b),L}(t)|)_{a,b,t:\,a \in \RR, b \sim a ,t \in \Z_+}$ are independent and for each fixed $t$, we have that $(|\W_{(a,b),L}(t)|)_{a,b:\, a \in \RR,b \sim a } $ are $\iid$ $\Pois(\alpha_t)$, where by reversibility (used in the second equality) \[\alpha_t(d+1)/\la=\sum_{b \in  V_{a}}\mathbb{P}_b(T_{a}=t)= \mathbb{ P}_a(S_1,\ldots, S_t \in V_{a}\setminus \{a\}), \] where $(S_k)_{k \ge 0}$ is a LSRW (with holding probability $\sfrac{1}{d+1}$), $T_{a}:=\inf\{s:S_s=a \}$ is the hitting time of $a$ and $\mathbb{P}_b$ denotes the law of a LSRW (with holding probability $\sfrac{1}{d+1}$) started from $b$.
\end{itemize}
Thus if $C$ is taken to be sufficiently large we get that
\begin{equation}
\label{eq: alphat}
\alpha_t\ge \sfrac{\la }{d+1} \mathbb{P}_a(\{S_k:k \ge 1 \} \subseteq V_{a}\setminus \{a\})> |\log \left(1- 2/\sqrt{d - \ell -1} \right)|,
\end{equation}
where we have used the fact that $\mathbb{P}_a(\{S_k:k \ge 1 \} \subseteq V_{a}\setminus \{a\})$ is bounded from below, uniformly in $d$ and that $\ell=\lfloor (d-1)/2 \rfloor $ and so $ |\log \left(1- 2/\sqrt{d - \ell -1} \right)| \le C_0 /\sqrt{d}$.

Denote by $\TT_{u}=(U_{u},E_{u})$ and $\TT_v=(U_{v},E_{v})$ the induced graphs on 
\begin{equation}
\label{e:Uuv}
U_{u}:=\{a \in \RR:v  \nleqslant\ a \} \quad \text{and} \quad U_v:=\{a \in \RR:v \le a \},
\end{equation}
respectively.
Crucially, by construction, $U_u$ and $U_v$ are disjoint.

 For each $s \ge 0$, $a \in \RR$ and $a' \sim a$ we denote by $J_{(a,a')}(s)$ the indicator of $|\W_{(a,a'),L}(s) |>0$. Note that by \eqref{eq: alphat}
\begin{equation}
\label{eq:p1}
p:= \inf_{t > 0} (1-e^{-\alpha_t}) \ge 2/\sqrt{d-\ell -1}.
\end{equation}
By (1) we have that
\begin{itemize}
\item[(2)] The joint distribution of $(J_{(a,a')}(s))_{a,a',s:\, a \in \RR,a' \sim a,s \ge 0}$ stochastically dominates that of independent Bernoulli$(p) $ random variables.
\end{itemize} 

We say that $u$ (respectively, $v$) is \emph{good} at time $2t$ if there exists some path $(\gamma_0=u,\gamma_1,\ldots,\gamma_t) $ in $\TT_u  $ (respectively, $(\gamma_0=v,\gamma_1,\ldots,\gamma_t)$  in $\TT_v $) such that both $J_{(\gamma_i,\gamma_{i+1})}(i)=1$ and $J_{(\gamma_{i+1},\gamma_{i})}(2t-i-1)=1$, for all $0 \le i \le t-1$. We denote the indicator of $u$ (respectively, $v$) being good at time $2t$ by $Z_u(2t)$ (respectively, $Z_v(2t)$). Note that if  $u$ (respectively, $v$) is good at time $2t$ then there is some walker $w \in \fc_{2t}(u)$ (respectively, $\fc_{2t}(v)$) which reached $u$ (respectively, $v$) for the first time at time $2t$. Thus on the event that both $u$ and $v$ are good (simultaneously) for infinitely many even times, we get that $\as$ $\fc(u)=\fc(v) $. Hence in order to conclude the proof, it suffices to show that  $u$ and $v$ are good (simultaneously) for infinitely many even times with probability at least $q>0$, for some $q$ independent of $u,v$. We do so by comparison with super-critical Bernoulli bond percolation (on $\TT_v$ and $\TT_u$) which we now define.

Bernoulli bond percolation on a graph $H=(U,F)$ with density $q$ is a random graph $H_q:=(U,F_q)$ such that $F_q \subseteq F$ is defined by including in it every edge $f \in F$ independently w.p.~$q$.
Let \[p_c(H):=\inf \{q: H_q \text{ has an infinite connected component with positive probability} \}\] be the critical density for Bernoulli bond percolation on $H$. Then \[p_c(\TT_{u})=1/(d -\ell-1)=p_c(\TT_{v}).\]   
Moreover, for all $q>p_c(\TT_{u})$ we have that $\as$ $(\TT_u)_q$ satisfies that \[p_c((\TT_u)_q)=p_c(\TT_{u})/q =p_c((\TT_v)_{q})\] (where $(\TT_u)_q$ is the graph obtained from Bernoulli bond percolation with density $q$ on $\TT_u $). In fact, for every $q>\sqrt{ p_c(\TT_u)}$ we have that
\begin{itemize}
\item[(3)]
The connected component  $C_{(\TT_u)_q}(u) $ of $u$ in $ (\TT_u)_q $ is infinite with positive probability.
\item[(4)] Let $\bar \TT:=((\TT_u)_q)_q$ be the graph obtained by Bernoulli bond percolation with density $q$ on $(\TT_u)_q$. Given that $|C_{(\TT_u)_q}(u)|=\infty$, the connected component  $C_{\bar \TT}(u) $ of $u$ in $\bar \TT$  is infinite with positive probability.
\item[(5)] There exist $\beta,\delta>0 $ such that w.p.~at least $\beta$ over the choice of $(\TT_u)_q$, the graph  $(\TT_u)_q$ satisfies that $|C_{\bar \TT}(u)|=\infty $ w.p.~at least $\delta$, conditioned on $(\TT_u)_q $.  
\end{itemize}
The same applies for $\TT_v$ (with $v$ in the role of $u$ above).
Note that by \eqref{eq:p1}
\begin{equation}
\label{eq:p}
p>\sqrt{ p_c(\TT_u)}.
\end{equation}
 Let $b \in U_{u} $ (respectively, $\in U_v $) and $b' \in R_b$, where $U_{u} $ and $U_{v} $ are as in   \eqref{e:Uuv}. Denote the distance of $b$ from $u$ (respectively, $v$) by $r$. We say that the edge $\{b,b'\}$ is \emph{forward good} if $J_{(b,b')}(r)=1$ and that it is \emph{backwards good for time} $2t$ (for $t > r$) if $J_{(b',b)}(2t-r-1)=1 $. This gives raise to the following random subgraphs of $\TT_{u}=(U_u,E_u)$ and $\TT_{v}=(U_v,E_v) $: 

 Let $\tilde \TT_{u} $ (respectively, $\tilde \TT_v $) be a graph with vertex set $U_u$ (respectively, $U_v$) and edge set \[\tilde E_u:=\{e \in E_u : e \text{ is forward good} \} \quad \text{(resp. }\tilde E_v:=\{e \in E_v : e \text{ is forward good} \} \text{)}.\] Let $\tilde \TT_{u,t}:=(U_{u}, \tilde E_{u,t}) $ be the random subgraph of $\tilde \TT_u $, defined by setting $\tilde E_{u,t}$ to be the collection of all $e \in E_{u,t} $ which are backwards good for time $2t$, where $E_{u,t}$ is the set of edges in $\TT_u $ having both end-points within distance $t$ from $u$. Define $\tilde \TT_{v,t}:=(U_v,\tilde E_{u,t}) $ in an analogous manner. Note that:
\begin{itemize}
\item[(6)]  $\tilde \TT_u$ and $\tilde \TT_v$ are independent (as $U_v \cap U_v = \eset$) and (by (2))   $\tilde \TT_u $ (respectively, $\tilde \TT_v $) stochastically dominates Bernoulli bond percolation on $\TT_u$ (respectively, $\TT_v$) with parameter $p$, where $p$ is as in \eqref{eq:p1}. 
\item[(7)] The collection of random forests $(\tilde \TT_{w,t} )_{w,t:\, w \in \{u,v\},t \ge 1}$ are conditionally mutually independent, given $(\tilde \TT_u,\tilde \TT_v)$ (this follows from (2)). 
\item[(8)] Given $ \tilde E_u \cap E_{u,t} $, the joint law of $(1_{e \in \tilde E_{u,t}})_{e \in \tilde E_u \cap E_{u,t}  }$ stochastically dominates that of i.i.d.~Bernoulli $p$ random variables, where $p$ is as in \eqref{eq:p1} (this follows from (2)).
\end{itemize}
\medskip

We say that    $\tilde \TT_u $ (respectively, $\tilde \TT_v $) is $\delta$-\emph{excellent} if the connected component of $u$ (respectively, $v$) in     $\tilde \TT_u $ (respectively, $\tilde \TT_v $) is infinite and the probability that the connected component of $u$ (respectively, $v$) in a Bernoulli bond percolation on  $\tilde \TT_u $ (respectively, $\tilde \TT_v $) with parameter $p$ is infinite is at least $\delta$. Note that by (6), the event that     $\tilde \TT_{u} $ is $\delta$-excellent is independent of the event that $\tilde \TT_v $ is  $\delta$-excellent. By (3)-(6)  and \eqref{eq:p} there exist some $\beta,\delta>0$ (independent of $(u,v)$) so that $\tilde \TT_u $ and $\tilde \TT_v $ are both  $\delta$-\emph{excellent} with probability at least $\beta $.  
\medskip

By (7)-(8), conditioned on      $\tilde \TT_{u} $ and ~$\tilde \TT_v $ both being $\delta$-excellent, the conditional joint distribution of $(Z_{w}(2t))_{w,t:\, w \in \{u,v \},t>0}$ stochastically dominates that of $\iid$ Bernoulli($\delta$) r.v.'s, and so by the Borel-Cantelli Lemma indeed $\as$ $Z_{u}(2t)=1=Z_{v}(2t)$ for infinitely many $t$'s, as desired. 

Indeed, by (7) it suffices to show that $\Pr_{\la}[Z_{w}(2t)=1 \mid \tilde \TT_{w} \text{ is }\delta \text{-excellent} ] \ge \delta $,  for each $w \in \{u,v \}$ and $t>0$. By (8), for each fixed $t$, (given $\tilde \TT_u$) the (conditional) probability that $u$ is connected in $\tilde \TT_{u,t} $ to some vertex of distance $t$ from it (\textit{i.e.}, that $Z_{u}(2t)=1$) is at least the probability that the connected component of $u$ in $(\tilde \TT_u)_p $ is infinite, which by definition of the notion of  $\delta$-excellence  is at least $\delta$, given that $\tilde \TT_u$ is $\delta$-excellent (an analogous statement holds for $v$).    
\qed
\section{Proof of Theorem \ref{thm: infcluster}}
\label{s:thm3}
Before turning to the proof of Theorem \ref{thm: infcluster} let us explain our strategy. Consider the following naive exploration process. Expose the first $t$ steps of some walker $w \in \W_v$ for some $v \in V$. Let $\mathcal{G}_1$ be the  set of walkers that $w$ met by time $t$. Pick  $t=t(\la,\rho)$ so that the expectation of $|\mathcal{G}_1|$ is at least some large constant $L$ (uniformly in $v$). Then sequentially expose the first $t$ steps of the walks performed by the walkers in $\mathcal{G}_1$ and let $\mathcal{G}_2$ be the collection of walkers not in $\mathcal{G}_1 \cup \{w\} $ which met some walker from $\mathcal{G}_1$ by time $t$. Inductively, let $\mathcal{G}_{k+1}$ be the collection of walkers not in $(\cup_{i=1}^k \mathcal{G}_i) \cup \{w\} $ which met some walker from $\mathcal{G}_k$ by time $t$. 

\medskip

The problem with this naive approach is that it is not clear that for large $k$, ``typically": for $w' \in \mathcal{G}_k$ (or even for at least some fixed small fraction of $w' \in \mathcal{G}_k$) we have that the expectation of the contribution of $w'$ to $|\mathcal{G}_{k+1}|$ is large, because the contribution is restricted to walkers not in  $(\cup_{i=1}^k \mathcal{G}_i) \cup \{w\} $ (plus we need to avoid double-counting contributions of different walkers in  $\mathcal{G}_k$, corresponding to the case that two or more walkers in  $\mathcal{G}_k$   discover the same walker). 
However, as follows from our analysis below, if $\rho$ is sufficiently small (some precise version of) the statement of the previous sentence indeed holds. 

Below we consider ``$s$-walks" (defined by looking at a walk only at times which are multiples of $s$ for some sufficiently large $s=s(\rho)$) in order to obtain walks with sufficiently small spectral radius. Instead of the aforementioned naive aforementioned exploration process, we work with a variant of an exploration process due to Benjamini, Nachmias and Peres \cite{benjamini} which allows us to perform effectively the bookkeeping of which ``active but still unchecked" walkers (\textit{i.e.}, walkers already recruited to the exploration process, such that the $s$-walk performed by them is yet unexposed)  are likely to recruit ``many" new walkers to the exploration process. 
\begin{proof}
 Recall that $t_{C,\la}:=\lceil \frac{C}{\la(1-\rho)} \rceil $, where $\rho$ is the spectral radius of LSRW on $G=(V,E)$. Fix some $v \in V$. By a standard use of Kolmogorov's 0-1 law, it suffices to show that $\Pl[|\fc_{t_{C,\la}}(v)|=\infty]>0 $, provided that $C$ is sufficiently large. In particular, we may condition on $v \in \Xi$. Denote \[s:=\lceil8 K/(1-\rho) \rceil \quad \text{and} \quad M:=\lceil 32K/\lambda \rceil, \quad \text{where } K \ge 3 \] shall be determined later.  Consider the random walk obtained by replacing the transition kernel $P$ by $Q:=P^{s}$ (\textit{i.e.}, every step of this walk is $s$ steps of the original LSRW). We refer to such walks as $s$\emph{-walks} and  denote it by $(S_t^{(s)})_{t \ge 0}$ and the corresponding probability measure (for initial state $u$) by $\Pr_{u}^{(s)}$ (similarly, when the initial distribution of the walk is $\mu$ we write $\Pr_{\mu}^{(s)}$).

Our strategy is to expose a subset of $\fc_{sM}(v)$ via a variant of an exploration process due to Benjamini, Nachmias and Peres \cite{benjamini}. Recall that $\W_u(t) $ is the set of walkers which are at vertex $u$ at time $t$.  Our exploration process produces increasing sets of space-time coordinates $\{\AAA_{\ell} \}_{\ell \ge 0}$, which are subsets of $V \times \{st: 0 \le t \le M \}$ so that for all $\ell$ and all $(u,st) \in \AAA_{\ell} $ we have that $\W_u(st) \subseteq \fc_{sM}(v)$. Start with $\AAA_0:=\{(v,0) \}$. We  proceed by exposing the first $sM$ steps of the walk $(\mathbf{w}_v(i) )_{0 \le i \le s M}$  performed by some walker in $\W_v$ and set \[\mathcal{A}_1:= \{\mathbf{(w}_v(ts),ts):0 \le t \le M ,\, \mathbf{w}_v(ts) \notin \{\mathbf{w}_v(t's):t' <t \} \}\] (in simple words, these are the space time co-ordinates of the first $M$ steps of the corresponding $s$-walk, after we omit repetitions in the space co-ordinate),
$\C_1:=\{(v,0) \}$ and $\mathcal{U}_1:=\mathcal{A}_1 \setminus \C_1$. We will construct inductively sets $\mathcal{U}_{\ell},\C_{\ell}$,   $\AAA_{\ell}:=\mathcal{U}_{\ell}\cup \C_{\ell} $  and
\begin{equation}
\label{eq: Aell}
A_{\ell}:=\{u : (u,st) \in \AAA_{\ell} \text{ for some }t \} \end{equation}
such that $\AAA_1 \subseteq \AAA_2 \subseteq \cdots$. 
To avoid double-counting (which may arise since $\W_u(st)  $ and $\W_{u'}(st')$ need not be disjoint), we consider certain subsets of the $\W_u(st) $'s. Set \begin{equation}
\label{eq: Wuell}
\W_{u}^{\ell}(st):=\W_u(st) \setminus \bigcup_{a \in A_{\ell},t' \in \Z_{+}:\, (a,t') \neq (u,t) }\W_{a}(st').
\end{equation}
That is $\W_{u}^{\ell}(st)$ is the collection of walkers occupying $u$ at time $st$ which avoid $A_{\ell}$ throughout their $s$-walks, apart from at time $t$ of the $s$-walk (that is, they did not visit any $a \in A_{\ell} $ at any time in $s \Z_+ \setminus \{st \} $, where $s \Z_+:=\{sz:z \in \Z_+ \}$).
 
From the construction below it will be clear that for all $\ell$ \[|\AAA_{\ell}|=|A_{\ell}| \quad \text{and} \quad |\C_{\ell}|=\ell . \]  At each stage $\ell$ some of $(u,st) \in \AAA_{\ell}$ will be \emph{checked}, $\C_{\ell} $, and some \emph{unchecked}, $\mathcal{U}_{\ell} $. As long as $\mathcal{U}_{\ell} $ is non-empty we can proceed with the $(\ell+1)$-th stage, in which we pick some $(u,st) \in \mathcal{U}_{\ell}$ (the manner in which we choose $(u,st)$ shall be described later) and first expose $|\W_{u}^{\ell}(st) |$ and set  $\mathcal{C}_{\ell+1}:=\mathcal{C}_{\ell} \cup \{ (u,st)\} $. If $|\W_{u}^{\ell}(st) |=0 $ we set  $\mathcal{U}_{\ell+1}:= \mathcal{U}_{\ell} \setminus \{(u,st) \} $.   Otherwise, we pick one walker $w$ from $\W_{u}^{\ell}(st)$ and expose its walk by time $sM$, $(\mathbf{w}(i))_{0 \le i \le sM} $ and set  $$\mathcal{U}_{\ell+1}:=( \mathcal{U}_{\ell} \cup \{(\mathbf{w}(is),is): 0 \le i \le M,\, i \neq t,\, \mathbf{w}(is) \notin \{\mathbf{w}(js):j < i \}  \} )\setminus \{(u,st) \}$$
(in simple words, we add to  $\mathcal{U}_{\ell}$ some of the space time co-ordinates of the first $M$ steps of the $s$-walk of $w$, where we avoid taking more than one pair with the same space co-ordinate, and then subtract from it $\{(u,st) \}$).  
We conclude the step by setting $\AAA_{\ell+1}:= \mathcal{C}_{\ell+1} \cup \mathcal{U}_{\ell+1}$. To motivate what comes, assume for the moment that we can pick $(u,st) \in \mathcal{U}_{\ell}$ such that
\begin{equation}
\label{eq: goodust}
\Pr^{(s)}[ \forall \, t' \neq t, \quad S_{t'}^{(s)} \notin A_{\ell} \mid S_t^{(s)}=u ] \ge 1-2e^{-4K}.
\end{equation}
From the analysis below and Poisson thinning, it follows that for such $(u,st)$ we have that \[|\W_{u}^{\ell}(st)|>0  \text{ w.p.~at least }q:= 1-\exp [-\lambda(1-2e^{-4K})] \text{ and} \] \[\mathbb{E}[|\mathcal{U}_{\ell+1}|-| \mathcal{U}_{\ell} | \mid \AAA_{\ell},|\W_{u}^{\ell}(st)| ] \ge M/4 \text{ on the event } |\W_{u}^{\ell}(st)|>0 \] (it equals $-1$ on the complement). As $\la \in (0,1]$,  provided that $K$ is sufficiently large, in such stage  \[\mathbb{E}[|\mathcal{U}_{\ell+1}|- \mathcal{|U}_{\ell} |] \ge q M/4 -(1-q)\ge 8q  K/\lambda  -1\ge 4K .\] If we could always pick such $(u,st)$, then it is intuitively clear that with positive probability $|\mathcal{A}_{\ell}| \ge 2\ell $ for all $\ell$ and thus the construction will have infinitely many stages, implying the desired result. As we now explain, at least a $(1-e^{-4K}) $-fraction of $(u,st) \in \AAA_{\ell}$ satisfy \eqref{eq: goodust}, and thus as long as  $|\mathcal{A}_{\ell}| \ge 2\ell $, we will indeed be able to choose  $(u,st) \in \mathcal{U}_{\ell}$ satisfying \eqref{eq: goodust}. 
\medskip

Following \cite{benjamini}, given some $A \subseteq V$ and $\alpha \in (0,1) $ we say that $a \in V$ is $(A,\alpha)$-\emph{good} if $\Pr_{a}^{(s)}[T_A^+<\infty] \le \alpha$, where $T_A^+:=\inf \{t>0:S_t^{(s)} \in A \} $.
Denote the uniform distribution on $A$ by $\pi_A$. As the spectral radius of $Q=P^{s}$ is $\rho^s \le e^{-8K}$, it follows from Lemma 2.1 in \cite{benjamini} that for every finite $A \subset V $
\begin{equation}
\label{eq: alphagood0}
\Pr_{\pi_{A}}^{(s)}[T_A^+< \infty] \le \rho^s  \le e^{-8K}. 
\end{equation}
It follows from \eqref{eq: alphagood0} that for every finite $A \subset V $, the set \[G_{A}:=\{a \in A: a \text{ is }(A,e^{-4K})\text{-good} \}\] satisfies that 
\begin{equation}
\label{eq: alphagood}
|G_{A}|/|A| \ge 1-e^{-4K}. 
\end{equation}
 Fix some $a \in G_A$ and $k \le M  $. Let \[\W_{a}(A,ks):=\W_a(ks)\setminus \cup_{\ell \in \Z_+,\, a' \in A:\,(\ell,a')\neq (k,a) }\W_{a'}(\ell s) \] be the collection of walkers which are at vertex $a$ at time $ks$, which avoid $A$ throughout their $s$-walks, apart from at time $ks$ (time $k$ of their $s$-walk). Note that when we take $A=A_{\ell}$ and $a \in A_{\ell} $, we have that $\W_{a}(A,ks)=\W_{u}^{\ell}(ks) $ (where $A_{\ell}$ and $\W_u^\ell(ks)$ are as in \eqref{eq: Aell}-\eqref{eq: Wuell}). This  allows us to translate the conclusion below into one concerning \eqref{eq: goodust}.

Observe that by reversibility if  $a \in G_{A}$ and $\mathbf{(w}(t))_{t \ge 0} $ is the walk performed by some walker $w \in \W_{a}(A,ks)  $, then the walks $\mathbf{(w}_{\mathrm{forward}}(t))_{t \in \Z_+}:= \mathbf{(w}((k+t)s))_{t \in \Z_{+}}$ and $\mathbf{(w}_{\mathrm{backward}}(t))_{0 \le t \le k}:=\mathbf{(w}((k-t)s))_{0 \le t \le k} $ are (independent) $s$-walks conditioned to avoid $A$, apart from at time 0. In particular,  \eqref{eq: goodust} holds for $A$ in the role of $A_{\ell}$ as  $a \in G_A$. Again using $a \in G_A$ we have that  \[\mathbb{E}[|\W_{a}(A,ks)|] \ge \mathbb{E}[|\W_{a}(ks)|](1-2\Pr_{a}^{(s)}[T_A^+<\infty]) \] \[ \ge  (1-2e^{-4K})\mathbb{E}[|\W_{a}(ks)|]=\la (1-2e^{-4K}) .\]
By Poisson thinning if $a \in G_{A}$, then for all $k$ we have that $|\W_{a}(A,ks)| $ has a Poisson distribution with mean at least $\la (1-2e^{-4K}) $.

Using \eqref{eq: spectralxy} it is not hard to show that the expected number of times an $s$-walk of length at most $M$ intersects itself is at most $M \rho^s/(1-\rho^s) \le \sfrac{9}{10} M e^{-8K} \le e^{-4K}/\la$, provided that $K$ is sufficiently large. Thus by Markov's inequality, if   $a \in G_{A}$ and $\mathbf{(w}(t))_{t \ge 0} $ is the walk performed by some walker $w \in \W_{a}(A,ks)  $ for some $k \le M$, then $\mathbf{(w}(ts))_{t:\, 0 \le t \le M,\, t \neq k}$ visits at least $M/4 \ge 2K/ \la $ distinct vertices with probability at least $p:=1-\frac{1}{Ke^{4K}(1-2e^{-4K}) } $.\footnote{The term $1-2e^{-4K}$ in the denominator is there since instead of taking $w \in \W_a(ks)$ we take $w \in \W_a(A,ks)$, which means that the law of its walk is conditioned to be in some set of walks whose probability (w.r.t.~the law of a walk of a walker in $\W_a(ks)$) is at least  $1-2e^{-4K}$.}

Let \[U_{\ell}:=\{u :(u,st) \in \mathcal{U}_{\ell} \text{ for some }t  \} \quad \text{and} \quad C_{\ell}:=\{u :(u,st) \in \mathcal{C}_{\ell} \text{ for some }t  \}.\] Assume that $|A_{\ell}| \ge 2 \ell$. Then $|G_{A_{\ell}}| \ge (1-e^{-4K})2 \ell > \ell $ and so $G_{A_{\ell}}\setminus C_{\ell}=G_{A_{\ell}}\cap U_\ell$ is non-empty (as $|C_{\ell}|=\ell$). As long as this is the case, in the $\ell$-th stage we expose some $(u,st) \in \mathcal{U}_{\ell} $ such that $u \in G_{A_{\ell}}\cap U_\ell  $, where the choice of $(u,st)  $  is made according to some prescribed order on $V \times \Z_+ $ (or simply according to the lexicographic order on the stage in which the walkers were discovered and their time coordinate). By the above analysis, provided that $K$ is sufficiently large, the probability that $|A_{\ell+1}|-|A_{\ell}| \ge 2K/ \la  $ is  at least $ qp=  (1-\exp [- \la(1-2e^{-4K})]) \times (1-\frac{1}{Ke^{4K}(1-2e^{-4K}) })  \ge \la/2 $ (for $\la \le 1$ and large $K$),   and so $$\mathbb{E}[|A_{\ell+1}|-|A_{\ell}| \mid |A_{\ell}| \ge 2 \ell ] \ge (2K/ \la)pq \ge K \ge 3 .$$
Combining this with Azuma inequality (applied to the Doob's martingale of $(|A_{\ell}|)_{\ell \ge 0}$), it is not hard to verify that with positive probability $|A_{\ell}| \ge 2 \ell $ for all $\ell$ (cf.~the proof of Theorem 1.1 in \cite{benjamini}) as desired.
\end{proof}
\section{A lower bound on $\lac$ in the non-amenable case}
\label{s:lowernonamen}  
\begin{theorem}
\label{thm: easylower}
 Let $G=(V,E)$ be an infinite connected non-amenable regular graph. Denote the spectral radius of LSRW on $G$ (with an arbitrary holding probability $p$) by $\rho$. Then the SN model on $G$ with holding probability $p$ satisfies
\begin{equation*}
\Pr_{\lambda}[\mathbf{Con}]=0, \quad \text{ for all } \lambda < \sfrac{1}{2}\left(\rho^{-1}-1 \right).
\end{equation*}
\end{theorem}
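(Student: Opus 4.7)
\emph{Proof proposal.} By Proposition~\ref{prop: criticalintro} it suffices to show $\la<\lac(G)$ when $\la<\tfrac{1}{2}(\rho^{-1}-1)$. By the definition of $\lac$ and the elementary bound $\Pr_\la[u\siminfty v \mid u,v\in\Xi]\le \Pr_\la[u\siminfty v]/(1-e^{-\la})^2$, it is enough to show that $\Pr_\la[u\siminfty v]\to 0$ as $d(u,v)\to\infty$.

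\emph{Chain expansion via minimal chains.} If $u\siminfty v$ then there is a chain $c_0\in\W_u,c_1,\dots,c_k,c_{k+1}\in\W_v$ with $c_i,c_{i+1}$ meeting at some time $t_i$. Passing to a chain of minimal length forces $t_0>0$, $t_k>0$ and $t_i\ne t_{i-1}$ for $1\le i\le k$: were any of these violated, three consecutive walkers $c_{i-1},c_i,c_{i+1}$ would share a common space-time point, permitting removal of the middle walker. Setting $t_{-1}=t_{k+1}=0$ and $\xi_i:=t_i-t_{i-1}$, a minimal chain satisfies $\xi_i\ne 0$ for all $0\le i\le k+1$. Applying Fact~\ref{fact: thinning} and the Mecke formula for the Poisson process of walkers, the expected number of such chains with given times $(t_0,\dots,t_k)$ summed over intermediate starts equals $\la^{k+2}\sum_{a_1,\dots,a_k\in V}\Pr[\gamma^{(i)}_{t_i}=\gamma^{(i+1)}_{t_i}\ \forall i]$ for independent LSRWs $\gamma^{(0)},\dots,\gamma^{(k+1)}$ started at $u=a_0,a_1,\dots,a_k,a_{k+1}=v$. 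Using the Markov property of each walker together with $\sum_{a\in V}P^s(a,x)=1$ (reversibility/symmetry of $P$), summing out the intermediate starts telescopes into $\la^{k+2}P^T(u,v)$, where $T:=\sum_{i=0}^{k+1}|\xi_i|$ is the length of the bridge $0\to t_0\to\dots\to t_k\to 0$ in $\mathbb{Z}_+$.

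\emph{Sharp threshold and distance decay.} Using $P^T(u,v)\le\rho^T$ (item (3) of \S\ref{sec: gap}) and $P^T(u,v)=0$ for $T<d(u,v)$, a union bound yields
\[
 \Pr_\la[u\siminfty v]\le\sum_{k\ge 0}\la^{k+2}\sum_{\substack{(\xi_0,\dots,\xi_{k+1})\in(\mathbb{Z}\setminus\{0\})^{k+2}\\ \sum_i\xi_i=0,\ \text{partial sums}\ge 0\\ \sum_i|\xi_i|\ge d(u,v)}}\rho^{\sum_i|\xi_i|}.
\]
The step generating function $h_*(x):=\sum_{\xi\ne 0}\rho^{|\xi|}x^{\xi}$ attains its maximum on $|x|=1$ at $x=1$, namely $h_*(1)=2\rho/(1-\rho)$. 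Dropping the non-negativity and the $T\ge d(u,v)$ constraints (a valid upper bound), the inner sum is at most $[x^0]h_*(x)^{k+2}\le h_*(1)^{k+2}=(2\rho/(1-\rho))^{k+2}$; hence $\Pr_\la[u\siminfty v]\le\sum_k(\la\cdot 2\rho/(1-\rho))^{k+2}$, which is finite precisely when $\la<(1-\rho)/(2\rho)=\tfrac{1}{2}(\rho^{-1}-1)$. Under this threshold the $d(u,v)=0$ sum is finite, so by dominated convergence the restricted sum with $\sum|\xi_i|\ge d(u,v)$ tends to $0$ as $d(u,v)\to\infty$, yielding the required distance decay.

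\emph{Main obstacle.} The crucial ingredient is the reduction to \emph{minimal} chains with $\xi_i\ne 0$ for all $i$; the naive chain expansion (allowing $\xi_i=0$) would use $\sum_{\xi\in\mathbb{Z}}\rho^{|\xi|}=(1+\rho)/(1-\rho)$ and give only the weaker threshold $\la<(1-\rho)/(1+\rho)$. The minimality argument --- that any non-minimal chain contains a triple simultaneous meeting so that the middle walker can be removed --- is elementary once noticed, but is precisely what produces the sharp constant $2\rho$ in the denominator of the theorem's threshold.
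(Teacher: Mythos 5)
Your proposal is correct, and it takes a genuinely different route from the paper. The paper proves Theorem \ref{thm: easylower} by exploring $\fc(o)$ and stochastically dominating the exploration by a lazy branching random walk with offspring law $1+2\Pois(\la)$ (Proposition \ref{prop: dominationbybrw}, whose bookkeeping is deferred to an appendix), then combining the fact that on $\con$ the vertex $o$ is visited infinitely often by walkers of $\fc(o)$ (via Lemma \ref{lem: infcollide}) with the first-moment bound $\mathbb{E}[Q_n(o)]\le[(1+2\la)\rho]^n$ for the branching walk (Lemma \ref{lem: triansienceofBRW}); the contradiction arises exactly when $(1+2\la)\rho<1$. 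You instead bound the two-point function $\Pr_{\la}[u\siminfty v]$ directly by a first-moment count of acquaintance chains with prescribed meeting times: the reduction to minimal chains (all increments $\xi_i\neq0$) is sound, the Mecke/thinning computation with $\sum_a P^s(a,x)=1$ and symmetry of $P$ does collapse the count to $\la^{k+2}P^T(u,v)\le\la^{k+2}\rho^T$, and the step generating function value $h_*(1)=2\rho/(1-\rho)$ plays precisely the role of the forward/backward ``clone'' in the paper's $2\Pois(\la)$ offspring law, so both arguments rest on the same time-reversal phenomenon and give the identical threshold $\la<\sfrac{1}{2}(\rho^{-1}-1)$. What your route buys: it avoids the cumbersome domination proposition entirely and yields quantitative decay of $\Pr_{\la}[u\siminfty v]$ in $d(u,v)$, hence $\lac\ge\sfrac{1}{2}(\rho^{-1}-1)$ directly, from which $\Pr_{\la}[\con]=0$ follows by Proposition \ref{prop: criticalintro}; what the paper's route buys is a domination of the entire friend cluster by a branching walk, a more structural statement. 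Two minor points to spell out in a polished write-up: in the boundary cases $t_0=0$ (or $t_k=0$) the minimality argument removes the endpoint walker $c_0$ (or $c_{k+1}$), using $u\neq v$, rather than a ``middle'' walker; and the conclusion should be phrased as ``the defining condition of $\lac$ fails for every $\la$ below the threshold, hence $\lac\ge\sfrac{1}{2}(\rho^{-1}-1)$,'' so that any $\la$ below the threshold is strictly below $\lac$.
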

Throughout the section we fix the holding probability of the walks to be some constant $0 \le p<1$. Let $\mu_{\lambda}$ (respectively, $\nu_{\la} $) be the distribution of $1+2X_{\la}$ (respectively, $1+X_{\la}$), where $X_{\la} \sim \Pois( \lambda ) $. A 
\emph{\emph{lazy branching random walk}}  on $G$ with offspring distribution $\mu_{\la}$ started at a vertex $o$, denoted by $\mathrm{LBRW}(\mu_{\la},o) $, is defined as follows. At time $0$ there are a random number of particles distributed according to $\nu_{\la}$ which are all positioned at vertex $o$. Call the set of these particles generation number 0. The process is then defined inductively. At stage $t$ each particle $w$ belonging to the $t$-th generation performs one step of LSRW on $G$ from its position at time $t$, where steps performed by different particles are independent. Then it gives birth to a random number of particles (referred to as its offsprings) $Y_{w} \sim \mu_{\la}$, at its current position, independently of all other particles. The set of all the offspring of the particles from the $t$-th generation is defined to be the $(t+1)$-th generation.

The following interpretation of $\mathrm{LBRW}(\mu_{\la},o)$ is useful for our purposes. First, by including the previous generations as part of the current generation, we may think of the offspring distribution as being the same as that of $2X_{\la}$, where $X_{\la} \sim \Pois( \lambda )$. Equivalently, in this interpretation, a particle does not ``die" after giving birth to some offspring at a certain step, and may give birth to additional offspring in future stages (alternatively, we may view the particle as an ``offspring of itself").

We may think of each particle as giving birth to $\Pois(\la)$ ``regular particles" which then clone themselves. By reversibility, we may think of the regular particles as performing independent LSRWs, while the clones perform a LSRW moving backwards in time in the following sense. The law of LSRW started from $v$ is the same as the law of $(Y_s)_{s=0}^{\infty} $, where $Y_s:=X_{-s}$ for all $s \ge 0 $ and $(X_{s})_{s \in \Z }$ is a bi-infinite LSRW conditioned on being at $v$ at time $0$. Hence we may assume the walk of the clone particle is sampled in that manner. 

We now describe a process which, based on the previous two observations, is essentially equivalent to  $\mathrm{LBRW}(\mu_{\la},o)$. In particular, the expected total number of visits to each vertex (including multiplicities) is the same for the  two processes. While the definition of this process is somewhat cumbersome, it will be transparent that this process stochastically dominates the exploration process used below in order to ``expose" $\fc(o)$, the friend cluster of $\W_o$. We intentionally use similar notation to describe this variant of   $\mathrm{LBRW}(\mu_{\la},o)$ as the one used later in the exploration process of  $\fc(o)$.  

\medskip

In the $0$-th generation, $\mathcal{V}_{0,0} $, we start with $1+\Pois(\la)$ walkers  $v_{0,0,1},\ldots,v_{0,0,|\mathcal{V}_{0,0} |}$ at $o$. Let each $v_{0,0,j}$ perform a $\Z$-indexed (bi-infinite) random walk  $(\mathbf{v}_{0,0,j}(t))_{t \in \Z }$ on $G$, conditioned to be at $o$ at time $0$. Such a walk can be sampled by taking two independent $\Z_+$-indexed walks started at $v$, $(\mathbf{fv}_{0,0,j}(t) )_{t \ge 0}$ and $(\mathbf{bv}_{0,0,j}(t) )_{t \ge 0}$ (which can be thought of as 2 independent walks performed by 2 separate particles) and concatenating one to the reversal of the other as follows $\mathbf{v}_{0,0,j}(t):=\mathbf{fv}_{0,0,j}(t) $ and $\mathbf{v}_{0,0,j}(-t):=\mathbf{bv}_{0,0,j}(t) $ for all $t \ge 0$.

In the first stage we expose $\mathbf{v}_{0,0,j}( \pm 1 )$ for all $j$ (in the above interpretation, we expose one step of the walk of the forward particle $\mathbf{fv}_{0,0,j}(1)$ and one of the backward
 particle $\mathbf{bv}_{0,0,j}(1)$) and plant at $\mathbf{v}_{0,0,j}( \pm 1 ) $ (independently for different $j$'s and for $\pm 1 $) $\Pois(\la)$ walkers performing (independent) $\Z$-indexed random walks on $G$ conditioned to be at $\mathbf{v}_{0,0,j}( \pm 1 )$ at time $\pm 1 $, respectively. Denote the set of walkers planted at stage 1 at time $\pm 1$ by $\mathcal{V}_{1,\pm 1}=\{v_{1,\pm 1,1},\ldots,v_{1,\pm 1, |\mathcal{V}_{1,\pm 1}|} \}$, respectively. The construction continues inductively as follows: 

By the end of stage $r$, for all $0 \le i \le r $ and $-i \le j \le i $ such that $i-j$ is even, we have already defined  $\mathcal{V}_{i, j}=\{v_{i,j,1},\ldots,v_{i,j, |\mathcal{V}_{i,j}|} \}$ the set of walkers planted at stage $i $ and time $j$, and  for all  $1 \le k \le  |\mathcal{V}_{i,j}| $ exposed $(\mathbf{v}_{i,j,k}(t ))_{t: |t-j| \le r-i}$, where $v_{i,j,k}$ is the $k$-th walker in $\mathcal{V}_{i, j}$ and $(\mathbf{v}_{i,j,k}(t ))_{t \in \Z} $ is the walk she performs. In the $(r+1$)-th stage we expose for all $i,j,k$ as before $\mathbf{v}_{i,j,k}(j \pm (r+1-i) )$ and plant at  $\mathbf{v}_{i,j,k}(j \pm (r+1-i) )$ (independently for different $(i,j,k)$'s and for $j \pm (r+1-i) $) $\Pois(\la)$ walkers performing (independent) $\Z$-labeled random walks on $G$ conditioned to be at $\mathbf{v}_{i,j,k}(j \pm (r+1-i) )$ at time $j \pm (r+1-i) $, respectively. Finally, we denote the set of walkers planted at stage $r+1$ at time $\ell $ by $\mathcal{V}_{r+1,\ell}=\{v_{r+1,\ell ,1},\ldots,v_{r+1,\ell, |\mathcal{V}_{r+1,\ell}|} \}$.     

\medskip

Below we expose  $\fc(o) $ in ``slow motion" using an exploration process. At each stage $t$ of the exploration process, new walkers are ``recruited" to the friend cluster by meeting at some time $s \le t$ walkers already belonging to the exploration process.  The walkers recruited at stage $t$ can be thought of as the $t$-th generation of the exploration process. 

Let $w$ be some walker in the $t$-th generation of the exploration process who was recruited at stage $t$ due to an acquaintance which occurred at time $s$ (the set of such walkers shall be denoted below by $\W_{t,s}$).
 Instead of exposing in the $(t+1)$-th stage the entire trajectory of $w$, we expose its position at times $s+1$ and $s-1$. At stage $t+2$ we expose its position at times $s+2$ and $s-2$ (if $s \ge 2$), and so on (at stage $t+i$ we expose its position at time $s+i$ and if $s \ge i$ also at time $s-i$). 

Let $(\mathbf{w}(n))_{n \ge 0}$ be the infinite walk performed by $w$. We can think of $w$ as two separate particles, one a forward particle performing the forward walk $(\mathbf{w}(t+n))_{n \ge 0}$ and the other a clone performing the reversed walk $(\mathbf{w}(t-n))_{n:0 \le n \le t}$. At each stage, for every previously exposed walker $w$ we expose one step of its forward walk and one step of its reversed walk (or in the above terminology, one step of the walk performed by its clone), if it was not fully exposed already. The particle (or clone) recruits new walks if she meets them at the space-time coordinate of her walk which was exposed at the current stage, and if those walkers avoided all the space-time coordinates previously exposed by the exploration process (otherwise these walkers would have already been recruited to the exploration process).

Using Poisson thinning, we can dominate this exploration process by the equivalent formulation of $\mathrm{LBRW}(\mu_{\la},o)$, involving the $\Z$-valued walks and the sets $\mathcal{V}_{t,s}$. Indeed there are two differences between the two. The first is that in the latter the walks of the particles moving backwards in time continue all the way to time $-\infty$  instead of stopping at time 0.   The second difference is that in the exploration process of $\fc(o)$ each particle can only recruit ``new" walkers (and their clones), which means that these walkers have to avoid certain space-time coordinates previously exposed by the exploration process. Thus, by Poisson thinning her offspring (= new walkers recruited by her at each stage and their clones) distribution is stochastically dominated by the $2\Pois(\la)$ distribution. 

 Unfortunately, while the aforementioned stochastic domination is intuitively clear,  its proof requires some cumbersome bookkeeping and no much additional insights beyond the ones described in the above intuitive explanation. For this reason we defer the proof of Proposition \ref{prop: dominationbybrw} to Appendix \ref{a:p8.2}.

By Lemma \ref{lem: infcollide} every vertex is visited infinitely often $\as$ Thus on the event $\con \cap \{o \in \Xi \} $ (assuming it has a positive probability) we have that $\fc(o)$ (the friend cluster of $\W_o$) is the set of all walkers, and so $o$ is visited by walkers in $\fc(o)$ infinitely often $\as$ 
Note that if $\Pr_{\lambda}[\mathbf{Con}]>0 $, then there must be some $o$ such that $\Pr_{\lambda}[ \con
\cap \{o \in \Xi \}]>0 $, and so the expected number of times in which vertex $o$ is visited by walkers from $  \fc(o)$ including multiplicities (here we count also visits made by a walker $w \in \fc(o) $ at time $t$ in which $w \notin \fc_t(o)$, \textit{i.e.}, before the walker $w$ joined the friend cluster of the walkers in $\W_o$) is  infinite, as on the event $ \con
\cap \{o \in \Xi \} $ the last expectation is simply the expected number of visits to $o$ by all particles (with multiplicities; The number of such visits is $\as$ infinite and so this expectation is infinite even on the event $\con
\cap \{o \in \Xi \} $). Hence the assertion of Theorem \ref{thm: easylower} follows by combining the following proposition and lemma.
\begin{proposition}[proof deferred to Appendix \ref{a:p8.2}]
\label{prop: dominationbybrw}
Let $X_{v}$ be the number of times  vertex $v$ was visited by a walker from $\fc(o)$ (including multiplicities) when the density of the walkers is taken to be $\la$.  Let $Y_{v}$ be the number of times that vertex $v$ was visited by a particle in  $\brw$ (where if a particle in the lazy branching random walk   $\brw$  is born at vertex $v$ this also contributes to $Y_v$). Let $\nu_1$ and $\nu_2$ be the  laws of $(X_v)_{v \in V }$ and $(Y_v)_{v \in V }$, respectively. Then $\nu_2$ stochastically dominates $\nu_1$.
\end{proposition}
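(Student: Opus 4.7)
The plan is to construct on a common probability space a deterministic coupling between the social network model with density $\la$ and the lazy branching random walk $\brw$, such that pointwise $X_v \le Y_v$ for every $v$ simultaneously. This is the rigorous form of the ``exploration versus branching walk'' correspondence sketched in the paragraphs immediately preceding the proposition; the two key ingredients are Fact~\ref{fact: thinning} (Poisson independence of walk-counts) and reversibility of LSRW on a regular graph.

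I would first formalize a breadth-first exploration of $\fc(o)$: initialize a queue with the pairs $(w,0)$ for $w\in \W_o$, and at each stage pop the oldest pair $(w,s)$ with current radius of exposure $r$, advance $w$'s trajectory by one step forward (revealing $\mathbf{w}(s+r+1)$) and, if $s-r-1\ge 0$, by one step backward (revealing $\mathbf{w}(s-r-1)$). Any walker $w'$ other than those already enrolled, whose trajectory passes through a newly revealed coordinate $(v,t)$ and has not previously intersected any exposed coordinate, is enrolled in the queue at $(v,t)$ as both a forward and a backward particle of radius $0$. By Fact~\ref{fact: thinning} combined with Poisson thinning, conditional on the past of the exploration the number of freshly recruited walkers at $(v,t)$ is $\Pois(\la')$ for some $\la'\le\la$; the inequality arises because we are excluding walkers already discovered through other space-time coordinates.

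On the same probability space I would build $\brw$ using the equivalent description given above the statement, in which each particle spawns $\Pois(\la)$ forward descendants running independent LSRWs, together with an independent bi-infinite backward clone that is, by reversibility, distributed as an independent LSRW run backwards in time. The coupling proceeds stage by stage: to the $\Pois(\la')$ thinned recruitment count above I adjoin an independent $\Pois(\la-\la')$ batch, obtaining a $\Pois(\la)$ variable that serves as the offspring count at the corresponding particle of $\brw$; each additional offspring is assigned a freshly sampled LSRW trajectory. Trajectories of genuinely recruited walkers coincide with those of the matched LBRW particles by construction. Since in the SN exploration every backward leg terminates at time $0$ whereas in $\brw$ each backward clone is driven by an infinite LSRW, the LBRW population at every space-time coordinate dominates the exploration's population, and summing over stages and over time gives $X_v\le Y_v$ for all $v$ almost surely.

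The principal obstacle is the bookkeeping, and I expect this to be the reason the authors defer the proof to an appendix. One must (i) fix a priority rule ensuring that a walker rediscovered at several coordinates during the exploration is attributed to a single LBRW particle, so that no visit is double-counted on the SN side but every visit is counted on the LBRW side; (ii) verify at each stage that the event ``walker not previously exposed'' is measurable with respect to the filtration generated by the revealed coordinates so that Poisson thinning applies cleanly; and (iii) check that the $\Pois(\la-\la')$ residual at each stage can genuinely be fed into fresh LBRW offspring without violating any earlier coupling decision. Once this bookkeeping is in place, stochastic domination of $\nu_1$ by $\nu_2$ follows by induction on the exploration stage from the standard monotone coupling of Poisson variables.
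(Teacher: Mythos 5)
Your proposal follows essentially the same route as the paper's deferred proof in Appendix \ref{a:p8.2}: the same stagewise exploration of $\fc(o)$ that exposes one forward and one backward step of each recruited walker per stage, Poisson thinning conditional on the previously exposed space-time coordinates, a lexicographic priority rule for attributing offspring, and topping up the thinned $\Pois(\la')$ recruitment to $\Pois(\la)$ --- the paper does this by planting, for each excluded path $\gamma$ through the newly exposed coordinate, an independent $\Pois(\la p(\gamma))$ batch of non-reproducing \emph{dummy particles}, which is the precise form your residual batch must take (the residual trajectories are drawn from the law of a walk through that coordinate conditioned to hit the exposed set, not unconditioned fresh walks, so that the superposition has exactly the LBRW offspring law). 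With that one precision, your coupling, combined with the observation that the SN backward legs terminate at time $0$ while the LBRW clones run forever, is exactly the paper's argument yielding $\W_{r,s}\subseteq\mathcal{V}_{r,s}$ and hence $X_v\le Y_v$ pointwise.
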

\begin{lemma}
\label{lem: triansienceofBRW}
For $v \in V$ and $n \ge 0$, let $Q_{n}(v)$ be the the number of particles belonging to the $n$-th generation of $\brw$ which were born at vertex $v$. Then for all $v$ and $n \ge 1 $,
\begin{equation}
\label{eq: transiencebrw1}
\mathbb{E}[Q_{n}(v)] = (1+\la)(1+ 2 \lambda )^{n-1}  P^{n}(o,v) \le  [(1+2
\lambda) \rho]^{n}.
\end{equation}
In particular, if $\la < \half ( \rho^{-1} -1) $ we have that $\sum_{n=0}^{\infty} Q_n(o)<\infty $ $\as$
\end{lemma}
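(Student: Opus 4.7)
The plan is to compute $\E[Q_n(v)]$ via the standard ``many-to-one'' identity for branching random walks, which decouples the spatial motion from the branching. Since the walk steps and the offspring numbers are independent across particles, a short induction on $n$ (or, equivalently, the observation that a uniformly chosen particle in generation $n$, followed along its ancestral line, performs an independent LSRW of length $n$ started at $o$) shows that for any nonnegative $f : V \to \R_+$,
\[
\E\left[\sum_{w \in \mathrm{gen}(n)} f(X_w)\right] \;=\; \E\bigl[|\mathrm{gen}(n)|\bigr]\cdot \E_o[f(S_n)],
\]
where $X_w$ denotes the birth position of the particle $w$ and $(S_k)_{k \ge 0}$ is a LSRW with kernel $P$ started at $o$. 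Applying this to $f = \1_{\{v\}}$ gives $\E[Q_n(v)] = \E[|\mathrm{gen}(n)|] \cdot P^n(o,v)$.

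Next I would compute the branching-only quantity $\E[|\mathrm{gen}(n)|]$ from the recursion. The initial expected size is $\E[\nu_\lambda] = 1+\lambda$, and each particle produces $\mu_\lambda$ offspring with $\E[\mu_\lambda] = 1+2\lambda$, so unwinding the recursion with the generation indexing used in the paper yields $\E[|\mathrm{gen}(n)|] = (1+\lambda)(1+2\lambda)^{n-1}$ for $n \ge 1$, which gives the stated equality in \eqref{eq: transiencebrw1}. The inequality is then immediate from $1+\lambda \le 1+2\lambda$ together with the uniform spectral bound $P^n(o,v) \le \rho^n$ recorded in \eqref{eq: spectralxy}.

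For the ``in particular'' claim, the hypothesis $\lambda < \tfrac{1}{2}(\rho^{-1}-1)$ is equivalent to $(1+2\lambda)\rho < 1$, so by Tonelli and a geometric series,
\[
\E\left[\sum_{n \ge 0} Q_n(o)\right] \;=\; \sum_{n \ge 0} \E[Q_n(o)] \;\le\; \sum_{n \ge 0}[(1+2\lambda)\rho]^n \;<\; \infty,
\]
and any nonnegative random variable with finite expectation is a.s.\ finite.

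There is no real obstacle here; the content of the lemma is just the many-to-one identity plus a geometric series. The only thing requiring care is the clean statement and proof of the many-to-one identity and the off-by-one bookkeeping of generations in the branching recursion, both of which are routine.
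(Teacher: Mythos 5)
Your proposal is correct and is essentially the paper's own argument: the paper proves the equality in \eqref{eq: transiencebrw1} by the same first-moment induction on $n$ over all vertices (your many-to-one identity is just that induction packaged so that the expected generation size and the kernel $P^{n}(o,v)$ decouple), and then concludes exactly as you do via the bound $P^{n}(o,v)\le \rho^{n}$ from \eqref{eq: spectralxy} and a geometric series together with ``finite mean implies $\as$ finite.'' The one step you gloss---extracting the factor $(1+\la)(1+2\la)^{n-1}$ from ``the generation indexing used in the paper''---is glossed by the paper as well (it omits the induction details), and any off-by-one in that bookkeeping only changes the bound by a constant factor, which is immaterial for the summability and for the $\as$ finiteness of $\sum_{n}Q_{n}(o)$.
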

The proof of the equality in (\ref{eq: transiencebrw1}) is obtained by a simple induction on $n$, performed simultaneously over all vertices (we omit the details). The inequality in (\ref{eq: transiencebrw1}) follows from (\ref{eq: spectralxy}). We note that it is shown in \cite{criticalBRW} that the critical mean offspring distribution for a branching random walk is  $1/\rho$ and that a critical branching random walk is transient (i.e., it $\as$ visits every vertex only finitely many times). Hence if $(1+2
\lambda) \rho=1$ then the $\brw$ is transient.

\section{Concluding remarks}
\label{s:concluding}

\subsection{Refined lower bound when the holding probability is 1/2}
\label{s:refined}
In this subsection we give a rough sketch of a proof of the following theorem. 
\begin{theorem}
\label{thm: hardlower}
 Let $G=(V,E)$ be an infinite connected non-amenable regular graph. Denote the spectral radius of SRW on $G$ by $\rho$. Then the SN model on $G$ with holding probability $1/2$ satisfies
\begin{equation}
\label{e:hardlower}
\Pr_{\lambda}[\mathbf{Con}]=0, \quad \text{ for all } \lambda \text{ such that } 1+2\la(1+2e^{\la/2} ) \le  \rho^{-1}.
\end{equation}
\end{theorem}
Note that while in \eqref{e:hardlower} we are considering the SN model  with holding probability $1/2$, the term $1/\rho$ is defined w.r.t.\ SRW. For instance, for the $d$-ary tree this shows that $\lac  \ge c \log d $ when the holding probability is $1/2$, whereas in this case Theorem \ref{thm: easylower} yields a weaker lower bound which does not diverge as $d \to \infty $. Combining Theorem \ref{thm: hardlower} with Theorem \ref{thm: mainupper} yields the following.

\begin{corollary}
There exist absolute constants $c,C>0$ such that for all $d \ge 3$ we have that $c \log d \le \lac(\T_d) \le C \log d $ for SN model with holding probability is $1/2$. The same holds for every connected infinite $d$-regular Ramanujan graph, with the same $c$ and $C$.      
\end{corollary}

The reason we provide here a much less  detailed analysis than in \S\ref{s:lowernonamen} (and Appendix \ref{a:p8.2}) is that the ideas here are extremely similar to those from  \S\ref{s:lowernonamen}. Like in Appendix \ref{a:p8.2}, in order to rigorously justify the claim that the below exploration process for $\fc(o)$  is indeed dominated by the branching random walk described below, one can introduce ``dummy particles". This is meant to justify the following fact that is used implicitly below: 
\begin{itemize}
\item The  number of walkers at vertex $v$ at time $t$ which avoid a certain collection of space-time co-ordinates $(u_1,t_1),\ldots,(u_r,t_r) $ (where $u_1,\ldots,u_r \in V$ and $t_1,\ldots,t_r \in \Z_+ $, possibly $t_i>t$ for some $i$'s) is independent of  $\mathcal{X}:=(|\W_{t_i}(u_i)|:i \in [r]) $, where $|\W_{t_i}(u_i)|$ is the number of walkers at vertex $u_i$ at time $t_i$.
\item Moreover, it  is  stochastically dominated by the Poisson($\la$) distribution.
\item Furthermore, for each path $(v_{0},v_{1},\ldots,v_{s})$  that avoids the above space time co-ordinates, in the sense that for all $i \le s$ we have that $v_{i} \notin \{u_k:t_k=i \} $,  we have that the number of walkers which performed this path is independent of $\mathcal{X}$.     
\end{itemize}
 However, in order to facilitates analysis analogous to the one of Appendix \ref{a:p8.2}, the notation and bookkeeping  required here are much more cumbersome compared to the already cumbersome notation from  \S\ref{s:lowernonamen}. For the sake of clarity of presentation, we chose to present the exploration process below using as little notation as possible, and to leave it to the reader to verify the details of the claimed stochastic domination.

The idea of the proof is to explore $\fc(o)$ in an ``ultra slowed down" fashion which exploits the laziness of the walks. The exploration process below is still be dominated by a branching random walk, but in a much less wasteful fashion than as  in the proof of Theorem \ref{thm: easylower}. 

Consider the case that a walker $x$ jumps at time $t_x$ to some site $v$ from some neighboring site $u$, and that $x$ left $v$ at time $t_x'+1$. The walkers $\W_{1} $ she met at $v$ during $[t_x,t_x']$   must all be in $\fc(x)$. Each walker $w \in \W_1 $ entered $v $ at some time $t_w$ and left it at time $t_{w}'+1$ such that $[t_w,t_w'] \cap[t_x,t_x'] = \eset $. Let $\W_2$ be the collection of walkers $z$ not belonging to $\{x\} \cup \W_1 $ such that they entered $v $ at some time $t_{z} \ge 0 $ and left at time $t_{z}'+1$ such that $[t_{z},t_{z}'] \cap (\bigcup_{w \in \W_1}[t_w,t_w'] ) \neq \eset $. 

We can continue defining $\W_i $'s in these fashion inductively until the first $i_0$ such that $\W_{i_0+1}=\eset$. Namely, if $\W_i \neq \eset$  let $\W_{i+1}$ be the collection of  walkers $z$ not belonging to $\{x\} \cup \bigcup_{j =1}^i \W_j $ such that they entered $v $ at some time $t_{z} \ge 0 $ and left at time $t_{z}'+1$ such that $[t_{z},t_{z}'] \cap(\bigcup_{w \in \W_i}(t_w,t_w']) $.  

Clearly, $\bigcup_{j=1}^{i_0}\W_j $ must all be in $\fc(x)$. For each walker   $w \in \bigcup_{j=1}^{i_0}\W_j $ we can now reveal (``backwards step") from what vertex did it jump to $v$ (provided $t_w>0$) and to which vertex it jumped to when leaving $v$ (``forward step"). Each such walker $w$ starts in its forward and backwards step a new process with the same description as $(\W_i)_{i=1}^{i_0}$ above. However, at each stage we wish to not count walkers already recruited to the exploration process at previous stages (or earlier on at the same stage).   

As in \S\ref{s:lowernonamen}, for each particle recruited to the exploration process we will expose at each stage its trajectory one step forward and one step backwards. However, one crucial difference is that now we reveal its \emph{non-lazy} trajectory. By this, we mean the following. The  non-lazy trajectory corresponding to a SRW trajectory $(u_0,u_1,\ldots)$ is obtained by deleting consecutive repetitions. That is, it is $(v_0,v_1,\ldots)$  where $v_i:=u_{\tau_i}$ and $\tau_i:=\inf \{j>\tau_{i-1} :u_{j} \neq u_{\tau_{i-1}} \} $. 

Let $x $ be a particle recruited to the exploration process at some stage $k$. Let $(v_0,v_1,\ldots)$ be its non-lazy trajectory. Assume that $w$ was recruited at location $v_m$ during the time interval $[\tau_m,\tau_{m+1}-1] $ (with $\tau_{\cdot}$ as above).    At a stage $i>k$ we reveal (forward step) $v_{m+i-k} $ and if $i-k \le m$ also  $v_{m-(i-k)} $ (backwards step). We can then define $\mathcal{U}_0 $ to be the collection of particles $y$ not previously recruited to the exploration process, that  jumped to $v_{m+i-k} $ at some time $t_y \ge 0$ and stayed there until time $t_y'+1 $ so that $[t_y,t_y'] \cap [\tau_{m+i-k},\tau_{m+i-k+1}-1] \neq \eset $.   Recruit the walkers from   $\mathcal{U}_0  $ to the exploration process.    Let  $\mathcal{U}_1  $ be the collection of walkers $y$ not previously recruited to the exploration process, who   jumped to $v_{m+i-k} $ at some time $t_y \ge 0$ and stayed there until time $t_y'+1 $ so that $[t_y,t_y'] \cap (\bigcup_{z \in \mathcal{U}_1} [t_{z},t_{z}']) \neq \eset $. Recruit the walkers from   $\mathcal{U}_1  $ to the exploration process. We can continue defining $\mathcal{U}_{j+1}$ inductively in this fashion as  long as $\mathcal{U}_j \neq \eset $. Let $j_0 $ be the minimal integer such that $\mathcal{U}_{j_0+1}=\eset $. Then the collection of particles recruited by $x$ at stage $i$ via its forward step is $\bigcup_{j=0}^{j_0}\mathcal{U}_j $.

We now define the collection of particles recruited by $x$ at stage $i$ via its backwards step. We assume that $i-k \le m$ as otherwise there is no such backwards step.  Let  $\mathcal{B}_0 $ to be the particles $y$ who  jumped to $v_{m-(i-k)} $ at some time $t_y \ge 0$ and stayed there until time $t_y'+1$ such that   $[t_y,t_y'] \cap [\tau_{m-(i-k)},\tau_{m-(i-k)+1}-1] \neq \eset $ and have not been previously recruited to the exploration process. Recruit the walkers from   $\mathcal{B}_0  $ to the exploration process. Let  $\mathcal{B}_1  $ be the collection of walkers $y$ not previously recruited to the exploration process, who   jumped to $v_{m-(i-k)} $ at some time $t_y \ge 0$ and stayed there until time $t_y'+1 $ so that $[t_y,t_y'] \cap (\bigcup_{z \in \mathcal{B}_1} [t_{z},t_{z}']) \neq \eset $. Recruit the walkers from   $\mathcal{B}_1  $ to the exploration process. We can continue defining $\mathcal{B}_{j+1}$ inductively in this fashion as  long as $\mathcal{B}_j \neq \eset $. Let $j_0' $ be the minimal integer such that $\mathcal{B}_{j'_0+1}=\eset $. Then the collection of particles recruited by $x$ at stage $i$ via its backwards step is $\bigcup_{j=0}^{j_0'}\mathcal{B}_j $.  

As in \S\ref{s:lowernonamen} at each stage we reveal the backwards and forward steps of all recruited particles sequentially according to some predetermined order. This affects the notion of ``not being previously recruited to the exploration process" used above (during each stage this notion is updated as the stage progresses). Moreover, in order to be at $\mathcal{U}_i$ (respectively, $\mathcal{B}_i$) we require a walker to not be in $\mathcal{U}_j$ (respectively, $\mathcal{B}_j$) for all $j<i$).

As mentioned above, we shall dominate this exploration process via a branching random walk. The offspring distribution of this branching random walk has the same law as $1+2W$, where $W$ has a rather complicated law we shall soon describe. The source of the $+1$ term and of the multiplicative term $2$ is exactly the same as in \S\ref{s:lowernonamen} (particles don't die explains the term  $+1$ , and the fact each paarticlllle progresses in both directions of time explains the term 2). We  seek to take the law of $W$ to be one which dominates the laws of  $\bigcup_{j=0}^{j_0}\mathcal{U}_j $ and  $\bigcup_{j=0}^{j_0'}\mathcal{B}_j $ described above.

To do so, it is useful  to describe the evolution of   $\mathcal{U}:=\bigcup_{j=0}^{j_0}\mathcal{U}_j $  one time unit at a time,  from $\tau_{m+i-k}$ to $\max_{w \in \mathcal{U} }t'_w $  (rather than one index at a time, from $\mathcal{U}_0$ to $\mathcal{U}_{j_0}$; a similar description applies to   $\bigcup_{j=0}^{j_0'}\mathcal{B}_j $). However, we also need to consider its evolution backwards in time (which takes place between time $\min_{w \in \mathcal{U} }t_w$ and $\tau_{m+i-k}$), as some walkers in $\mathcal{U}_0$ could have been at $v_{m+i-k}$ both at time  $\tau_{m+i-k}$ and at time  $\tau_{m+i-k}-1$.

 Moving forward in time, each particle stays in  $v_{m+i-k}$ with probability $1/2$. By Poisson thinning, the number of new (\textit{i.e.}, not previously recruited) particles to jump to   $v_{m+i-k}$ at each time is stochastically dominated by the $\Pois(\la/2) $ distribution. 

 For the evolution forward in time, we are interested in the number of the walkers recruited between time $\tau_{m+i-k}$ and $\max_{w \in \mathcal{U} }t'_w $. The latter is the first time $t > \tau_{m+i-k} $ at which no particles that were in  $v_{m+i-k}$ at time $t-1$ stayed at $v_{m+i-k}$ at time $t$. If we reverse time, the same description  is valid backwards in time - that is, provided some walkers in $\mathcal{U}_0$  were  at $v_{m+i-k}$ both at time  $\tau_{m+i-k}$ and at time  $\tau_{m+i-k}-1$,  we are looking at the maximal time $t < \tau_{m+i-k} $ at which there are no particles at $v_{m+i-k}$ that were there also at time $t+1 $.

 Consider the Markov chain $(X_t)_{t \ge 0 }$ that at time $t+1$ evolves to $X_{t+1}=Y_{t+1}+Z_{t+1} $, where $Z_{1},Z_2,\ldots$ are i.i.d.\ $\Pois(\la/2)$, and given $X_t$ we have that $Y_{t+1} $ has a $\mathrm{Bin}(X_t,1/2)$ distribution and is independent of $Z_{t+1}$. We extend this process to a bi-infinite process by setting for all $t \ge 0$, $X_{-t-1}=Y_{-t-1}+Z_{-t-1} $, where $Z_{-1},Z_{-2},\ldots$ are i.i.d.\ $\Pois(\la/2)$, and given $X_{-t}$ we have that $Y_{-t-1} $ has a $\mathrm{Bin}(X_{-t},1/2)$ distribution and is independent of $Z_{-t-1}$. Let \[\sigma=\inf \{t:Y_{t+1}=0 \} \quad \text{and} \quad  \sigma'=\inf \{t:Y_{-t-1}=0 \} .\] We consider the case that $X_0 \sim 1+ \Pois(\la) $. It is not hard to see that by reversibility we can take \[W=(X_{0}-1)+\sum_{i=1}^{\sigma}Z_i+\sum_{i=-\sigma'}^{-1}Z_i .\] 

By abuse of notation, if $\xi \sim \Pois(a)$ then we refer to the law of $1+\xi$ as $1+\Pois(a) $. Using similar reasoning as in footnote 1, we argue that given $\sigma>1 $, we have that $Y_1 $ is stochastically dominated by the law $1+ \Pois(\la/2)$.  Indeed, we may think of $Y_1$ as the number of successes in $X_0$ Bernoulli($1/2$) trials. We are interested in the conditional law of $Y_1$ given $Y_1>0$. If the first trial is a success, then the conditioning on $Y_1>0$ does not affects the number of successes in the remaining $ \Pois(\la)$ trials, and so by Poisson thinning the law of the number of additional successes has the $\Pois(\la/2)$ distribution. 

If the first trial is a failure, then the conditional law of the total number of success is, again by Poisson thinning, the law of a $\Pois(\la/2)$ random variable conditioned on being positive. As in footnote 1, by considering the number of arrivals in $[0,1] $ in a rate $\la$ Poisson process, and conditioning on the location of the first arrival, we see that the aforementioned law is stochastically dominated by the $1+\Pois(\la/2) $ distribution.     
 
 It follows by induction that given $\sigma>t $, we have that $X_t$ is stochastically dominated by the $1+\Pois(\la)$ distribution. Hence $\sigma $ is stochastically dominated by the Geometric distribution with parameter $1/(2e^{\la/2}) $ (which is the probability that  $1+\Pois(\la)$ independent Bernoulli($1/2$) trials all fail). Likewise, the same applies to $\sigma' $ by reversibility. By Wald's equation we have that $\mathbb{E}[W]=\la(1+2e^{\la/2} )$. As in \S\ref{s:lowernonamen}, the condition $1+2\mathbb{E}[W] \le  1/\rho $ implies the  branching (simple) random walk on $G$ with offspring distribution $1+2W$ is transient, which as in  \S\ref{s:lowernonamen}, can be used to argue that the above exploration process for $\fc(o)$ $\as$ does not visit all vertices. This concludes the sketch of the proof of Theorem \ref{thm: hardlower}.

\subsection{Improving the dependence on the distance of the spectral-radius from 1}
\label{s:improving}
As we now explain, with a bit more care, the terms $\frac{2}{1-\rho}  $ and  $\frac{20}{1-\rho_{1/2}}$ from Theorem \ref{thm: mainupper} can be replaced by  $\frac{c_{1}}{\sqrt{1-\rho}}  $ and  $\frac{c_{2}}{\sqrt{1-\rho_{1/2}}}$, respectively, for some  constants $c_1,c_2>0$.  Similarly, in Theorem \ref{thm: infcluster} we could have taken $t_{C,\la}$ to be $\lceil \frac{C}{\la \sqrt{1-\rho}} \rceil$, rather than $\lceil \frac{C}{\la (1-\rho)} \rceil$. 

Let $P$ be the transition kernel of SRW or lazy SRW with holding probability $p \le 1/2 $  on an infinite connected regular graph $G=(V,E)$. Let $\rho(P)$ be the spectral-radius of $P$. By inspecting the proofs of Theorem \ref{thm: mainupper} and Theorem  \ref{thm: infcluster}, such improvements can be derived from the estimate   
\begin{equation}
\label{e:improvement}
 \sum_{t=0}^{\infty} \sup_{x,y} P^{t}(x,y) \le C_0/ \sqrt{1-\rho(P)},
\end{equation}
 rather than the estimate     $\sum_{t=0}^{\infty} \sup_{x,y} P^{t}(x,y) \le C_1/ (1-\rho(P))$ that we use. 

Similarly to (3) from \S\ref{sec: gap}, for all $s,t \ge 0$ and all $x \in V $ we have that \[P^{2t+2s}(x,x)=\langle P^{2t+2s}1_x,1_x \rangle=\| P^{t+s}1_x\|_{2}^2 \le \rho(P)^{2s}\| P^{s}1_x\|_{2}^{2}=\rho(P)^{2s}P^{2t}(x,x).   \] 
 
    Combining the above with the fact that  $\max_{x,y} P^{t}(x,y) \le \sup_{x}P^{2 \lfloor t/2 \rfloor}(x,x)$ (Proposition \ref{p:decayofreturn}),  yields that  \[ \sum_{t=0}^{\infty} \sup_{x,y} P^{t}(x,y)\le 2 \sum_{t=0}^{\infty} \sup_{x} P^{2t}(x,x) \le \frac{2e}{e-1}  \sum_{t=0}^{\lceil 1/(1-\rho(P)) \rceil} \sup_x P^{2t}(x,x) .\] Finally, we obtain \eqref{e:improvement}  using the fact that there exists an absolute constant $C>0$ such that   $\sup_x P^{t}(x,x) \le \frac{C}{\sqrt{t+1}}$ for all $t$ (the same constant $C$ works for all $p \le 1/2$ and all graphs $G$ as above, \textit{e.g.},\ \cite{lyonsev} -- see the discussion in the proof of Lemma \ref{lem: regen}).

\section*{Acknowledgements}
We are grateful to Itai Benjamini for suggesting the problems studied in this paper and thank him and Gady Kozma for  helpful discussions.

...

\nocite{}
\bibliographystyle{plain}
\bibliography{SN}

\appendix

\section{Appendix A: Proof of the regeneration Lemma \& Lemma \ref{lem: infcollide}}
\label{a:regen}
\emph{Proof of Lemma \ref{lem: regen}.}
The independence and the fact that the marginal distributions are Poisson follow from Poisson thinning.  Denote $P(v,A):=\sum_{a\in A}P(v,a) $. By reversibility, $\mathbb{E}_{\la}[Y_{v, A}(t)]=\la P^{t}(v,A)$ and so $$\la- \mathbb{E}_{\la}[Y_{v,A^{\complement} }(t)]=\mathbb{E}_{\la}[Y_{v, A}(t)]=\la P^{t}(v,A) \le \la |A| \sup_{x,y \in V}P^t(x,y) \le C \la |A|/\sqrt{t}, $$ where we have used the fact that for all $t \ge 1 $,   $\sup_{x,y} P^{t}(x,y) \le \sup_x P^{2\lfloor t/2 \rfloor }(x,x)  $ (see Proposition \ref{p:decayofreturn} below) and  $\sup_{x} P^t(x,x) \le C/ \sqrt{t} $ (\textit{e.g.},\ \cite{lyonsev}, where this is proved for SRW and lazy SRW with holding probability 1/2 -- the case of any other holding probability bounded away from 1 can be deduced from the SRW case, by averaging over the number of lazy steps  the walk performs by time $t$ and using the concentration of the Binomial distribution around its mean. Indeed, if $P$ is SRW and $P_p$ is lazy SRW with holding probability $p$ then $P_p^t(x,x)=\sum_i \Pr[\mathrm{Bin}(t,p)=i]P^i(x,x)$). \qed
\begin{proposition}
\label{p:decayofreturn}
SRW on a  regular graph satisfies $\sup_{x,y}P^t(x,y) \le \sup_{x}P^{2 \lfloor \sfrac{t}{2} \rfloor}(x,x)$.
\end{proposition}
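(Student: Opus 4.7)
The plan is to exploit the fact that $G$ being regular makes $P$ symmetric with respect to the counting measure, hence $P(x,y)=P(y,x)$, and so $P^s(x,y)=P^s(y,x)$ for all $s\ge 0$. This symmetry lets one apply Cauchy--Schwarz to $P^{2s}$.

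First I would handle the even case $t=2s$. Writing
\[
P^{2s}(x,y)=\sum_{z\in V}P^s(x,z)P^s(z,y)=\sum_{z\in V}P^s(x,z)P^s(y,z),
\]
Cauchy--Schwarz gives
\[
P^{2s}(x,y)\le \Bigl(\sum_{z}P^s(x,z)^2\Bigr)^{1/2}\Bigl(\sum_{z}P^s(y,z)^2\Bigr)^{1/2}=\sqrt{P^{2s}(x,x)\,P^{2s}(y,y)},
\]
where the final identity uses symmetry once more (so $\sum_z P^s(x,z)^2=\sum_z P^s(x,z)P^s(z,x)=P^{2s}(x,x)$). The right-hand side is at most $\max(P^{2s}(x,x),P^{2s}(y,y))\le \sup_{w}P^{2s}(w,w)$, which is exactly $\sup_{w}P^{2\lfloor t/2\rfloor}(w,w)$ in this case.

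For the odd case $t=2s+1$, I would condition on the first step and reduce to the even case:
\[
P^{t}(x,y)=\sum_{z\in V}P(x,z)\,P^{2s}(z,y)\le \sup_{w}P^{2s}(w,w)\sum_{z}P(x,z)=\sup_{w}P^{2s}(w,w),
\]
using the even-case bound on each $P^{2s}(z,y)$ and that $P(x,\cdot)$ is a probability measure. Since $2s=2\lfloor t/2\rfloor$, this closes the odd case.

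There is no real obstacle: the statement is a direct consequence of Cauchy--Schwarz applied to the symmetric kernel $P^s$, combined with a one-step conditioning to handle the parity. The only point to be careful about is that symmetry of the kernel (not merely reversibility with respect to a general measure) is what lets us compare $\sum_z P^s(x,z)P^s(y,z)$ to the diagonal entries $P^{2s}(x,x)$ and $P^{2s}(y,y)$ without any extra $\pi$-weights, and this is precisely guaranteed by the regularity of $G$.
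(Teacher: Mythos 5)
Your proof is correct, and the even case is exactly the paper's argument: symmetry of $P$ (which regularity guarantees) plus Cauchy--Schwarz gives $P^{2s}(x,y)\le\sqrt{P^{2s}(x,x)P^{2s}(y,y)}\le\sup_w P^{2s}(w,w)$. The only divergence is in the odd case: you condition on the first step and apply the even-case bound to each $P^{2s}(z,y)$, using that $P(x,\cdot)$ is a probability measure, whereas the paper splits $2s+1=(s+1)+s$ and applies Cauchy--Schwarz asymmetrically to get $P^{2s+1}(x,y)\le\sqrt{P^{2s+2}(x,x)P^{2s}(y,y)}$, which then implicitly uses the standard monotonicity $P^{2s+2}(x,x)\le P^{2s}(x,x)$ (i.e.\ $\|P^{s+1}\1_x\|_2\le\|P^{s}\1_x\|_2$) to land on $\sup_w P^{2s}(w,w)$. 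Your variant avoids that extra (unstated) fact, so it is marginally more self-contained; both routes rest on the same key observation you correctly emphasize, namely that it is genuine symmetry of the kernel, not just reversibility with respect to some measure, that lets the off-diagonal entries be controlled by diagonal ones without weight factors.
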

\emph{Proof.} By reversibility (used in the second equality and to argue that $\sum_z [P^{t}(a,z)]^2=\sum_z P^{t}(a,z)P^{t}(z,a)=P^{2t}(a,a)$) and the Cauchy-Schwartz inequality (first inequality) 
\begin{equation}
\label{e:CSPxx}
\begin{split}
P^{2t}(x,y)&=\sum_z P^{t}(x,z)P^t(z,y)=\sum_z P^{t}(x,z)P^t(y,z) \\ & \le \sqrt{ P^{2t}(x,x)P^{2t}(y,y)}\le \sup_x P^{2t}(x,x). 
\\ \text{Similarly,} \quad  P^{2t+1}(x,y) & \le \sqrt{ P^{2t+2}(x,x)P^{2t}(y,y)} \le \sup_x P^{2t}(x,x). \quad \text{\qed}
\end{split}
\end{equation}
\emph{Proof of Lemma \ref{lem: infcollide}.}
The  fact that the  distribution of $N_t(\mathbf{w}) $ is Poisson follows from Poisson thinning. Let $M_t(\mathbf{w}):=\sum_{i=1}^t |\W_{\mathbf{w}_i}(i)| $.  By stationarity of the law of the occupation measure (Fact \ref{fact: thinning}) we have that $\mathbb{E}_{\la}[M_t(\mathbf{w})]=\la t$. Decomposing the last expectation according to the first time $i$ at which a walker is at $\mathbf{w}_i $ (and noting that  the contribution corresponding to  time $i$ is $\mathbb{(E}_{\la}[N_i(\mathbf{w})]-\mathbb{E}_{\la}[N_{i-1}(\mathbf{w})]) \sum_{j=0}^{t-i}P^j(\mathbf{w}_i,\mathbf{w}_{i+j})$, which can be bounded from above by $\mathbb{(E}_{\la}[N_i(\mathbf{w})]-\mathbb{E}_{\la}[N_{i-1}(\mathbf{w})])\sum_{j=0}^{t} \sup_{x,y \in V}P^j(x,y)$) we get that
\[\la t= \mathbb{E}_{\la}[M_t(\mathbf{w})] \le \mathbb{E}_{\la}[N_t(\mathbf{w})] \sum_{j=0}^{t} \sup_{x,y \in V}P^j(x,y) \le\mathbb{E}_{\la}[N_t(\mathbf{w})] C \sqrt{t}. \quad \text{\qed} \]    
\section{Appendix B: explicit construction of the SN model}
\label{A:construction}
\emph{Proof of Proposition \ref{prop: couplingforalllambda}.} For every $v \in V $ let $M_v(t)$ be a homogeneous Poisson process on $\R_{+}$ with rate $1 $ (all of which defined on the same probability space so that they are independent). For each $\la>0$, when the density of walkers is taken to be $\la$, we take $|\W_v^\la|:=M_v(\la)$, where $\W_v^\la$ denotes the the set of walkers whose initial position is $v$ (in the case of density $\la$).   Thus if $\la_1 < \la_2 $ then for all $v \in V $ we have that $\W_v^{\la_1} \subseteq \W_v^{\la_2}$. The assertion of the Proposition is already clear at this point. For the sake of completeness, we give additional details concerning the construction. 

We continue by constructing at each site $v$ an infinite collection of independent walks, where in practice, only $M_v(\la)$ of them shall be involved in the dynamics associated with the SN model with density $\la$.
For each $v \in V$ and $n \in \N $, let $\w_n^v=(\w_{n}^v(t))_{t \in \Z_+ }$ be a LSRW on $G$, started at $v$ (throughout we denote the law of such a walk by $\mathbb{P}_v$, where the holding probability is either clear from context or irrelevant). We take all the walks to be independent. Moreover, we take $\mathbf{W}:=(\w_n^v)_{n \in \N,v \in V} $ and $\mathbf{M}:=(M_v)_{v \in V} $ to be independent. We think of $\w_n^v $ as the walk performed by the $n$-th particle whose initial position is $v$.  

 We are now in the position to define $\ac_{t}^{\lambda}:=(V,E_{t,\lambda}) $.
Denote by  $Z_{i,j}^{u,v}(t) $ the indicator of the event that the $i$-th particle from $u$ met the $j$-th particle from $v$ by time $t$ (\textit{i.e.}, $Z_{i,j}^{u,v}(t):=\Ind{\w_{i}^u(s)=\w_{j}^v(s) \text{ for some }s \le t} $).  We want the last event to imply that $\{u,v
\} \in E_{t,\lambda}$ iff $i \le| \W_u^{\la}|=M_{u}(\lambda)  $ and $j \le |\W_v^{\la}|=M_{v}(\lambda)  $ (because we want the number of particles starting at each site which are involved in the dynamics to have a $\Pois(\la)$ distribution). Hence we define  $Q_{u,v}^{(\lambda)}(t)=\max\{Z_{i,j}^{u,v}(t)
:i \le | \W_u^{\la}|,j \le |\W_v^{\la}|\}$ (this is the indicator of the event that some $w \in \W_v^{\la}  $ met by time $t$ some $w' \in \W_u^{\la} $) and set $\{u,v
\} \in E_{t,\lambda}$ iff $Q_{u,v}^{(\lambda)}(t)=1$. \qed
\section{Appendix C: Proof of translation invariance and ergodicity}
\label{a:ergodicity}
Using the notation from \S\ref{sec: construction},
let $\mathbf{W}_{v}:=((\w_{i}^{v}(t))_{t \ge 0})_{i=1}^{|\W_{v}|}$
be the infinite walks that the walkers in $\W_v$ performed.
\begin{lemma}
\label{lem: approxbycylinder}
Let $G=(V,E)$ be an infinite connected  graph. Let  $\lambda
> 0$, $\epsilon >0$ and $t \in \Z_+ \cup \{\infty \} $.  Then for every $\mathcal{A} \in \F_{\mathrm{cylinder}}  $, there exist  a finite set $B=B(\mathcal{A},\epsilon,t) \subset V$ and $\mathcal{A}_{\epsilon}$  such that the event $\ac_{t}^{\lambda}(G)
\in \mathrm{graph}( \mathcal{A}_{\epsilon})  $ is in the $\sigma$-algebra generated by $(\mathbf{W}_{u}:u \in B)$ and \[ \Pr_{\lambda}[\ac_{t}^{\lambda}(G)
 \in \mathrm{graph}( \mathcal{A}_{\epsilon}
 \bigtriangleup \mathcal{A})  ] \le \epsilon.\]  
\end{lemma}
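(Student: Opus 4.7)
The plan is to reduce $\mathcal{A}$ to a finite-dimensional cylinder event by a standard measure-theoretic approximation, and then exploit the key locality property that each edge indicator of $\ac_t^\la(G)$ depends only on the walkers initially positioned at its two endpoints. Let $\mu_t^\la$ denote the pushforward of $\Pr_\la$ under $\omega \mapsto \ac_t^\la(G)(\omega)$, regarded as a probability measure on $(\{0,1\}^S,\F_{\mathrm{cylinder}})$ via the bijection $x \leftrightarrow (V,E(x))$. In this language the desired probability bound is exactly $\mu_t^\la(\mathcal{A} \bigtriangleup \mathcal{A}_\epsilon) \le \epsilon$.

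First I would observe that $\F_{\mathrm{cylinder}}$ is generated by the algebra $\mathcal{E}$ of \emph{finite-dimensional} cylinder events, i.e., events of the form $\{x \in \{0,1\}^S : (x(s))_{s \in S_0} \in F\}$ for some finite $S_0 \subset S$ and some $F \subseteq \{0,1\}^{S_0}$. The classical approximation theorem for a measure on the $\sigma$-algebra generated by an algebra (a consequence of Carath\'eodory's extension theorem, or alternatively of the monotone-class theorem) then furnishes some $\mathcal{A}_\epsilon \in \mathcal{E}$ with $\mu_t^\la(\mathcal{A} \bigtriangleup \mathcal{A}_\epsilon) < \epsilon$. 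Let $S_0$ be a finite coordinate set on which $\mathcal{A}_\epsilon$ depends, and set $B := \bigcup_{\{u,v\} \in S_0}\{u,v\}$, which is finite since $|S_0|<\infty$.

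It then remains to verify that $\{\ac_t^\la(G) \in \mathrm{graph}(\mathcal{A}_\epsilon)\}$ lies in $\sigma(\mathbf{W}_u : u \in B)$. For every $s=\{u,v\} \in S$ and every $t \in \Z_+ \cup \{\infty\}$, by definition of the meeting relation $\meett$, the indicator $\Ind{s \in E_t^\la}$ asks whether some walker in $\W_u$ and some walker in $\W_v$ occupy a common vertex at some common time $\le t$; this is determined entirely by the trajectories $\mathbf{W}_u$ and $\mathbf{W}_v$, and is therefore $\sigma(\mathbf{W}_u,\mathbf{W}_v)$-measurable. Since $\mathcal{A}_\epsilon$ depends only on the coordinates indexed by $S_0$, the event $\{\ac_t^\la \in \mathrm{graph}(\mathcal{A}_\epsilon)\}$ is a measurable function of the finitely many indicators $(\Ind{s \in E_t^\la})_{s \in S_0}$, hence lies in $\sigma(\mathbf{W}_u : u \in B)$, as required.

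No part of the argument presents a genuine obstacle; it is a routine combination of the approximation of a measurable event by an event in a generating algebra, together with the locality of $\meett$. The only point worth emphasizing is that this locality is special to the acquaintances graph $\ac_t^\la$, whose edges are defined by $\meett$: the coarser friendship relation $\simt$ would couple walkers originating at arbitrarily distant vertices and thus would not admit such a clean local measurability statement.
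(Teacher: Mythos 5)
Your proof is correct. The paper in fact omits the proof of this lemma entirely, remarking only that it ``follows via elementary measure theoretical considerations,'' and your argument --- approximating $\mathcal{A}$ in $\mu_t^{\la}$-measure by a finite-dimensional cylinder event via the standard algebra-approximation theorem, and then using that each edge indicator $\Ind{\{u,v\}\in E_t}$ is $\sigma(\mathbf{W}_u,\mathbf{W}_v)$-measurable by the locality of the meeting relation $\meett$ --- is exactly the routine argument the authors had in mind.
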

This follows via elementary measure theoretical considerations, and so we omit the proof.
\emph{Proof of Proposition \ref{prop: ergodicity}.}
We first establish translation invariance.  Let $\phi \in \mathrm{Aut}(G)$. We shall show that there exists a coupling of $\ac_{t}^{\lambda}(G) $ and $\phi(\ac_{t}^{\lambda}(G))$ (\textit{i.e.}, a probability space in which both are realized) such that $\ac_{t}^{\lambda}(G)=\phi(\ac_{t}^{\lambda}(G))$. This clearly implies the desired equality of the corresponding laws.

 Note that if $(\w(s))_{s \ge 0}$ has law $\mathbb{P}_v$ then $(\phi^{}(\w(s)))_{s\ge 0} $ has law $\mathbb{P}_{\phi^{}(v)}$. Recall the construction of the SN model from \S\ref{sec: construction} via  $(M_v,(
(\w_{n}^v(s))_{s \in \Z_+ })_{n \in \N})_{v \in V }$, where $(M_v)_{v \in V}$ are $\iid$ $\Pois(\la)$ and $(
(\w_{n}^v(s))_{s \in \Z_+ })_{n \in \N} $ are independent LSRWs started from $v$ (\textit{i.e.}, having law $\mathbb{P}_v$). Denote this realization of $\ac_t^{\la}(G) $ by $H:=(V,E(H))$. Now consider a different realization obtained by replacing for all $v \in V$ the walks $(
(\w_{n}^v(s))_{s \in \Z_+ })_{n \in \N}$ by $(
(\phi^{}(\w_{n}^{\phi^{-1}(v)}(s)))_{s \in \Z_+ })_{n \in \N}$ and replacing $M_v$ by $M_{\phi^{-1}(v)} $. Denote it  by $H':=(V,E(H'))$. Note that $\{u,v\} \in E(H')$ iff there is some $k \le M_{\phi^{-1}(u)} $,  $m \le M_{\phi^{-1}(v)} $ and $s \le t$ such that $\w_k^{\phi^{-1}(u)}(s)=\w_m^{\phi^{-1}(v)}(s) $. This occurs iff $\{\phi^{-1}(u),\phi^{-1}(v)\} \in E(H)$, or equivalently iff $\{u,v\} $ is an edge in $\phi(H)$. That is $H'=\phi(H)$.     

\medskip

We now prove ergodicity.
Let $\mathcal{A} \in \mathcal{I}$. Fix some $t \in
\Z_{+} \cup \{\infty
\}$. We seek to show that $\Pr_{\lambda}[\ac_{t}^{\la} \in \mathrm{graph} (\mathcal{A}) ] \in \{0,1\}$. Let $\epsilon>0$. By Lemma \ref{lem: approxbycylinder}, there exist a finite set $B_{\epsilon} \subset V$ and an event $\mathcal{A}_{\epsilon} $ such that $\{ \ac_{t}^{\lambda}(G)
\in {\mathrm{graph}}( \mathcal{A}_{\epsilon}) \} $  is in the
$\sigma$-algebra generated by $(\mathbf{W}_{b}(t))_{b \in B_{\epsilon}}$ and $\Pr_{\lambda}[ \ac_{t}^{\lambda}(G)
\in {\mathrm{graph}}( \mathcal{A}_{\epsilon}
 \bigtriangleup \mathcal{A})  ] \le \epsilon$. Let $r_{\epsilon}:= \max \{\mathrm{dist}(u,v):u,v \in B_{\epsilon} \}$. Let $\phi_{\epsilon} \in \mathrm{Aut}(G)$ be such that $\mathrm{dist}(v,\phi_{\epsilon}(v))>2r_{\epsilon}$ for all $v \in V$.

The event $\{ \ac_{t}^{\lambda}(G)
\in {\mathrm{graph}}( \phi_{\epsilon}( \mathcal{A}_{\epsilon}
 ))\}  $ is
in the
$\sigma$-algebra generated by $(\mathbf{W}_{\phi_{\epsilon}(b)}(t))_{b \in B_{\epsilon}}$.  By our choice of $\phi_{\epsilon}$, the sets $B_{\epsilon}$ and $\{\phi_{\epsilon}(b):b \in B_{\epsilon} \} $ are disjoint. Hence the events $\{ \ac_{t}^{\lambda}(G)\in {\mathrm{graph}}( \phi_{\epsilon}( \mathcal{A}_{\epsilon}
 ))\} $ and $\{ \ac_{t}^{\lambda}(G)\in {\mathrm{graph}}( \mathcal{A}_{\epsilon}
) \} $ are independent, as they depend on disjoint sets of walkers. By translation invariance and the fact that $\mathcal{A} \in \mathcal{I}$ (and so $\phi_{\epsilon}( \mathcal{A}_{\epsilon})
 \bigtriangleup \mathcal{A}=\phi_{\epsilon}( \mathcal{A}_{\epsilon}
 \bigtriangleup \mathcal{A})$)  \[\Pr_{\lambda}[ \ac_{t}^{\lambda}(G)
\in {\mathrm{graph}}( \phi_{\epsilon}( \mathcal{A}_{\epsilon})
 \bigtriangleup \mathcal{A})  ]=\Pr_{\lambda}[ \ac_{t}^{\lambda}(G)
\in {\mathrm{graph}}( \mathcal{A}_{\epsilon}
 \bigtriangleup \mathcal{A})  ] \le \epsilon. \] Hence $\Pr_{\lambda}[ \ac_{t}^{\lambda}(G) \in {\mathrm{graph(}}( \mathcal{\mathcal{A}_{\epsilon} \cap \phi_{\epsilon}(\mathcal{A}_{\epsilon}
 ))\bigtriangleup \mathcal{A}) }] \le 2 \epsilon $ and thus
\begin{equation*}
\begin{split}
&\Pr_{\lambda}[ \ac_{t}^{\lambda}(G) \in {\mathrm{graph}}( \mathcal{A})]=\lim_{\epsilon \to 0}\Pr_{\lambda}[ \ac_{t}^{\lambda}(G) \in {\mathrm{graph}}( \mathcal{\mathcal{A}_{\epsilon} \cap \phi_{\epsilon}(\mathcal{A}_{\epsilon}
 )) }] \\ &= \lim_{\epsilon \to 0}\Pr_{\lambda}[ \ac_{t}^{\lambda}(G)\in {\mathrm{graph}}(\mathcal{A}_{\epsilon})]\Pr_{\lambda}[ \ac_{t}^{\lambda}(G)\in {\mathrm{graph}}(\mathcal{\phi_{\epsilon}(A}_{\epsilon}))] \\& =\lim_{\epsilon \to 0}\Pr_{\lambda}[ \ac_{t}^{\lambda}(G)
\in {\mathrm{graph}}( \mathcal{A}_{\epsilon})]^{2} =\Pr_{\lambda}[ \ac_{t}^{\lambda}(G) \in {\mathrm{graph}}( \mathcal{A})]^{2}.
\end{split}
\end{equation*}
Thus indeed $\Pr_{\lambda}[ \ac_{t}^{\lambda}(G) \in  {\mathrm{graph}}(  \mathcal{A})] \in \{0,1\}$, as desired. \qed

\section{Proof of Proposition \ref{prop: dominationbybrw}}
\label{a:p8.2}
We denote the walk performed by a walker $w$ by $(\mathbf{w}(t))_{t \ge 0} $. Recall that  $\W_v(t)$ is the set of walkers whose location at time $t$ is $v $ and that for $B \subseteq V $ and $t \ge 0$,  $\W_{B}(t):=\cup_{b \in B}\W_b(t)$ is the set of walkers occupying $B$ at time $t$. We denote the lexicographic order by $\prec $. Our use of the lexicographic order below is just a mean of preforming the bookkeeping in a manner which avoids double-counting (so that each walker is recruited to the exploration process at most once). It plays no additional role in the argument.

\medskip

{\em Proof of Proposition 5.2:}
At stage zero, we start the exploration process of $\fc(o)$ by setting $\W_{0,0}:=\W_o
$ and $\AAA_{0,0}=\{v \} $. We label the walkers in $\W_{0,0}$ as $w_{0,0,1},\ldots,w_{0,0,|\W_{0,0}|}$. 

 If $\W_o$ is empty the exploration process is completed. Otherwise, at stage one we set \[\AAA_{1,1}:=\{\mathbf{w}(1):w \in \W_{0,0} \}  \quad \text{and} \quad 
\W_{1,1}:=\{w \in \W_{\AAA_{1,1}}(1):w \notin \W_{0,0}  \}\] to be the collection of walkers not belonging to $\W_{0,0}
$, which have the same position at time 1 as some walker in $\W_{0,0}$. We say that $w \in \W_{1,1}$ is an offspring of $w_{0,0,j}$ if $\mathbf{w}(1)=\w_{0,0,j}(1) $ and $j$ is the minimal integer such that this holds. Finally, we label the elements of $\W_{1,1} $ as $w_{1,1,1},\ldots,w_{1,1,|\W_{1,1}|}$.

The first ``interesting" stage of the process is stage 2, thus we describe it before proceeding to the description of a general stage. Let \[\AAA_{2,0}:=\{\mathbf{w}(0):w \in \W_{1,1} \} \quad \text{and} \quad  \AAA_{2,2}:=\{\mathbf{w}(2):w \in \W_{0,0} \cup \W_{1,1} \} .\] We set \[\W_{2,0}:=\{w \in \W_{\AAA_{2,0}}(0):w \notin \W_{1,1}    \} \quad \text{and} \quad \W_{2,2}:=\{w \in \W_{\AAA_{2,2}}(2):w \notin  \cup_{(i,j)\prec (2,2)} \W_{i,j}  \}.\]
In words, $\W_{i,j}$ is the set of walkers recruited to the process at stage $i$ of the exploration process, by meeting at time $j$ some walker which was  recruited to the exploration process at an earlier stage (not necessarily an earlier time). These are the walkers which at time $j $ visit the set $\AAA_{i,j}$ but for all $(i',j') \prec (i,j) $ avoided $\AAA_{i',j'}$ at time $j'$. Once a walker is recruited to the exploration process by joining $\W_{i,j}$ at stage $i$, we then expose at each stage $i+ \ell $ (where $\ell \in \N$) its location at time $j+\ell $ and if $\ell \le j$ we also expose its position at time $j-\ell$. 

In particular, for every $t$, for some values of $s  $ (namely, for $s \le t $ such that $t-s$ is even) we expose at the $t$-th stage of the exploration process
the location at time $s$ of some particles which have been recruited to the exploration process prior to stage $t$ (namely of the ones in $\W_{i,j}$ for $(i,j)$ such that either $j+(t-i)=s$ or $j-(t-i)=s$).  We denote the collection of these locations by \[\AAA_{t,s}:=\cup_{(i,j): \, j+(t-i)=s \text{ or }j-(t-i)=s } \{\w(s):w \in \W_{i,j} \}.\] Finally, we let $\W_{t,s}$ be the collection of walkers in $\W_{\AAA_{t,s}}(s)$ (\textit{i.e.}, the ones occupying $\AAA_{t,s}$ at time $s$) which do not belong to $\W_{t',s'}$ for any $(t',s') \prec (t,s) $.

 The parent of $w \in \W_{2,0}$ (respectively, $\W_{2,2}$) is defined to be $w_{1,1,k} \in \W_{1,1} $ (respectively, $w_{i,j,k} \in \W_{0,0} \cup \W_{1,1} $)  such that $\w(0)=\w_{1,1,k}(0) $ (respectively, $\w(2)=\w_{i,j,k}(2) $) and $(1,1,k)$ (respectively, $(i,j,k)$) is minimal w.r.t.~$\prec$.  Finally, for $(i,j)\in \{(2,0),(2,2)\}$ we label the walkers in $\W_{i,j}$ as $w_{i,j,1},\ldots,w_{i,j,|\W_{i,j}|}$.

The sets $\AAA_{r,s}$ and $\W_{r,s}=\{ w_{r,s,1},\ldots,w_{r,s,|\W_{r,s}|} \} $ (where $0 \le s \le r $ is of the same parity as $r$) are defined inductively so that the following holds:
\begin{itemize}
\item[(1)] $\AAA_{r,s}:=\{\w(s):w \in \cup_{(i,j)\in F_{r,s} \cup B_{r,s}} \W_{i,j} \} $, where \[F_{r,s}:=\{(i,j):(i,j) \prec (r,s) \text{ and } r-i=s-j>0  \} \quad \text{and} \] \[ B_{r,s}:=\{(i,j): j-s=r-i>0 \}.\] 
(in simple words, as described above, $\AAA_{r,s}$ are the positions explored by the exploration process at stage $r$ corresponding to time $s$ of some walkers. This walkers were  recruited at an earlier stage, either at an earlier time or at a latter time.  If they were recruited at stage $i$ and time $j$ then by construction in the first case $(i,j) \in F_{r,s} $, while in the second case  $(i,j) \in B_{r,s} $.)
\item[(2)] $\W_{r,s}:=\{w \in \W_{\AAA_{r,s}}(s): w \notin  \cup_{(i,j)\prec (r,s)} \W_{i,j}   \}$. Note that this is the set of walkers which joined the exploration process at stage $r$ and time $s$.
\end{itemize}
It follows that \[\cup_{(r,s):\, r \ge s,\, r-s \text{ is even}}\W_{r,s}=\fc(o) .\]

We now describe the assignment of offspring to walkers.
In the $r$-th stage we expose the sets  $\AAA_{r,s} $ (where $0 \le s \le r $ is of the same parity as $r$) sequentially according to the order $\prec$.  
We expose each $\AAA_{r,s} $ by exposing sequentially the positions of the walkers in $\cup_{(i,j)\in F_{r,s} \cup B_{r,s}} \W_{i,j}$ one walker at a time, according to the order $\prec$ (over the indices of the walkers $(i,j,k)$ such that $(i,j)\in  F_{r,s} \cup B_{r,s}  $ and $1 \le k \le |\W_{i,j}| $). We say that $w \in \W_{r,s} $ is an offspring of $w_{i,j,k}$ (where $(i,j)\in  F_{r,s} \cup B_{r,s}  $ and $1 \le k \le |\W_{i,j}| $) if $\w(s)=\w_{i,j,k}(s) $ but for all  $(i',j',k') \prec (i,j,k) $ such that $(i',j')\in  F_{r,s} \cup B_{r,s}  $ and $1 \le k' \le |\W_{i',j'}| $ we have that $\w(s) \neq \w_{i',j',k'}(s) $. Moreover, as $w \notin  \cup_{(i',j')\prec (r,s)} \W_{i',j'}$ (by the definition of $\W_{r,s}$ and the assumption that $w \in \W_{r,s}$), we also have that $\w(\ell) \notin \AAA_{n,\ell}$ for all $0 \le \ell \le n \le r $ (where $n-\ell$ is even) such that $(n,\ell) \prec (r,s) $. If $s > j$ (respectively, $j>s $) we say that $w $ is a \emph{forward} (respectively, \emph{backward}) offspring of $w_{i,j,k}$. Let $\BB_{i,j,k}(r-i)$ and $\F_{i,j,k}(r-i)$ be the backward and forward (resp.) offspring of $w_{i,j,k}$ at stage $r$. Denote by $B_{i,j,k}(r-i) $ and $F_{i,j,k}(r-i)$ the collection of space-time coordinates which (as described above) a walker in  $\BB_{i,j,k}(r-i)$ and $\F_{i,j,k}(r-i)$ (respectively) has to avoid, in order to have not been recruited to the exploration process prior to the exposure of   $\BB_{i,j,k}(r-i)$ or $\F_{i,j,k}(r-i)$, respectively  (namely, these are the space-time coordinates exposed prior to the exposure of $\BB_{i,j,k}(r-i) $ and $\F_{i,j,k}(r-i)$, respectively).  

We think of a walker $w_{i,j,k}$ as performing a forward walk, $\mathbf{fw}_{i,j,k}(\ell):=\w(j+\ell) $ and a backward walk (of length $j$) $\mathbf{bw}_{i,j,k}(\ell)=\w(j-\ell) $. At each stage $r \ge i$ we expose one additional step of $\mathbf{fw}_{i,j,k} $ (namely, $\mathbf{fw}_{i,j,k}(r-i)=\w(j+(r-i))$) and if $j \ge r-i$ also one additional step of $\mathbf{bw}_{i,j,k} $ (namely, $\mathbf{bw}_{i,j,k}(r-i)=\w(j-(r-i))$). Note that the forward (respectively, backward) offspring of $w_{i,j,k}$ at stage $r$ are precisely the collection of all walkers $w$ whose location at time $j+r-i$ (respectively, $j-r+i$) is $\w_{i,j,k}(j+r-i)$ (respectively, $\w_{i,j,k}(j-r+i) $) so that $(\w(\ell),\ell) \notin F_{i,j,k}(r-i)  $  (respectively, $\notin B_{i,j,k}(r-i) $) for all $0 \le \ell \le r$.

Recall that $\Gamma_{r}$ is the collection of all walks of length $r$ in $G$ and that for $\gamma \in \Gamma_r$, we denote the number of walkers which performed the walk $\gamma$ by $X_{\gamma} \sim \Pois(\la p(\gamma))$, where $p(\gamma):=\prod_{i=0}^{r-1}P(\gamma_i,\gamma_{i+1})$ .  

 Let $\Gamma_{i,j,k,r,\mathrm{f}}$ (respectively, $\Gamma_{i,j,k,r,\mathrm{b}}$) be the collection of all  $\gamma=(\gamma_0,\ldots,\gamma_r) \in \Gamma_r$ such that~$(\gamma_\ell,\ell) \notin F_{i,j,k}(r-i) $ for all $\ell$ and $\gamma_{j+r-i}=\w_{i,j,k}(j+r-i) $ (respectively, $(\gamma_\ell,\ell) \notin B_{i,j,k}(r-i) $ for all $\ell$ and $\gamma_{j-r+i}=\w_{i,j,k}(j-r+i) $).  By Poisson thinning, given $F_{i,j,k}(r-i)$ and $\w_{i,j,k}(j+r-i) $ (respectively, $B_{i,j,k}(r-i)$ and $\w_{i,j,k}(j-r+i)$), $(X_{\gamma})_{\gamma \in \Gamma_{i,j,k,r,\mathrm{f}}}$ (respectively, $(X_{\gamma})_{\gamma \in \Gamma_{i,j,k,r,\mathrm{b}}} $) are independent Poisson r.v.'s with mean $\la p(\gamma) $, respectively.

Now, consider the case that after exposing $\w_{i,j,k}(j+r-i)$ (respectively, $\w_{i,j,k}(j-r+i)$), for each \[\gamma \in \{ \gamma' \in \Gamma_r : \gamma_{j+r-i}'=\w_{i,j,k}(j+r-i) \} \setminus \Gamma_{i,j,k,r,\mathrm{f}}  \] \[\text{(respectively,} \quad \gamma \in\{ \gamma' \in \Gamma_r : \gamma_{j-r+i}'=\w_{i,j,k}(j-r+i) \} \setminus \Gamma_{i,j,k,r,\mathrm{b}}   \text{)} \] we ``plant" $\la p(\gamma) $ new ``dummy particles" (independently for different such $\gamma$'s) which perform the path $\gamma$, and then continue their walk after time $r$ randomly. The dummy particles do not discover new walkers in the following stages of the exploration process (\textit{i.e.},they do not have any offspring and the trajectory of their walk plays no role in the following stages). If we count the dummy particles as part of the  offspring of $w_{i,j,k}$ corresponding to its forward step at stage $r$, then we have that $(X_{\gamma}^{i,j,k,r,\mathrm{f}} )_{\gamma \in \Gamma_r : \gamma_{j-r+i}=\w_{i,j,k}(j-r+i)}$ are independent Poisson r.v.'s and $\mathbb{E}_{\la}[X_{\gamma}^{i,j,k,r,\mathrm{f}} ]=\la p(\gamma) $ for all $\gamma \in\{ \gamma' \in \Gamma_r : \gamma_{j-r+i}'=\w_{i,j,k}(j-r+i) \}$, where $X_{\gamma}^{i,j,k,r,\mathrm{f}}$ is the  number of offspring of $w_{i,j,k}$ corresponding to its forward step at stage $r$ who perform the walk $\gamma$. By Poisson thinning, this is the same as having $\Pois(\la)$ offspring, each performing an independent $\Z_+$-indexed LSRW on $G$, conditioned to be at $\w_{i,j,k}(j+r-i)$ at time $j+r-i$. A similar statement holds for the  number of offspring of $w_{i,j,k}$ corresponding to its backwards step at stage $r$.

Recall the construction of the sets $\mathcal{V}_{r,s}=\{v_{r,s,1},\ldots,v_{r,s,|\mathcal{V}_{r,s}|} \}$ from the equivalent representation of $\brw$. It is not hard to prove that the sets $\W_{r,s}$ and $\mathcal{V}_{r,s} $ can be coupled (for all $0 \le s \le r$ so that $r-s$ is even) so that $\W_{r,s} \subseteq \mathcal{V}_{r,s} $. More  precisely, this can be done so that for all $k \le |\W_{r,s}|$ we have that  $\mathbf{v}_{r,s,k}(t)=\w_{r,s,k}(t)$ for all $t \ge 0$. We leave the details to the reader.  \qed

\end{document}